\newtheorem{thm}{Theorem}[section]
\newtheorem{prop}[thm]{Proposition}
\newtheorem{cor}[thm]{Corollary}
\newtheorem{lem}[thm]{Lemma}
\theoremstyle{definition}
\newtheorem{dfn}[thm]{Definition}
\newtheorem{example}[thm]{Example}
\newtheorem{rmk}[thm]{Remark}
\numberwithin{equation}{section}
\newcommand{\cK}{\mathcal{K}}
\newcommand{\cM}{\mathcal{M}}
\newcommand{\cH}{\mathcal{H}}
\newcommand{\CB}{\mathcal{CB}}
\newcommand{\id}{\textrm{id}}
\newcommand{\cB}{\mathcal{B}}
\newcommand{\minotimes}{\otimes_{{\rm min}}}
\newcommand{\cN}{\mathcal{N}}
\newcommand{\boldu}{\mathbf{u}}
\newcommand{\bA}{\mathsf{A}}
\newcommand{\boldg}{{\bf g}}
\newcommand{\boldh}{{\bf h}}
\newcommand{\boldv}{{\bf v}}
\newcommand{\boldw}{{\bf w}}
\newcommand{\boldx}{{\bf x}}
\newcommand{\boldy}{{\bf y}}
\newcommand{\bolda}{{\bf a}}
\newcommand{\boldb}{{\bf b}}
\newcommand{\Star}{\textsf{Star}}
\newcommand{\cQ}{\mathcal{Q}}
\newcommand{\cE}{\mathcal{E}}
\newcommand{\cP}{\mathcal{P}}
\newcommand{\Nor}{\mathsf{Nor}}
\newcommand{\cA}{\mathcal{A}}
\newcommand{\Id}{{\rm Id}}
\newcommand{\Cliq}{\textsf{Cliq}}
\newcommand{\hotimes}{\otimes_h}
\newcommand{\cL}{\mathcal{L}}
\newcommand{\sG}{\mathsf{G}}
\newcommand{\Link}{\textsf{Link}}
\newcommand{\Comm}{\textsf{Comm}}
\newcommand{\oast}{\star}
\title[Absence of Cartan subalgebras for  Hecke von Neumann algebras]{Absence of Cartan subalgebras for right-angled Hecke von Neumann algebras}
\date{\today}
\author{Martijn Caspers}
\begin{document}

\maketitle

\begin{abstract}
For a  right-angled Coxeter system $(W,S)$ and $q>0$, let $\cM_q$ be the associated Hecke von Neumann algebra, which is generated by self-adjoint operators $T_s, s \in S$ satisfying the Hecke relation $(\sqrt{q}\: T_s - q) (\sqrt{q} \: T_s + 1) = 0$ as well as suitable commutation relations. Under the assumption that $(W, S)$ is irreducible and $\vert S \vert \geq 3$ it was proved by Garncarek \cite{Garncarek} that $\cM_q$ is a factor (of type II$_1$) for a range $q \in [\rho, \rho^{-1}]$ and otherwise $\cM_q$ is the direct sum of a II$_1$-factor   and $\mathbb{C}$.

In this paper we prove (under the same natural  conditions as Garncarek) that $\cM_q$ is non-injective, that it has the weak-$\ast$ completely contractive approximation property and that it has the Haagerup property. In the hyperbolic factorial case $\cM_q$ is a strongly solid algebra and  consequently  $\cM_q$ cannot have a Cartan subalgebra. In the general case $\cM_q$ need not be strongly solid. However, we give examples of non-hyperbolic right-angled Coxeter groups such that $\cM_q$  does not possess a Cartan subalgebra.
\end{abstract}

\section{Introduction}
Hecke algebras are one-parameter deformations of group algebras of a Coxeter group. They  were the fundament for the theory of quantum groups \cite{Jimbo}, \cite{Kassel} and have remarkable applications in the theory of knot invariants \cite{JonesKnot} as was shown by V. Jones. A wide range of applications of Coxeter groups and their Hecke deformations  can be found in \cite{Davis}. In \cite{Dymara}  (see also \cite[Section 19]{Davis}) Dymara introduced the von Neumann algebras generated by Hecke algebras. Many important results were then obtained (see also \cite{DymaraEtAl}) for these Hecke von Neumann algebras. This gave for example insight in the cohomology of associated buildings and its Betti numbers. In this paper we investigate the approximation properties of Hecke von Neumann algebras as well as their Cartan subalgebras (here we mean the notion of a Cartan subalgebra in the von Neumann algebraic sense which we recall in Section \ref{Sect=StrongSolidity} and not the Lie algebraic notion).

Let us recall the following definition.  Let $q>0$ and let $W$ be a right-angled Coxeter group with generating set $S$  (see Section \ref{Sect=Prelim}). The associated Hecke algebra is a $\ast$-algebra generated by $T_s, s \in S$ which satisfies the relation:
\[
(\sqrt{q}\: T_s - q) (\sqrt{q} \: T_s + 1) = 0, \qquad T_s^\ast = T_s \qquad  \textrm{ and } \qquad T_s T_t = T_t T_s,
\]
for $s, t\in S$ with  $st = ts$. Hecke algebras carry a canonical faithful tracial vector state (the vacuum state) and therefore generate a von Neumann algebra $\cM_q$ under its GNS construction. It was recently proved by Garncarek \cite{Garncarek}  that if $(W,S)$ is irreducible (see Section \ref{Sect=Prelim}) and $\vert S \vert \geq 3$, the von Neumann algebra $\cM_q$ is a factor in case $q \in [\rho, \rho^{-1}]$ where $\rho$ is the radius of convergence of the fundamental power series \eqref{Eqn=FundamentalSeries}.
If $q \not \in [\rho, \rho^{-1}]$ then $\cM_q$ is the direct sum of a II$_1$ factor and $\mathbb{C}$. For more general coxeter groups/Hecke algebras (not necessarily being right angled, or for multi-parameters $q$) this result is unknown. It deserves to be emphasized that this in particular shows that the isomorphism class of $\cM_q$ depends on $q$; an observation that was already made in the final remarks \cite[Section 19]{Davis}.

The first aim of this paper is to determine approximation properties of $\cM_q$ (assuming the same natural conditions as Garncarek). We first show that $\cM_q$ is a non-injective von Neumann algebra and therefore falls outside Connes' classification of hyperfinite factors \cite{ConnesClassification}. Secondly we show that $\cM_q$ has the weak-$\ast$ completely contractive approximation property (wk-$\ast$ CCAP). This means that there exists a net of completely contractive finite rank maps on $\cM_q$ that converges to the identity in the point $\sigma$-weak topology. In case $q = 1$ the algebra $\cM_q$ is the group von Neumann algebra of a right-angled Coxeter group. In this case the result was known. For instance the CCAP follows from Reckwerdt's result \cite{Reckwerdt} and non-injectivity follows easily from identifying a copy of the free group inside $W$. Non-injectivity can also be proved for right-angled Coxeter groups through the techniques developped in \cite{BozejkoSpeicher}.  Here we find the following:


\vspace{0.3cm}

\noindent {\bf Theorem A.} Let $q > 0$.
\begin{enumerate}
\item Let $(W,S)$ be an irreducible right-angled Coxeter system with $\vert S \vert \geq 3$.  Then $\cM_q$ is non-injective.
\item For a general right-angled Coxeter system $(W,S)$ the associated Hecke von Neumann algebra $\cM_q$  has the wk-$\ast$ CCAP  and the Haagerup property.
\end{enumerate}

The proofs of non-injectivity and the Haagerup property proceed by showing that Hecke von Neumann algebras are actually graph products \cite{CaspersFima} and then using general graph/free product techniques involving important results of Ueda \cite{Ueda}.
 For the wk-$\ast$- CCAP    we first obtain cb-estimates for radial multipliers and then use   estimates of word length projections (see Proposition \ref{Prop=CutDown}) going back to Haagerup \cite{HaagerupExample}.

\vspace{0.3cm}

Our second aim is the study of Cartan subalgebras of the Hecke von Neumann algebra $\cM_q$. Recall that a Cartan subalgebra of a II$_1$-factor is by definition a maximal abelian subalgebra whose normalizer generates the II$_1$-factor itself. Cartan subalgebras arise typically in crossed products of free ergodic probability measure preserving actions of discrete groups on a probability measure space.


In \cite{Voiculescu} Voiculescu was the first one to find factors (namely free group factors) that do not have a Cartan subalgebra. His proof relies on estimates for the free entropy dimension of the normalizer of an injective von Neumann algebra.
 Using a different approach   Ozawa and Popa \cite{OzawaPopaII} were also able to find classes of von Neumann algebras that do not have a Cartan subalgebra (including the free group factors). Ozawa and Popa actually proved that these algebras have a stronger property that afterwards became known as strong solidity: the normalizer of a diffuse injective von Neumann subalgebra generates an injective von Neumann algebra again. 
 
 After these fundamental results by Ozawa--Popa strong solidity was studied for many other von Neumann algebras. In particular in \cite{PopaVaesCrelle} Popa and Vaes (see also Chifan-Sinclair \cite{ChifanSinclair}) proved absence of Cartan subalgebras for group factors of bi-exact groups that have the CBAP. Isono \cite{IsonoExample} then put the results from \cite{PopaVaesCrelle} into a general von Neumann framework in order to prove absence of Cartan subalgebras for free orthogonal quantum groups. Isono proved that factors with the wk-$\ast$ CBAP  that satisfy condition (AO)$^+$ are strongly solid.  Using this strong solidity result by Isono we are able to prove the following.

\vspace{0.3cm}

 \noindent {\bf Theorem B.} Let $q \in [\rho, \rho^{-1}]$ with $\rho$ as in Theorem \ref{Thm=Factor}. Let $(W,S)$ be an irreducible right-angled Coxeter system with $\vert S \vert \geq 3$. Assume that $W$ is hyperbolic. Then the associated Hecke von Neumann algebra $\cM_q$ is strongly solid.

\vspace{0.3cm}

In turn as $\cM_q$ is non-injective by Theorem A we are able to derive the result announced in the title of this paper for the hyperbolic case.

\vspace{0.3cm}

 \noindent {\bf Corollary C.} Let $q \in [\rho, \rho^{-1}]$ with $\rho$ as in Theorem \ref{Thm=Factor}.  For an irreducible right-angled hyperbolic Coxeter system $(W,S)$ with $\vert S \vert \geq 3$ the associated Hecke von Neumann algebra $\cM_q$ does not have a Cartan subalgebra.

\vspace{0.3cm}

General right-angled Hecke von Neumann algebras are not strongly solid, see Remark \ref{Rmk=HyperbolicIsNecessary}. Still we can prove in some cases that they do not possess a Cartan subalgebra. We do this by showing that if $\cM_q$ were to have a Cartan subalgebra then under suitable conditions  each of the three alternatives in \cite[Theorem A]{VaesPrims} fails to be true, which leads to a contradiction.   

\vspace{0.3cm}

 \noindent {\bf Theorem D.} Let $q \in [\rho, \rho^{-1}]$.  Let $(W,S)$ be an  irreducible right-angled   Coxeter system with $\vert S \vert \geq 3$ for which the Coxeter graph satisfies the conditions of Theorem \ref{Thm=NoCartan}. Then the associated Hecke von Neumann algebra $\cM_q$ does not have a Cartan subalgebra.

\vspace{0.3cm}

\noindent {\it Structure.} In Section \ref{Sect=Prelim} we introduce Hecke von Neumann algebras and some basic algebraic properties. Lemma \ref{Lem=TExpansion} is   crucial for the results on strong solidity and the weak-$\ast$ CCAP. In Section \ref{Sect=Universal} we obtain universal properties of Hecke von Neumann algebras and prove that they decompose as graph products. We collect the consequences for Haagerup property and non-injectivity. 
  In Section \ref{Sect=Approximation} we find approximation properties of $\cM_q$ and conclude Theorem A.  Section \ref{Sect=StrongSolidity} proves the strong solidity result of Theorem B from which Corollary C shall easily follow. Finally Section \ref{Sect=Cartan} proves absence of Cartan subalgebras for the cases of Theorem D.

\vspace{0.3cm}

\noindent {\it Convention.} Let $X$ be a set and let $A,B \subseteq X$. We will briefly write $A \backslash B$ for $A \backslash (A \cap B)$.

\vspace{0.3cm}

\noindent {\bf Acknowledgements.} The author wishes to express his gratitude to the following people: Sergey Neshveyev, Lukasz Garncarek and Adam Skalski for enlightening discussions on Hecke-von Neumann algebras. The anonymous referees for several comments that led to significant improvements of the paper. 



\section{Notation and preliminaries}\label{Sect=Prelim}
Standard result on operator spaces can be found in \cite{EffrosRuan}, \cite{Pisier}. Standard references for von Neumann algebras are \cite{StratilaZsido} and \cite{Takesaki1}. Recall that {\it ucp} stands for unital completely positive.

\subsection{Coxeter groups} \label{Sect=Coxeter}
A {\it Coxeter group} $W$ is a group that is freely generated by a finite set $S$ subject to relations
\[
(s t)^{m(s,t)} = 1,
\]
for some constant $m(s,t) \in \{ 1, 2, \ldots, \infty\}$ with $m(s,t) = m(t,s) \geq 2, s\not = t$ and $m(s,s) = 1$. The constant $m(s,t)=\infty$ means that no relation is imposed, so that $s,t$ are free variables. The Coxeter group $W$ is called right-angled if either $m(s,t) = 2$ or $m(s,t) = \infty$ for all $s,t \in S, s\not = t$ and this is the only case we need in this paper. Therefore we assume from now on that $W$ is a right-angled Coxeter group with generating set $S$. The pair $(W,S)$ is also called a Coxeter system.

Let $\boldw \in W$ and suppose that $\boldw = w_1 \ldots w_n$  with $w_i \in S$. The representing expression $w_1 \ldots w_n$ is called reduced if whenever also $\boldw = w_1' \ldots w_m'$ with $w_i' \in S$ then $n \leq m$, i.e. the expression is of minimal length.  In that case we will write $\vert \boldw \vert = n$. Reduced expressions are not necessarily unique (only if $m(s,t) = \infty$ whenever $s \not = t$), but for each $\boldw \in W$ we may pick a reduced expression which we shall call minimal.

\vspace{0.3cm}

\noindent {\bf Convention:} For $\boldw \in W$ we shall write $w_i$ for the minimal representative $\boldw = w_1 \ldots w_n$.

\vspace{0.3cm}

To the pair $(W,S)$ we associate a graph $\Gamma$ with vertex set $V\Gamma = S$ and edge set $E\Gamma = \{ (s,t) \mid m(s,t) = 2 \}$. A subgraph $\Gamma_0$ of $\Gamma$ is called {\it full} if the following property holds: $\forall s,t \in V\Gamma_0$ with $(s,t) \in E\Gamma$ we have $(s,t) \in E\Gamma_0$. 

A clique in $\Gamma$ is a full subgraph in which every two vertices share an edge. We let $\Cliq(\Gamma)$ denote the set of cliques in $\Gamma$. To keep the notation consistent with the literature the empty graph is in $\Cliq(\Gamma)$ by convention (in this paper we shall sometimes exclude the empty graph from $\Cliq(\Gamma)$ explicitly or treat it as a special case to keep some of the arguments more transparent). 
 
	For $s \in S$ we set
	\[
	\Link(s) = \{ t \in S \mid m(s,t) = 2 \},
	\]
	so these are all vertices in $\Gamma$ that have distance exactly 1 to $s$. For a subset $X \subseteq V\Gamma$ we set $\Link(X) = \cap_{s \in X} \Link(s)$. We sometimes regard $\Link(X)$ as a full subgraph of $\Gamma$.

\begin{dfn}
A Coxeter system $(W,S)$ is called {\it irreducible} if the complement of $\Gamma$ is connected. Here the complement $\Gamma^c$ of the graph $\Gamma$ is the graph with the same vertex set $V\Gamma$ and for $v,w \in V\Gamma$ we have  $(v,w) \in E\Gamma^c$   if and only if $(v,w) \not \in E\Gamma$.
\end{dfn}



\subsection{Hecke von Neumann algebras}\label{Sect=SubHecke}
Let $(W,S)$ be a right-angled Coxeter system. Let $q > 0$. By \cite[Proposition 19.1.1]{Davis} there exists a unique unital $\ast$-algebra $\mathbb{C}_q(\Gamma)$ generated by a basis $\{ \widetilde{T}_\boldw \mid \boldw \in W\}$ satisfying the following relations. For every $s \in S$ and $\boldw \in W$  we have:
\[
\begin{split}
\widetilde{T}_s  \widetilde{T}_\boldw  = &
\left\{
\begin{array}{ll}
\widetilde{T}_{s\boldw} & \textrm{if } \vert s \boldw \vert > \vert \boldw \vert,\\
q \widetilde{T}_{s\boldw} + (1-q) \widetilde{T}_{\boldw} & \textrm{otherwise},\\
\end{array}
\right.\\
\widetilde{T}_\boldw^\ast   = & \widetilde{T}_{\boldw^{-1}}.
\end{split}
\]
We define normalized elements $T_{\boldw} = q^{-\vert \boldw \vert/2} \widetilde{T}_{\boldw}$. Then for $\boldw \in W$ and $s \in S$,
\begin{equation}\label{Eqn=Tmaps}
\begin{split}
T_s  T_\boldw  = &
\left\{
\begin{array}{ll}
T_{s\boldw} & \textrm{if } \vert s \boldw \vert > \vert \boldw \vert,\\
T_{s\boldw} + p T_{\boldw} & \textrm{otherwise},\\
\end{array}
\right.
\end{split}
\end{equation}
where
\[
p = \frac{q-1}{\sqrt{q}}.
\]
There is a natural positive linear tracial map $\tau$ on $\mathbb{C}_q(W)$ satisfying $\tau(T_{\boldw}) = 0, \boldw \not = 1$ and $\tau(1) = 1$. Let $L^2(\cM_q)$ be the Hilbert space given by the closure of $\mathbb{C}_q(W)$ with respect to $\langle x, y \rangle = \tau(y^\ast x)$ and let $\cM_q$ be the von Neumann algebra generated by $\mathbb{C}_q(W)$ acting on $L^2(\cM_q)$. $\tau$ extends to a state on $\cM_q$ and $L^2(\cM_q)$ is its GNS space with cyclic vector $\Omega := T_e$. $\cM_q$ is called the {\it Hecke von Neumann algebra} at parameter $q$ associated to the right-angled Coxeter system $(W, S)$.

\begin{thm}[see \cite{Garncarek})]\label{Thm=Factor}
Let $(W,S)$ be an irreducible  right-angled Coxeter system and suppose that $\vert S \vert \geq 3$. Let $\rho$ be the radius of convergence of the fundamental power series:
\begin{equation}\label{Eqn=FundamentalSeries}
\sum_{k=0}^\infty \vert \{ \boldw \in W \mid \vert \boldw \vert = k \}\vert z^{k}.
\end{equation}
For every $q \in [\rho, \rho^{-1}]$ the von Neumann algebra $\cM_q$ is a factor. For $q>0$ not in $ [\rho, \rho^{-1}]$  the von Neumann algebra $\cM_q$ is the direct sum of a factor and $\mathbb{C}$.
\end{thm}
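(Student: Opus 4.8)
The plan is to compute the center $Z(\cM_q) = \cM_q \cap \cM_q'$ directly and to locate the parameter regions where it is trivial. Since $\{T_\boldw \Omega\}_{\boldw \in W}$ is an orthonormal basis of $L^2(\cM_q)$ and $\Omega$ is separating, a central element $z$ is determined by the square-summable family $(c_\boldw)$ with $z\Omega = \sum_\boldw c_\boldw T_\boldw\Omega$, and centrality of $z$ is equivalent to $L_{T_s}(z\Omega) = R_{T_s}(z\Omega)$ for every $s \in S$, where $L$ and $R$ denote the left and right regular actions. Applying \eqref{Eqn=Tmaps} to both actions and comparing the coefficient of $T_\boldv$ on each side, I would reduce centrality to the system
\[
c_{\boldv s} + p\, c_\boldv \,\mathbf{1}[s \in D_R(\boldv)] \;=\; c_{s\boldv} + p\, c_\boldv\, \mathbf{1}[s \in D_L(\boldv)], \qquad s \in S,\ \boldv \in W,
\]
where $D_R(\boldv),D_L(\boldv)$ are the right and left descent sets and $p = (q-1)/\sqrt{q}$. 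The theorem then becomes the assertion that the space of $\ell^2$-solutions of this system is one-dimensional (spanned by $\boldw \mapsto \delta_{\boldw,e}$) exactly when $q \in [\rho,\rho^{-1}]$, and two-dimensional otherwise.

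The two extra solutions come from the Hecke relation $(\sqrt{q}\,T_s - q)(\sqrt{q}\,T_s + 1) = 0$, which shows each $T_s$ has spectrum $\{\sqrt{q},-1/\sqrt{q}\}$ and thus produces two characters $\chi_\pm$ of $\mathbb{C}_q(\Gamma)$ with $\chi_+(T_s)=\sqrt{q}$ and $\chi_-(T_s)=-1/\sqrt{q}$. Solving $T_s\eta = \lambda\eta$ for a common eigenvector forces $\eta_\pm = \sum_\boldw (\pm1)^{|\boldw|} q^{\pm|\boldw|/2}\,T_\boldw\Omega$, so that $\|\eta_\pm\|^2 = \sum_\boldw q^{\pm|\boldw|}$; by definition of $\rho$ this converges precisely when $q<\rho$ (for $\eta_+$), respectively $q>\rho^{-1}$ (for $\eta_-$), and diverges at the endpoint $q=\rho$ (resp. $q=\rho^{-1}$), since the growth series \eqref{Eqn=FundamentalSeries} has a genuine singularity at its radius. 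Hence outside $[\rho,\rho^{-1}]$ exactly one of $\eta_\pm$ lies in $L^2(\cM_q)$; the normalized state $\omega_{\eta_\pm}$ is then a normal character, whose support is a minimal central projection $z$ with $\cM_q z \cong \mathbb{C}$. Inside $[\rho,\rho^{-1}]$ neither eigenvector is square-summable, so these candidate central vectors are excluded.

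Once the $\mathbb{C}$-summand is split off, both assertions reduce to showing that the complementary summand has trivial center, i.e. that the centrality system above admits no nonscalar $\ell^2$-solution transverse to the $\eta_\pm$. Here the hypotheses enter: reducedness, i.e. connectedness of the complement of $\Gamma$, guarantees that $W$ is not a direct product $W_1\times W_2$ (equivalently $\Gamma$ is not a join), which rules out the tensor splitting $\cM_q = \cM_q^{(1)}\wot\cM_q^{(2)}$ that would enlarge the center, while $|S|\geq 3$ excludes the degenerate cases $|S|\le 2$ where $\cM_q$ is finite-dimensional or the (non-ICC) infinite-dihedral algebra.

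The technical heart, and the step I expect to be the main obstacle, is the proof that for $q\in[\rho,\rho^{-1}]$ the centrality relations force $(c_\boldw)$ to be scalar. These relations are \emph{harmonicity} conditions: when $s$ is neither a left nor a right descent of $\boldv$ they impose the symmetry $c_{\boldv s} = c_{s\boldv}$, while the descent cases couple the sphere of radius $|\boldv|-1$ to that of radius $|\boldv|+1$ through the weight $p$. Propagating these relations using the graph-product (free-product-over-$\Gamma$) structure of $W$ together with reducedness should force any solution to be \emph{radial}, $c_\boldw = f(|\boldw|)$, after which the recursion for $f$ is exactly the one solved by $\eta_\pm$ and the $\ell^2$-threshold is governed precisely by $\rho$, so that no nonscalar solution survives on the closed interval. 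Making the forced-radiality step rigorous — controlling the interaction of non-commuting generators and showing the off-radial coefficients vanish — is the delicate point, requiring the full strength of reducedness and careful bookkeeping of reduced words, and this is where the argument of \cite{Garncarek} concentrates its effort.
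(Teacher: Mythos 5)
This theorem is not proved in the paper at all: it is quoted from \cite{Garncarek}, so there is no in-paper argument to compare yours against. Judged on its own, your set-up is correct and is indeed the skeleton of Garncarek's approach: with $z\Omega=\sum_\boldw c_\boldw T_\boldw\Omega$, centrality is equivalent to the descent-set system you write down; the Hecke relation gives exactly the two characters $\chi_\pm$ with $\chi_\pm(T_s)\in\{\sqrt q,-1/\sqrt q\}$; the corresponding eigenvectors have $\Vert\eta_\pm\Vert^2=\sum_k\vert\{\boldw:\vert\boldw\vert=k\}\vert\,q^{\pm k}$, so their square-summability is governed by $\rho$; and a normal character does split off a central $\mathbb{C}$-summand. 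This much correctly locates the extra summand outside $[\rho,\rho^{-1}]$.

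The gap is that the entire content of the theorem is the step you defer to \cite{Garncarek}: that for $q\in[\rho,\rho^{-1}]$ the system has no nonscalar $\ell^2$-solution, and that outside this interval the center is exactly two-dimensional. Moreover the route you sketch for it --- ``the relations force radiality, then solve the radial recursion'' --- would fail. When $s$ is neither a left nor a right descent of $\boldv$ the relation gives only $c_{s\boldv}=c_{\boldv s}$, a cyclic-shift/conjugation invariance; it never relates $c_{s\boldv}$ to $c_{t\boldv}$ for distinct generators, so radiality is not a consequence of the relations. (At $q=1$ the system says precisely that $c$ is an $\ell^2$ class function, and class functions on $W$ are far from radial; triviality of the center there comes from the ICC property, i.e.\ from infinitude of conjugacy classes combined with square-summability.) For general $q$ one must combine the propagation relations such as $c_{s\boldv s}=c_{\boldv}+p\,c_{\boldv s}$ with the $\ell^2$ condition and a counting estimate tied to the growth rate $\rho^{-1}$; that is where all the work lies and it is absent. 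Two smaller but genuine issues: the endpoint cases $q=\rho,\rho^{-1}$ need the divergence of \eqref{Eqn=FundamentalSeries} \emph{at} its radius of convergence, which does not follow from having a singularity there (compare $\sum_n z^n/n^2$ at $z=1$) and requires, e.g., rationality of the Coxeter growth series; and your account of reducedness is off, since a tensor splitting of two factors is still a factor --- the actual danger is a join with a finite piece, e.g.\ $\Gamma$ the join of a single vertex with $\Gamma'$, giving $\cM_q\cong(\mathbb{C}\oplus\mathbb{C})\wot\cN\cong\cN\oplus\cN$, which is neither a factor nor of the form factor${}\oplus\mathbb{C}$.
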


As $\cM_q$ posesses a normal faithful tracial state the factors appearing in Theorem \ref{Thm=Factor} are of type II$_1$.

For the analysis of $\cM_q$ we shall in fact need $\cM_1$ which is the group von Neumann algebra of the Coxeter group $W$.  It can be represented on $L^2(\cM_q)$. Indeed, let $T_\boldw^{(1)}$ denote the generators of $\cM_1$ as in \eqref{Eqn=Tmaps} and let $T_\boldw$ be the generators of $\cM_q$. Set the unitary map\footnote{Unitarity follows as the vectors $T_\boldw \Omega$ are orthonormal. Indeed $\langle T_\boldw \Omega, T_\boldv \Omega \rangle = \langle T_\boldv^\ast T_\boldw \Omega,  \Omega \rangle$.
	If  $\boldv^\ast \boldw$ is reducible this expression is 0. Otherwise there exists a letter $w_1$ at the starts of $\boldv$ and $\boldw$ such that $T_\boldv^\ast T_\boldw = T_{\boldv'}^\ast T_{\boldw'} + p T_{\boldv'}^\ast T_{w_1} T_{\boldw'}$, where $w_1 \boldw' = \boldw$ and $w_1 \boldv' =\boldv$ and $\boldw'$ and $\boldv'$ are of shorter length. The term $p T_{\boldv'}^\ast T_{w_1} T_{\boldw'}$ reduces further and can be written as a sum of operators $\sum_i T_{\boldu_i}$ but each $\boldu_i$ must contain the letter $w_1$  as else $\boldw$ and $\boldv$ would not be reducible. Therefore $\langle p T_{\boldv'}^\ast T_{w_1} T_{\boldw'} \Omega, \Omega \rangle = 0$. So $\langle T_\boldv^\ast T_\boldw \Omega,  \Omega \rangle = \langle T_{\boldv'}^\ast T_{\boldw'} \Omega,  \Omega \rangle$. Continuing inductively we get $\langle T_\boldv^\ast T_\boldw \Omega,  \Omega \rangle = \delta_{\boldv, \boldw}$.}
,
\[
U: L^2(\cM_1) \rightarrow L^2(\cM_q): T_{\boldw}^{(1)} \Omega \rightarrow T_\boldw \Omega.
\]
In this paper we shall always assume that $\cM_1$ is represented on $L^2(\cM_q)$ by the identification $\cM_1 \rightarrow \cB(L^2(\cM_q)):  x \mapsto U x U^\ast$. Note that this way
\begin{equation}\label{Eqn=T1}
T_\boldv^{(1)} (T_\boldw \Omega) = T_{\boldv \boldw} \Omega.
\end{equation}
For $\boldw \in W$ we shall write $P_\boldw$ for the projection of $L^2(\cM_q)$ onto the closure of the space spanned linearly by $\{ T_\boldv \Omega \mid \vert \boldw^{-1} \boldv \vert = \vert \boldv \vert - \vert \boldw \vert  \}$ (see Remark \ref{Rmk=CreaAnni} below). For $\Gamma_0 \in \Cliq(\Gamma)$ we shall write $P_{V\Gamma_0}$ for $P_\boldw$ where $\boldw \in W$ is the product of all vertex elements of $\Gamma_0$ and $\vert V \Gamma_0 \vert$ for the number of elements in $V \Gamma_0$. Note that if $s,t \in V\Gamma_0$ then $P_s$ and $P_t$ commute and so $P_{V\Gamma_0}$ is well-defined.  Similarly we shall write $P_{\boldv V\Gamma_0}$ for $P_\boldw$ where $\boldw \in W$ is the product of $\boldv$ with all vertex elements of $\Gamma_0$.

\begin{rmk}[Creation and annihilation arguments]\label{Rmk=CreaAnni}
Note that for $\boldw, \boldv \in W$ saying that $\vert \boldw^{-1} \boldv \vert = \vert \boldv \vert - \vert \boldw \vert$ just means that the start of $\boldv$ contains the word $\boldw$. Throughout the paper we say that $s \in S$ acts by means of a creation operator on $\boldv \in W$ if $\vert s \boldv \vert = \vert \boldv \vert +1$. It acts as an annihilation operator if $\vert s \boldv \vert = \vert \boldv \vert - 1$. Note that as $W$ is right-angled we cannot have $\vert s \boldv \vert = \vert \boldv \vert$. For $\boldv, \boldw \in W$ we may always decompose $\boldw = \boldw' \boldw''$ such that $\vert \boldw \vert = \vert \boldw ' \vert + \vert \boldw '' \vert, \vert \boldw'' \boldv \vert = \vert \boldv \vert - \vert \boldw''\vert$ and $\vert \boldw \boldv \vert = \vert \boldv \vert - \vert \boldw '' \vert + \vert \boldw'\vert$. That is $\boldw$ first acts  by means of annihilations of the letters of $\boldw''$ and then $\boldw'$ acts as a creation operator on $\boldw'' \boldv$. We will use such arguments without further reference.
\end{rmk}

The following Lemma \ref{Lem=BreakDown}  together with Lemma  \ref{Lem=TExpansion} say that $T_\boldw$ decomposes in terms of a sum of operators that first act by annihilation (this is $T_{\boldu''}^{(1)}$) then a diagonal action (this is the projection $P_{\boldu V \Gamma_0}$) and finally by creation (this is $T_{\boldu'}^{(1)}$).  

\begin{dfn} \label{Dfn=Aw}
Let $\boldw \in W$.  Let $A_\boldw$ be the set of  triples $(\boldw', \Gamma_0, \boldw'')$ with $\boldw', \boldw'' \in W$ and $\Gamma_0 \in \Cliq(\Gamma)$ such that:  (1) $\boldw = \boldw' V\Gamma_0 \boldw''$, (2) $\vert \boldw \vert = \vert \boldw' \vert + \vert V\Gamma_0 \vert + \vert \boldw'' \vert$, (3)   if $s \in S$ commutes with $V\Gamma_0$ then $\vert \boldw's \vert > \vert \boldw'\vert$ (that is, letters commuting with $V\Gamma_0$ cannot occur at the end of $\boldw'$ but if they are there they should occur at the start of $\boldw''$ instead).
\end{dfn}

\begin{lem}\label{Lem=BreakDown}
For $(\boldw', \Gamma_0, \boldw'') \in A_\boldw$ there exist $\boldu, \boldu', \boldu'' \in W$
such that
\begin{equation}\label{Eqn=BreakDown}
T_{\boldw'}^{(1)} P_{V\Gamma_0} T_{\boldw''}^{(1)} =    T_{\boldu'}^{(1)}  P_{\boldu V \Gamma_0}   T_{\boldu''}^{(1)},
\end{equation}
and moreover if $s \in S$ is such that $\vert \boldu' s \vert < \vert \boldu' \vert$ then $\vert s \boldu'' \vert > \vert \boldu'' \vert$. We may assume that $\boldu' = \boldw' \boldu^{-1}$ and $\boldu'' = \boldu \boldw''$.
\end{lem}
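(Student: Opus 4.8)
Throughout, recall from \eqref{Eqn=T1} that each $T_{\boldv}^{(1)}$ is the left-translation unitary $T_{\boldv}^{(1)}(T_{\boldw}\Omega) = T_{\boldv\boldw}\Omega$, and that the vectors $T_{\boldw}\Omega$ form an orthonormal basis of $L^2(\cM_q)$, so that $P_{\boldw}$ is the diagonal projection onto $\overline{\spa}\{T_{\boldv}\Omega : |\boldw^{-1}\boldv| = |\boldv| - |\boldw|\}$. Write $\boldc := V\Gamma_0$ for the reduced product of the clique. Since $T_{\boldw'}^{(1)}$ and $T_{\boldw''}^{(1)}$ are unitaries and $\boldu' = \boldw'\boldu^{-1}$, $\boldu'' = \boldu\boldw''$, the identity \eqref{Eqn=BreakDown} is equivalent, after cancelling these unitaries on both sides, to the single conjugation relation $T_{\boldu}^{(1)} P_{\boldc}(T_{\boldu}^{(1)})^{\ast} = P_{\boldu\boldc}$. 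I would therefore first isolate a one-letter version: \emph{if $s\in S$ acts on $\boldm\in W$ by creation ($|s\boldm| = |\boldm|+1$) and does not commute with $\boldm$ (i.e.\ fails to commute with some generator occurring in $\boldm$), then} $T_{s}^{(1)} P_{\boldm} T_{s}^{(1)} = P_{s\boldm}$. This is immediate on basis vectors: $T_{s}^{(1)} P_{\boldm} T_{s}^{(1)}$ fixes $T_{\boldv}\Omega$ when $s\boldv$ starts with $\boldm$ and kills it otherwise, and under the two hypotheses $\{\boldv : \boldm \text{ is a prefix of } s\boldv\}$ is exactly $\{\boldv : s\boldm \text{ is a prefix of }\boldv\}$.

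The proof of \eqref{Eqn=BreakDown} then proceeds by an iteration that transfers letters shared by the end of $\boldw'$ and the start of $\boldw''$ into the projection. I would carry along a triple $(\bolda,\boldu,\boldb)$, started at $(\boldw', e, \boldw'')$, with the running identity $T_{\boldw'}^{(1)} P_{\boldc} T_{\boldw''}^{(1)} = T_{\bolda}^{(1)} P_{\boldu\boldc} T_{\boldb}^{(1)}$ and the invariants: (a) $\bolda = \boldw'\boldu^{-1}$ and $\boldb = \boldu\boldw''$ with $|\bolda| = |\boldw'| - |\boldu|$, $|\boldb| = |\boldw''| - |\boldu|$ and $\boldu\boldc$ reduced of length $|\boldu| + |\boldc|$; and (b) no right descent of $\bolda$ (no $s\in S$ with $|\bolda s| < |\bolda|$) commutes with $\boldu\boldc$. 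The reduction step is: whenever some $s\in S$ is simultaneously a right descent of $\bolda$ and a left descent of $\boldb$, write $\bolda = \bolda_0 s$ and $\boldb = s\boldb_0$, apply the one-letter lemma with $\boldm = \boldu\boldc$ to get $T_{\bolda}^{(1)} P_{\boldu\boldc} T_{\boldb}^{(1)} = T_{\bolda_0}^{(1)} P_{s(\boldu\boldc)} T_{\boldb_0}^{(1)}$, and replace $(\bolda,\boldu,\boldb)$ by $(\bolda s, \, s\boldu, \, s\boldb)$. Since $|\bolda| + |\boldb|$ drops by $2$ at each step the process terminates, and termination is exactly the assertion that no generator is at once a right descent of $\boldu' := \bolda$ and a left descent of $\boldu'' := \boldb$, which is the stated compatibility.

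The crux is that the one-letter lemma applies at every step and that invariant (b) survives. The creation hypothesis $|s(\boldu\boldc)| = |\boldu\boldc| + 1$ is free: by (a) and condition (2) of Definition \ref{Dfn=Aw} we have $\bolda\,(\boldu\boldc) = \boldw'\boldc$ with $|\bolda| + |\boldu\boldc| = |\boldw'| + |\boldc| = |\boldw'\boldc|$, so $\bolda(\boldu\boldc)$ is reduced, and since $\bolda = \bolda_0 s$ its suffix $s(\boldu\boldc)$ is reduced as well. The non-commutation hypothesis is precisely invariant (b). Initially (b) is hypothesis (3) of Definition \ref{Dfn=Aw} and (2) gives reducedness of $\boldw'\boldc$, so the invariants hold at the start. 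For their persistence, the only nontrivial point is (b): suppose $r\in S$ is a right descent of the new left word $\bolda_0$ and commutes with $s(\boldu\boldc)$; then $r$ commutes with $s$, so moving $r$ past $s$ in $\bolda = \bolda_0 s$ shows $r$ is also a right descent of $\bolda$; but then $r$ commutes with $\boldu\boldc$, contradicting (b) for the old triple. The remaining length identities and the reducedness of $(s\boldu)\boldc = s(\boldu\boldc)$ follow immediately from the creation property.

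The main obstacle is exactly the preservation of invariant (b): it is what guarantees that each transferred letter meets the growing projection word nontrivially (so that the one-letter lemma, and not a degenerate variant, applies), and its verification is the single place where the right-angled ``commute past'' manipulation, together with the descent hypothesis (3), is indispensable. Everything else is bookkeeping with word lengths.
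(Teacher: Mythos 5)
Your proof is correct, and it reaches the same triple $\boldu'=\boldw'\boldu^{-1}$, $\boldu''=\boldu\boldw''$ as the paper, but the route is genuinely different in execution. The paper defines $\boldu$ in one stroke as the element of maximal length that is simultaneously a suffix of $\boldw'$ and (inverted) a prefix of $\boldw''$, and then verifies \eqref{Eqn=BreakDown} by evaluating both sides directly on each basis vector $T_{\boldv}\Omega$, with the ``moreover'' clause dismissed as a consequence of maximality. You instead build $\boldu$ one letter at a time, resting everything on the one-letter conjugation identity $T_{s}^{(1)}P_{\boldm}T_{s}^{(1)}=P_{s\boldm}$ (valid when $s$ creates on $\boldm$ and fails to commute with some letter of $\boldm$) together with an invariant propagating the non-commutation hypothesis. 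What your version buys is transparency about where condition (3) of Definition \ref{Dfn=Aw} actually enters: it is exactly what prevents the degenerate case in which $s$ commutes with all of $\boldm$ (where $T_{s}^{(1)}P_{\boldm}T_{s}^{(1)}\neq P_{s\boldm}$, e.g.\ $T_{s}^{(1)}\cdot 1\cdot T_{s}^{(1)}=1\neq P_{s}$), a point the paper's direct computation leaves implicit; moreover the termination of your iteration delivers the compatibility of descents of $\boldu'$ and $\boldu''$ for free. The cost is length, and a small amount of unverified normal-form bookkeeping in the one-letter lemma (the identification of $\{\boldv:\boldm\leq s\boldv\}$ with $\{\boldv:s\boldm\leq\boldv\}$), which is at the same level of informality the paper itself licenses in Remark \ref{Rmk=CreaAnni}.
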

\begin{proof}
Let $\boldu \in W$ be the (unique) element of maximal length such that $\vert \boldw'  \boldu^{-1} \vert = \vert \boldw' \vert - \vert \boldu \vert$ and $\vert \boldu \boldw'' \vert = \vert \boldw'' \vert - \vert \boldu \vert$. Set $\boldu' = \boldw' \boldu^{-1}$ and $\boldu'' = \boldu \boldw''$. It then remains to prove \eqref{Eqn=BreakDown} as the rest of the properties are obvious or follow by maximality of $\boldu$. We must show that,
\[
P_{\boldu V \Gamma_0} = T_{\boldu}^{(1)} P_{ V \Gamma_0} T_{\boldu^{-1}}^{(1)}. 
\]
Take $\boldv \in W$. If $\vert (\boldu V\Gamma_0)^{-1} \boldv \vert = \vert \boldv \vert - \vert \boldu V\Gamma_0 \vert$ (i.e. $\boldv$ starts with $\boldu V\Gamma_0$) then $\vert (V\Gamma_0)^{-1} \boldu^{-1} \boldv \vert = \vert \boldu^{-1} \boldv \vert - \vert  V\Gamma_0 \vert$ (i.e. $\boldu^{-1} \boldv$ starts with $V\Gamma_0$). We shall prove that the converse holds. First, we claim that if  $\vert (V\Gamma_0)^{-1} \boldu^{-1} \boldv \vert = \vert \boldu^{-1} \boldv \vert - \vert  V\Gamma_0 \vert$ then $\vert \boldu^{-1} \boldv \vert = \vert  \boldv \vert - \vert  \boldu^{-1} \vert$ (i.e. $\boldv$ starts with $\boldu$). Indeed,  because if this would not be the case then one of the letters in $\boldu$ would remain at the start of $\boldu^{-1} \boldv$. And as  the letters of $\boldu$ do not commute with $V\Gamma_0$ this would mean that $\vert (V\Gamma_0)^{-1} \boldu^{-1} \boldv \vert \not = \vert \boldu^{-1} \boldv \vert - \vert  V\Gamma_0 \vert$, which is a contradiction. From the initial assumption  $\vert (V\Gamma_0)^{-1} \boldu^{-1} \boldv \vert = \vert \boldu^{-1} \boldv \vert - \vert  V\Gamma_0 \vert$ ($\boldu^{-1} \boldv$ starts with $V\Gamma_0$) together with $\vert \boldu^{-1} \boldv \vert = \vert  \boldv \vert - \vert  \boldu^{-1} \vert$ ($\boldv$ starts with $\boldu$) we get that $\vert (\boldu V\Gamma_0)^{-1} \boldv \vert = \vert \boldv \vert - \vert \boldu V\Gamma_0 \vert$. 

The previous paragraph shows the first equality of
\[
P_{\boldu V \Gamma_0} (T_\boldv \Omega) =
 T_{\boldu}^{(1)} P_{ V \Gamma_0} (T_{\boldu^{-1}\boldv} \Omega)=
 T_{\boldu}^{(1)} P_{ V \Gamma_0} T_{\boldu^{-1}}^{(1)}(T_\boldv \Omega). 
\]

\end{proof}
\begin{rmk}\label{Rmk=concatenationreduced}
In Lemma \ref{Lem=BreakDown} the property that $\vert \boldu' s \vert < \vert \boldu' \vert$ implies that $\vert s \boldu''   \vert > \vert \boldu'' \vert$ is equivalent to $\vert \boldu' \boldu'' \vert = \vert \boldu' \vert + \vert \boldu'' \vert$. The words $\boldu'$ and $\boldu''$ in Lemma \ref{Lem=BreakDown} are not unique: in case $\vert s \boldu'' \vert = \vert \boldu'' \vert - 1$ and $s$ commutes with $V\Gamma_0$ then we may replace $(\boldu', \boldu'')$ by  $(\boldu's, s \boldu'')$.
\end{rmk}

\begin{lem}\label{Lem=TExpansion}
We have,
\begin{equation}\label{Eqn=TExpansion}
T_\boldw =   \sum_{(\boldw', \Gamma_0, \boldw'') \in A_\boldw} p^{\vert V \Gamma_0 \vert} T_{\boldw'}^{(1)} P_{V\Gamma_0} T_{\boldw''}^{(1)},
\end{equation}
where $A_\boldw$ is given in Definition \ref{Dfn=Aw}.
\end{lem}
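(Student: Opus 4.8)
The plan is to establish \eqref{Eqn=TExpansion} by induction on $\vert \boldw \vert$, reducing the operator identity to its action on the orthonormal basis $\{T_\boldv \Omega \mid \boldv \in W\}$ of $L^2(\cM_q)$. For the base case, $T_e = 1$ matches the unique admissible triple $(e,\emptyset,e)$, and for $\boldw = s \in S$ the normalized multiplication rule \eqref{Eqn=Tmaps} gives $T_s(T_\boldv\Omega) = T_{s\boldv}\Omega$ when $\vert s\boldv\vert = \vert\boldv\vert + 1$ and $T_s(T_\boldv\Omega) = T_{s\boldv}\Omega + p\,T_\boldv\Omega$ otherwise, the correction appearing exactly when $\boldv$ starts with $s$. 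Since $T_s^{(1)}$ acts by $T_\boldv\Omega \mapsto T_{s\boldv}\Omega$ and $P_s$ is the projection onto $\spa\{T_\boldv\Omega \mid \boldv \text{ starts with } s\}$, this is precisely $T_s = T_s^{(1)} + p\,P_s$, which is the $A_s$-sum once condition (3) of Definition \ref{Dfn=Aw} is used to discard the redundant triple $(s,\emptyset,e)$ in favour of $(e,\emptyset,s)$.

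For the inductive step I would write $\boldw = s\boldw_0$ with $\vert\boldw\vert = \vert\boldw_0\vert+1$, so that $T_\boldw = T_s T_{\boldw_0}$ by \eqref{Eqn=Tmaps}; substituting the base case $T_s = T_s^{(1)} + pP_s$ and the inductive formula for $T_{\boldw_0}$ splits $T_\boldw$ into $\Sigma_1 + \Sigma_2$, where $\Sigma_1$ collects the terms $p^{\#V\Gamma_0} T_{s\boldx'}^{(1)} P_{V\Gamma_0} T_{\boldx''}^{(1)}$ coming from $T_s^{(1)}$ and $\Sigma_2$ the terms $p^{\#V\Gamma_0 + 1} P_s T_{\boldx'}^{(1)} P_{V\Gamma_0} T_{\boldx''}^{(1)}$ coming from $pP_s$, the sums running over $(\boldx',\Gamma_0,\boldx'') \in A_{\boldw_0}$. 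A key preliminary observation is that, because $\vert s\boldw_0\vert = \vert\boldw_0\vert+1$, the letter $s$ never annihilates into the prefix $\boldx'$ (which is an initial segment of $\boldw_0$ by condition (2)); hence $T_s^{(1)} T_{\boldx'}^{(1)} = T_{s\boldx'}^{(1)}$ with $\vert s\boldx'\vert = \vert\boldx'\vert+1$, and no length-lowering defect terms ever arise. This is what makes the reorganization a clean bijection rather than a cancellation.

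The heart of the argument is then to match $\Sigma_1 + \Sigma_2$ with $\sum_{A_\boldw}$ by sorting the triples of $A_\boldw$ according to the position of the distinguished first letter $s$ in the factorization $\boldw = \boldw' V\Gamma_0 \boldw''$. I would use two facts: first, the projection identity $P_s P_{V\Gamma_0} = P_{\{s\}\cup V\Gamma_0}$ when $\{s\}\cup V\Gamma_0 \in \Cliq(\Gamma)$ and $P_s P_{V\Gamma_0} = 0$ otherwise (a reduced word cannot start simultaneously with $s$ and with a non-commuting generator of $V\Gamma_0$); second, Lemma \ref{Lem=BreakDown} together with Remark \ref{Rmk=concatenationreduced}, which normalize each $T_{s\boldx'}^{(1)} P_{V\Gamma_0} T_{\boldx''}^{(1)}$ to a triple satisfying condition (3), pushing any letter that commutes with $V\Gamma_0$ off the end of the creation word and into $\boldw''$. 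The $\Sigma_1$-terms should then account for exactly those triples of $A_\boldw$ in which $s$ belongs to the creation data (either at the start of $\boldw'$, or, when $s$ commutes with $V\Gamma_0$, pushed to the start of $\boldw''$), with clique unchanged and weight $p^{\#V\Gamma_0}$; the $\Sigma_2$-terms, via the clique-growth identity, should account for exactly those triples in which $s$ is absorbed into the clique, enlarging $\Gamma_0$ to $\{s\}\cup V\Gamma_0$ and supplying the extra factor $p$. The distinct powers of $p$ guarantee that these two families are disjoint and jointly exhaustive.

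The step I expect to be the main obstacle is this last bookkeeping: verifying that the position of $s$ really does partition $A_{s\boldw_0}$ into the two families above, that the normalization of condition (3) is compatible with prepending $s$ in every case (in particular when $s$ commutes with part of $\boldw'$ or with $V\Gamma_0$, and in the degenerate case $s\in V\Gamma_0$), and that each admissible triple is produced exactly once with the correct coefficient. To keep this rigorous I would carry it out on basis vectors, using the explicit action $T_{\boldw'}^{(1)} P_{V\Gamma_0} T_{\boldw''}^{(1)}(T_\boldv\Omega) = T_{\boldw'\boldw''\boldv}\Omega$ when $\boldw''\boldv$ starts with $V\Gamma_0$ and $0$ otherwise, as read off from the proof of Lemma \ref{Lem=BreakDown}. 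This reduces the whole operator identity to a finite, checkable combinatorial statement in $W$ about which letters of $\boldw$ meet the start of $\boldv$.
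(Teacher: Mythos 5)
Your proposal follows essentially the same route as the paper: induction on $\vert \boldw \vert$ via $T_s = T_s^{(1)} + p P_s$, splitting the resulting expansion into the terms where $s$ enters the creation data and those where it is absorbed into the clique via $P_s P_{V\Gamma_0}$, and settling the identification of triples by evaluating on basis vectors --- which is exactly how the paper disposes of the delicate case $s\boldw' \neq \boldw' s$. One small correction: disjointness and exhaustiveness of the two families follows from where the distinguished letter $s$ sits in the factorization $\boldw = \boldw' V\Gamma_0 \boldw''$, not from the powers of $p$ --- both families contribute the same coefficient $p^{\# V\Lambda}$ relative to the resulting clique $\Lambda$.
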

\begin{proof}
The proof proceeds by induction on the length of $\boldw$. If $\vert \boldw \vert = 1$ then $T_\boldw = T_\boldw^{(1)} + p P_\boldw$ by \eqref{Eqn=Tmaps}. Now suppose that \eqref{Eqn=TExpansion} holds for all  $\boldw \in W$ with $\vert \boldw \vert = n$. Let $\boldv \in W$ be such that $\vert \boldv \vert = n+1$. Decompose $\boldv = s \boldw, \vert \boldw \vert = n, s \in S$. Then,
\begin{equation}\label{Eqn=PsTv}
\begin{split}
T_\boldv  = & T_s T_\boldw \\
= & \left( T_s^{(1)} + p P_s \right) \left(   \sum_{(\boldw', \Gamma_0, \boldw'') \in A_\boldw} p^{\vert V \Gamma_0 \vert} T_{\boldw'}^{(1)} P_{V\Gamma_0} T_{\boldw''}^{(1)}  \right) \\
= &
 \sum_{(\boldw', \Gamma_0, \boldw'')\in A_\boldw } \!\!\!\!\!\! \left( p^{\vert V\Gamma_0 \vert} T_{s \boldw'}^{(1)} P_{V\Gamma_0} T_{\boldw''}^{(1)} \right.
   \left. +   p^{\vert V\Gamma_0 \vert + 1} P_s T_{\boldw'}^{(1)} P_{V \Gamma_0} T_{\boldw''}^{(1)} \right).
\end{split}
\end{equation}
\noindent Now we need to make the following observations.

\begin{enumerate}
\item If $s \boldw' = \boldw' s$ then $P_s T_{\boldw'}^{(1)} = T_{\boldw'}^{(1)} P_s$. So in that case,
\[
 P_s T_{\boldw'}^{(1)} P_{V \Gamma_0} T_{\boldw''}^{(1)} = T_{\boldw'}^{(1)} P_s P_{V\Gamma_0} T_{\boldw''}^{(1)}.
\]
 Moreover $P_s P_{V\Gamma_0}$ equals $P_{s V\Gamma_0}$ in case $s$ commutes with all elements of $V\Gamma_0$ and it equals 0 otherwise.
\item In case $s \boldw' \not = \boldw' s$ we claim that $P_s T_{\boldw'}^{(1)} P_{V \Gamma_0} T_{\boldw''}^{(1)} = 0$. To see this, rewrite $P_s T_{\boldw'}^{(1)} P_{V \Gamma_0} T_{\boldw''}^{(1)} = P_s T_{\boldu'}^{(1)}  P_{\boldu V \Gamma_0}   T_{\boldu''}^{(1)}$ with $\boldu, \boldu', \boldu''$ as in Lemma \ref{Lem=BreakDown}.  As $s \boldw' \not = \boldw' s$ we have   $s \boldu' \not = \boldu' s$ and/or $s \boldu \not = \boldu s$ (because $\boldw' = \boldu' \boldu$ with $\vert \boldw' \vert = \vert \boldu' \vert + \vert \boldu \vert$, c.f. Lemma \ref{Lem=BreakDown}).

\begin{enumerate}
\item Assume   $s \boldu' \not = \boldu' s$. For $\boldv \in W$ with $T_{\boldu'' \boldv} \Omega$ in the range of $P_{\boldu V \Gamma_0}$, \begin{equation}\label{Eqn=Step}
P_s T_{\boldu'}^{(1)}   P_{\boldu V \Gamma_0}   T_{\boldu''}^{(1)} (T_{\boldv} \Omega ) = P_s T_{ \boldu' \boldu'' \boldv} \Omega.
\end{equation}
 Furthermore, the assertions of Lemma \ref{Lem=BreakDown} imply  $\vert \boldu' \boldu V\Gamma_0 \vert =  \vert \boldu' \vert + \vert \boldu V\Gamma_0\vert$ and therefore (recalling that $T_{\boldu'' \boldv} \Omega$ is in the range of $P_{\boldu V \Gamma_0}$) we get that $\vert \boldu' \boldu'' \boldv \vert = \vert \boldu'' \boldv \vert + \vert \boldu'\vert$ which implies  (because $s \boldu' \not = \boldu' s$ and $\boldu' \boldu'' \boldv$ starts with all letters of $\boldu'$) that \eqref{Eqn=Step} is 0. For $\boldv \in W$ with $T_{\boldu'' \boldv} \Omega$ not in the range of $P_{\boldu V \Gamma_0}$ we have $T_{\boldu'}^{(1)}   P_{\boldu V \Gamma_0}   T_{\boldu''}^{(1)} (T_{\boldv} \Omega ) = 0$. In all we conclude $P_s T_{\boldu'}^{(1)}   P_{\boldu V \Gamma_0}   T_{\boldu''}^{(1)} = 0$.
\item  Assume   $s\boldu' = \boldu's$ but $s \boldu \not = \boldu s$. Then $P_s T_{\boldu'}^{(1)} P_\boldu = T_{\boldu'}^{(1)} P_s P_\boldu = 0$.
\end{enumerate}
\end{enumerate}
\noindent So in all \eqref{Eqn=PsTv} gives,
\[
\begin{split}
T_\boldv  = & 
\sum_{(\boldw', \Gamma_0, \boldw'')\in A_\boldw } \!\!\!\!\!\!   p^{\vert V\Gamma_0 \vert} T_{s \boldw'}^{(1)} P_{V\Gamma_0} T_{\boldw''}^{(1)}   \\
&     +  \!\!\!\!\!\! \sum_{(\boldw', \Gamma_0, \boldw'')\in A_\boldw, s \boldw' = \boldw' s, s V\Gamma_0 = V\Gamma_0 s  } \!\!\!\!\!\!   p^{\vert V\Gamma_0 \vert + 1}  T_{\boldw'}^{(1)} P_{sV \Gamma_0} T_{\boldw''}^{(1)},
\end{split}
\]
and in turn an identification of all summands shows that the latter expression equals,
\[
 \sum_{(\boldv', \Gamma_0, \boldv'') \in A_{s \boldw}} p^{\vert V\Gamma_0 \vert} T_{\boldv'}^{(1)} P_{V\Gamma_0} T_{\boldv''}^{(1)}.
\]
This concludes the proof.
\end{proof}

\subsection{Group von Neumann algebras} Let $\sG$ be a discrete group with left regular representation $s \mapsto \lambda_s$ and group von Neumann algebra $\cL(\sG) = \{ \lambda_s \mid s \in \sG \} ''$. We let $A(\sG)$ be the Fourier algebra consisting of functions $\varphi(s) = \langle \lambda_s \xi, \eta \rangle, \xi, \eta \in \ell^2(\sG)$. There is a pairing between $A(\sG)$ and $\cL(\sG)$ which is given by $\langle \varphi, \lambda(f) \rangle = \int_\sG f(s) \varphi(s) ds$ which turns $A(\sG)$ into an operator space that is completely isometrically identified  with $\cL(\sG)_\ast$. We let $M_\CB A(\sG)$ be the space of completely bounded Fourier multipliers of $A(\sG)$. For $m \in M_\CB A(\sG)$ we let $T_m: \cL(\sG) \rightarrow \cL(\sG)$ be the normal completely bounded map determined by $\lambda(f) \mapsto \lambda(mf)$. The following theorem is due to Bozejko and Fendler \cite{BozejkoFendler} (see also \cite[Theorem 4.5]{JungeNeufangRuan}).
\begin{thm}\label{Thm=BozejkoFendler}
Let $m \in M_{\CB}A(\sG)$. There exists a unique normal completely bounded map $M_m: \cB(\ell^2(\sG)) \rightarrow \cB(\ell^2(\sG))$ that is an $L^\infty(\sG)$-bimodule homomorphism and such that $M_m$ restricts to $T_m: \lambda(f) \mapsto \lambda(mf)$ on $\cL(\sG)$. Moreover, $\Vert M_m \Vert_{\CB} = \Vert T_m \Vert_{\CB} = \Vert m \Vert_{M_{\CB}A(\sG)}$.
\end{thm}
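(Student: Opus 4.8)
The plan is to reduce the entire statement to the theory of (measurable) Schur multipliers on $\cB(L^2(\sG))$, where $L^2(\sG)$ is regarded as a continuous analogue of $\ell^2$ with the maximal abelian subalgebra $L^\infty(\sG)$ of multiplication operators playing the role of the diagonal. The first step is to invoke the Gilbert--Bozejko--Fendler representation of completely bounded multipliers: for $m \in M_\CB A(\sG)$ there is a Hilbert space $\cK$ and bounded weak-$\ast$-measurable functions $\xi, \eta \colon \sG \to \cK$ with $\sup_s \Vert \xi(s)\Vert \, \sup_t \Vert \eta(t)\Vert = \Vert m \Vert_{M_\CB A(\sG)}$ and
\begin{equation*}
m(s t^{-1}) = \langle \xi(s), \eta(t) \rangle, \qquad s, t \in \sG.
\end{equation*}
This identity is the precise bridge to Schur multipliers: the kernel $\phi(s,t) := m(s t^{-1})$ on $\sG \times \sG$ is then exactly of Grothendieck/Gilbert form, hence is a Schur multiplier of $\cB(L^2(\sG))$ with Schur norm at most $\Vert m \Vert_{M_\CB A(\sG)}$.

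Next I would write down $M_m$ explicitly from this data. Choosing an orthonormal basis $(e_i)_i$ of $\cK$ and expanding $\xi(s) = \sum_i \xi_i(s) e_i$, $\eta(t) = \sum_i \eta_i(t) e_i$, I set
\begin{equation*}
M_m(x) = \sum_i M_{\xi_i} \, x \, M_{\overline{\eta_i}}, \qquad x \in \cB(L^2(\sG)),
\end{equation*}
where $M_g$ denotes multiplication by $g \in L^\infty(\sG)$ on $L^2(\sG)$ and the sum converges $\sigma$-weakly, its partial sums being uniformly bounded by $\Vert \xi \Vert_\infty \Vert \eta \Vert_\infty$ via the row/column estimate $\Vert \sum_i M_{\xi_i} x M_{\overline{\eta_i}} \Vert \le \Vert M_{\sum_i |\xi_i|^2} \Vert^{1/2} \Vert x \Vert \Vert M_{\sum_i |\eta_i|^2} \Vert^{1/2}$. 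This formula makes all required properties transparent: $M_m$ is normal and completely bounded with $\Vert M_m \Vert_{\CB} \le \Vert \xi \Vert_\infty \Vert \eta \Vert_\infty = \Vert m \Vert_{M_\CB A(\sG)}$; since each $M_{\xi_i}, M_{\overline{\eta_i}}$ lies in $L^\infty(\sG)$ and multiplication operators commute, $M_m$ is an $L^\infty(\sG)$-bimodule homomorphism; and on the level of integral kernels $M_m$ simply multiplies the kernel of $x$ by $\phi(s,t) = m(st^{-1})$.

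It then remains to identify the restriction to $\cL(\sG)$ and to pin down the norms. Writing $\lambda(f)$ as the kernel operator with kernel $(s,t) \mapsto f(st^{-1})$, the previous paragraph gives that $M_m(\lambda(f))$ has kernel $m(st^{-1}) f(st^{-1}) = (mf)(st^{-1})$, so that $M_m(\lambda(f)) = \lambda(mf) = T_m(\lambda(f))$; thus $M_m$ restricts to $T_m$. Consequently $\Vert T_m \Vert_{\CB} \le \Vert M_m \Vert_{\CB} \le \Vert m \Vert_{M_\CB A(\sG)}$, and since $\Vert m \Vert_{M_\CB A(\sG)} = \Vert T_m \Vert_{\CB}$ holds by definition of the cb-multiplier norm on $A(\sG) \cong \cL(\sG)_\ast$, all three quantities coincide. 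For uniqueness, any normal $L^\infty(\sG)$-bimodule map on $\cB(L^2(\sG))$ is itself a Schur multiplier by the characterization of normal bimodule maps over a MASA (Haagerup, Spronk--Turowska), hence is determined by its symbol; the requirement that it agree with $T_m$ on $\cL(\sG)$ forces the symbol to equal $m(st^{-1})$, whence $M_m$ is unique.

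The main obstacle I expect is not the bookkeeping above but the two structural inputs it rests on in the locally compact (non-discrete, possibly non-unimodular) setting: first, the Gilbert--Bozejko--Fendler representation itself, whose proof is the genuine content and requires passing between the Fourier-algebra picture and the Schur picture with adequate control of measurability of the $\cK$-valued functions; and second, the identification of normal $L^\infty(\sG)$-bimodule maps with measurable Schur multipliers, which for a diffuse MASA is considerably more delicate than the elementary diagonal computation valid for $\ell^2$. The modular-function corrections entering the kernel description of $\lambda(f)$ must also be tracked, although, depending only on one variable, they are absorbed into an $L^\infty(\sG)$ multiplication operator and do not affect the final kernel identity.
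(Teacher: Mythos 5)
The paper does not prove this statement at all: it is quoted as a known theorem of Bozejko--Fendler, with \cite[Theorem 4.5]{JungeNeufangRuan} cited for the form used here. Your argument is, in substance, exactly the proof found in those references: the Gilbert--Bozejko--Fendler representation $m(st^{-1})=\langle \xi(s),\eta(t)\rangle$ with $\Vert\xi\Vert_\infty\Vert\eta\Vert_\infty=\Vert m\Vert_{M_{\CB}A(\sG)}$, followed by the elementary Schur-multiplier construction $x\mapsto\sum_i M_{\xi_i}xM_{\overline{\eta_i}}$, is how existence, the bimodule property, normality and the norm equalities are obtained, so the proposal is correct and takes the same route as the literature proof the paper points to. The only caveats worth recording: (1) essentially all of the analytic content sits in the representation theorem you invoke as a black box (you say so yourself, and that is legitimate since it predates and implies the statement); (2) for uniqueness it is cleaner to avoid the classification of measurable Schur multipliers altogether and instead note that ${\rm span}\{M_f\lambda(g)\mid f\in L^\infty(\sG),\ g\in\sG\}$ is $\sigma$-weakly dense in $\cB(L^2(\sG))$ (Stone--von Neumann, $L^\infty(\sG)\rtimes\sG\simeq\cB(L^2(\sG))$), so a normal $L^\infty(\sG)$-bimodule map is already determined by its restriction to $\cL(\sG)$; (3) your remark that the modular corrections in the kernel of $\lambda(f)$ are ``absorbed into an $L^\infty(\sG)$ multiplication operator'' is slightly off for non-unimodular $\sG$ (where $\Delta^{-1}$ is unbounded), but the correction is common to the kernels of $\lambda(f)$ and $\lambda(mf)$ and cancels, so the conclusion stands.
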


The map $M_m$ is called the Herz-Schur multiplier.

\section{Universal property and conditional expectations}\label{Sect=Universal}

In this section we establish  universal properties for $\cM_q$ and consequently show that $\cM_q$ is non-injective and has the Haagerup property. 

\subsection{Universal properties} 

\begin{thm}\label{Thm=Universal}
Let $q>0$ put $p = (q-1)/\sqrt{q}$ and let  $(W, S)$ be a right-angled Coxeter system with associated Hecke von Neumann algebra $(\cM_q, \tau)$. Suppose that $(\cN,\tau_\cN)$ is a von Neumann algebra with GNS faithful state $\tau_\cN$ that is generated by self-adjoint operators $R_s, s \in S$ that satisfy the relations $R_s R_t = R_t R_s$ whenever $m(s,t) = 2$, $R_s^2 = 1 + p R_s, s \in S$ and further $\tau_\cN( R_{w_1} \ldots R_{w_n}) = 0$ for every non-empty reduced word $\boldw = w_1 \ldots w_n \in W$. Then there exists a unique normal $\ast$-homomorphism $\pi: \cM_q \rightarrow \cN$ such that $\pi(T_s) = R_s$. Moreover $\tau_\cN \circ \pi = \tau$.
\end{thm}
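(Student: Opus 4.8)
The plan is to build the homomorphism first on the dense Hecke $\ast$-algebra $\mathbb{C}_q(\Gamma)$ and then extend it to $\cM_q$ through the GNS isometry that trace preservation provides. First I would define, for a reduced word $\boldw = w_1 \cdots w_n \in W$, the element
\[
R_\boldw := R_{w_1} \cdots R_{w_n} \in \cN, \qquad R_e := 1 .
\]
The point that needs care is that this is independent of the chosen reduced expression. For a right-angled Coxeter system any two reduced expressions of $\boldw$ differ by a finite sequence of transpositions of adjacent commuting generators (Matsumoto/Tits), and since the hypothesis gives $R_s R_t = R_t R_s$ exactly when $m(s,t) = 2$, the product is unaffected by these moves. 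Thus $\boldw \mapsto R_\boldw$ is well defined, and $R_\boldw^\ast = R_{w_n}\cdots R_{w_1} = R_{\boldw^{-1}}$ because each $R_{w_i}$ is self-adjoint. I would then let $\pi_0 : \mathbb{C}_q(\Gamma) \to \cN$ be the linear extension of $T_\boldw \mapsto R_\boldw$, which is legitimate since $\{T_\boldw\}_{\boldw \in W}$ is a basis.

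Second, I would verify multiplicativity, for which it suffices to reproduce the relations \eqref{Eqn=Tmaps} with $R$ in place of $T$. If $\vert s\boldw \vert > \vert \boldw \vert$ then $s w_1 \cdots w_n$ is reduced, so $R_s R_\boldw = R_{s\boldw}$. If $\vert s\boldw \vert < \vert \boldw \vert$ then the exchange condition yields a reduced expression $\boldw = s\boldw'$ with $s\boldw = \boldw'$, so $R_\boldw = R_s R_{\boldw'}$ and, using $R_s^2 = 1 + p R_s$,
\[
R_s R_\boldw = R_s^2 R_{\boldw'} = (1 + p R_s) R_{\boldw'} = R_{s\boldw} + p R_\boldw .
\]
Since $\mathbb{C}_q(\Gamma)$ is generated by $\{T_s\}$, an induction on word length then upgrades $\pi_0(T_s x) = R_s \pi_0(x)$ to full multiplicativity, and with the $\ast$-identity above $\pi_0$ is a unital $\ast$-homomorphism with $\pi_0(T_s) = R_s$.

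Third, trace preservation on $\mathbb{C}_q(\Gamma)$ is immediate: $\tau_\cN(\pi_0(T_\boldw)) = \tau_\cN(R_{w_1}\cdots R_{w_n})$ vanishes for $\boldw \neq e$ by the stated hypothesis and equals $1 = \tau(T_e)$ for $\boldw = e$, so $\tau_\cN \circ \pi_0 = \tau$. This identity makes
\[
V : L^2(\cM_q) \to L^2(\cN), \qquad V(x\Omega) = \pi_0(x)\Omega_\cN ,
\]
a well-defined isometry intertwining left and right multiplication by $\mathbb{C}_q(\Gamma)$ with those by $\pi_0(\mathbb{C}_q(\Gamma))$. For $a \in \cM_q$ I would then define $\pi(a) \in \cN$ to be the $\sigma$-weak limit of $\pi_0(a_i)$ along a bounded net $a_i \in \mathbb{C}_q(\Gamma)$ with $a_i \to a$ $\sigma$-strongly$^\ast$ (Kaplansky density); equivalently $\pi(a)$ is the element of $\cN$ determined by $\pi(a)\Omega_\cN = V a \Omega$. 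Normality, the $\ast$-property and multiplicativity pass to the limit, one has $\tau_\cN(\pi(a)) = \la V a\Omega, V\Omega \ra = \tau(a)$, and uniqueness holds because any normal $\ast$-homomorphism sending $T_s$ to $R_s$ must agree with $\pi_0$ on the $\sigma$-weakly dense subalgebra $\mathbb{C}_q(\Gamma)$.

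I expect the main obstacle to be the extension step rather than the algebraic bookkeeping: one must be sure the limit operator genuinely lies in $\cN$ and is independent of the approximating net, not merely a compression into some corner. The cleanest justification is that $\Omega_\cN$ is separating for $\cN$ (as $\tau_\cN$ is faithful), so an element of $\cN$ is pinned down by its action on $\Omega_\cN$; concretely the range projection $VV^\ast$ commutes with the right action generated by $\pi_0(\mathbb{C}_q(\Gamma))$, hence lies in $\cN$, which keeps the whole construction inside $\cN$. The well-definedness of $R_\boldw$, while routine, is the other place where right-angledness is genuinely used.
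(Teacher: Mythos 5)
Your proof is correct and follows essentially the same route as the paper: both hinge on the isometry $V: T_\boldw\Omega \mapsto R_\boldw\Omega_\cN$ and the conjugation $\pi = V(\,\cdot\,)V^\ast$. The only cosmetic difference is that you derive the isometry from first building the $\ast$-homomorphism $\pi_0$ on $\mathbb{C}_q(\Gamma)$ and invoking trace preservation, whereas the paper's footnote checks orthonormality of $\{R_\boldw\Omega_\cN\}$ directly by matching the expansion coefficients of $T_{\boldw'}^\ast T_\boldw$ and $R_{\boldw'}^\ast R_\boldw$ --- which is the same observation that the $R_s$ satisfy the same multiplication table as the $T_s$.
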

\begin{proof}
The proof is routine, c.f. \cite[Proposition 2.12]{CaspersFima}. We sketch it here. Let $(L^2(\cN), \pi_\cN, \eta)$ be a GNS construction for $(\cN, \tau_{\cN})$. As $\tau_\cN$ is GNS faithful we may assume that $\cN$ is represented on $L^2(\cN)$ via $\pi_\cN$. We define a linear map $V: L^2(\cM_q) \rightarrow L^2(\cN)$ by $V \Omega = \eta$ and
\[
V (T_\boldw \Omega) = R_\boldw \eta, \qquad \textrm{ where } \boldw \in W,
\]
and $R_\boldw := R_{w_1} \ldots R_{w_n}$.  One checks that $V$ is  isometric by showing that  $\{ R_\boldw \eta \mid \boldw \in W \}$ is an orthonormal system.\footnote{The proof goes as follows. We may find unique coefficients $c_\boldv$ such that
\[
T_{w_n'} \ldots T_{w_1'} T_{w_1} \ldots T_{w_n} = \sum_{\boldv \in W} c_{\boldv} T_{\boldv}.
\]
We have $c_\emptyset = 1$ if $\boldw = \boldw'$ and $c_\emptyset = 0$ if $\boldw \not = \boldw'$ by comparing the trace of both sides of this expression.
In fact the coefficients $c_\boldv$ may be found by using the commutation relations for $T_s$ and the Hecke relation $T_s^2 = 1 + p T_s$ to `reduce' the left hand side of this expression. As the same relations hold for the operators $R_s$ (by assumption of the lemma) we also get $R_{w_n'} \ldots R_{w_1'} R_{w_1} \ldots R_{w_n} = \sum_{\boldv \in W} c_{\boldv} R_{\boldv}$. So,
\[
\begin{split}
 \langle R_\boldw \eta, R_{\boldw'} \eta \rangle
= \tau_{\cN} (  R_{\boldw'}^\ast R_\boldw )
=  \tau_{\cN}( R_{w_n'} \ldots R_{w_1'} R_{w_1} \ldots R_{w_n}    )
= \tau_{\cN}\left(  \sum_{\boldv \in W} c_{\boldv} R_{\boldv} \right) = c_\emptyset.
\end{split}
\]
This proves that indeed $V$ is isometric.} Putting $\pi( \: \cdot \: ) = V (\: \cdot \:) V^\ast$ concludes the lemma. As $V \Omega = \eta$ we get $\tau_\cN \circ \pi = \tau$.
\end{proof}
\begin{rmk}
 Note that the property $T_s^2 = 1 + p T_s, s \in S$ with $p = \frac{q-1}{\sqrt{q}}$ is equivalent to the usual Hecke relation
  $ (\sqrt{q}\: T_s - q) (\sqrt{q} \: T_s + 1) = 0$ that appears in the literature.
 \end{rmk}

We shall say that $(\widetilde{W}, \widetilde{S})$ is a Coxeter subsystem of $(W,S)$ if $\widetilde{S} \subseteq S$ and $\widetilde{m}(s,t) = m(s,t)$ for all $s,t \in \widetilde{S}$. Here $\widetilde{m}$ is the function on $\widetilde{S} \times \widetilde{S}$ that determines the commutation relations for $\widetilde{W}$, c.f. Section \ref{Sect=Coxeter}.

\begin{cor}\label{Cor=Expected}
Let $q > 0$. Let $(\widetilde{W}, \widetilde{S})$ be a Coxeter subsystem of a right-angled Coxeter system $(W, S)$. Let  $\widetilde{\cM}_q$ and $\cM_q$ be their respective Hecke von Neumann algebras.  Then naturally $\widetilde{\cM}_q$ is a von Neumann subalgebra of $\cM_q$. In particular, there exists a trace preserving normal conditional expectation $\mathcal{E}: \cM_q \rightarrow \widetilde{\cM}_q$.
\end{cor}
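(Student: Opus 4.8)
The plan is to construct the conditional expectation $\mathcal{E}$ directly on the level of the canonical bases, verify it is well-defined and completely positive, and then invoke the trace to establish the defining properties of a conditional expectation. First I observe that since $(\widetilde{W}, \widetilde{S})$ is a Coxeter subsystem with $\widetilde{S} \subseteq S$, every reduced word in $\widetilde{W}$ is automatically a reduced word in $W$ (the commutation relations agree), so there is a natural inclusion $\widetilde{W} \hookrightarrow W$ and a corresponding embedding of the canonical orthonormal bases $\{T_{\boldw}\Omega \mid \boldw \in \widetilde{W}\} \subseteq \{T_{\boldw}\Omega \mid \boldw \in W\}$ of $L^2(\cM_q)$. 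I would define $\mathcal{E}$ via the orthogonal projection $e: L^2(\cM_q) \to L^2(\widetilde{\cM}_q)$ onto the closed span of $\{T_{\boldw}\Omega \mid \boldw \in \widetilde{W}\}$, setting $\mathcal{E}(x) = e\, x\, e$ for $x \in \cM_q$, and then arguing that this lands in $\widetilde{\cM}_q$.

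The cleanest route is to first establish that $\widetilde{\cM}_q$ embeds in $\cM_q$ as a von Neumann subalgebra. For this I would apply Theorem \ref{Thm=Universal}: taking $\cN$ to be the von Neumann subalgebra of $\cM_q$ generated by $\{T_s \mid s \in \widetilde{S}\}$ with the restricted trace, the generators $R_s := T_s$ satisfy exactly the required commutation and Hecke relations, and the trace condition $\tau(T_{w_1}\cdots T_{w_n}) = 0$ for nonempty reduced $\boldw \in \widetilde{W}$ holds because such words are reduced in $W$. This yields a normal $\ast$-isomorphism $\widetilde{\cM}_q \cong \cN \subseteq \cM_q$ that is trace preserving. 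Once $\widetilde{\cM}_q$ is identified as a trace-preserving von Neumann subalgebra of the finite von Neumann algebra $(\cM_q, \tau)$, the existence of a unique trace-preserving normal conditional expectation $\mathcal{E}: \cM_q \to \widetilde{\cM}_q$ is a standard fact (Takesaki's theorem): for any von Neumann subalgebra of a finite von Neumann algebra that contains the unit, the trace-preserving expectation exists and is implemented by the Jones projection $e$ onto $L^2(\widetilde{\cM}_q)$.

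The main obstacle, and the step requiring genuine care rather than a citation, is verifying that the bases embed correctly, i.e. that the relation $\widetilde{m}(s,t) = m(s,t)$ for $s,t \in \widetilde{S}$ guarantees that a word is reduced in $\widetilde{W}$ if and only if it is reduced in $W$. In the right-angled setting this is transparent: reducedness is governed entirely by the commutation data among the letters appearing, and since $\widetilde{S} \subseteq S$ inherits the same $m(s,t)$ values, any cancellation available in $W$ for a word with letters in $\widetilde{S}$ is already available in $\widetilde{W}$ and vice versa. This ensures the trace condition transfers, so that Theorem \ref{Thm=Universal} applies and the embedding is isometric on $L^2$-vectors. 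With the subalgebra identification in hand, I would simply cite the standard existence and trace-invariance of the conditional expectation onto a subalgebra of a tracial von Neumann algebra to finish, noting that normality and trace-preservation are automatic from the construction via $e$.
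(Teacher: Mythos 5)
Your proposal is correct and follows essentially the same route as the paper: identify $\widetilde{\cM}_q$ as a trace-preserving von Neumann subalgebra of $\cM_q$ via Theorem \ref{Thm=Universal} (using that reduced words in $\widetilde{W}$ remain reduced in $W$), then invoke Takesaki's theorem on the existence of trace-preserving normal conditional expectations onto subalgebras of tracial von Neumann algebras. The additional detail you give about the Jones projection and the word-reducedness is a fuller elaboration of what the paper leaves implicit, but it is the same argument.
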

\begin{proof}
Theorem \ref{Thm=Universal} implies that $\widetilde{\cM}_q$ is a von Neumann subalgebra of $\cM_q$ and the canonical trace of $\cM_q$ agrees with the one on $\widetilde{\cM}_q$. Therefore $\widetilde{\cM}_q$ admits a trace preserving normal conditional expectation value, c.f. \cite[Theorem IX.4.2]{Takesaki2}.
\end{proof}

Consider the Hecke von Neumann algebra $\cM_q$ for the case that $S$ is a one-point set, $q> 0$ and $p = \frac{q-1}{\sqrt{q}}$. In that case we have $W = \{ e, s\}$ and $L^2(\cM_q)$ has a canonical basis $\Omega$ and $T_s \Omega$. With respect to this basis $T_s$ takes the form
$
\left(
\begin{array}{cc}
0 & 1 \\
1 & p \\
\end{array}
\right)
$
and one sees (using for example the relation $T_s^2 = 1 + p T_s$) that $\cM_q = \mathbb{C} {\rm Id}_2 \oplus \mathbb{C} T_s$, i.e. it is two dimensional. The following corollary uses the graph product, for which we refer to \cite{CaspersFima}. It is a generalization of the free product by adding a commutation relation to vertex algebras that share an edge; the free product is then given by a graph product over a graph with no edges. In \cite{CaspersFima} the symbol $\ast$ was used for graph products.   We use the notation $\star$ instead to distinguish them from free (amalgamated) products. 

\begin{cor}\label{Cor=GraphDec}
Let $(W,S)$ be an arbitrary right-angled Coxeter system and let $q> 0$. Let $\Gamma$ be the graph associated to $(W,S)$ as before. For $s \in S$ let $\cM_q(s)$ be the 2-dimensional Hecke von Neumann subalgebra corresonding to the one-point set $\{ s \}$.  Then we have a graph product decomposition $\cM_q = \oast_{s \in V\Gamma}  \cM_q(s)$.
\end{cor}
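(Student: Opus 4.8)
The plan is to identify $\cM_q$ with the graph product $\cG := \ast_{s \in V\Gamma} \cM_q(s)$ by means of the universal property in Theorem~\ref{Thm=Universal}. Recall from \cite{CaspersFima} that $\cG$ is generated by the vertex algebras, copies of which commute exactly when the corresponding vertices are joined by an edge, and that $\cG$ carries a canonical normal state $\tau_\cG$, the graph product of the traces $\tau_s := \tau|_{\cM_q(s)}$. Since each $\cM_q(s)$ is finite dimensional with faithful trace $\tau_s$, the state $\tau_\cG$ is a faithful normal trace and in particular GNS-faithful. Write $R_s \in \cG$ for the image of the generator $T_s \in \cM_q(s)$. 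Because $\cM_q(s) = \bc \Id_2 \oplus \bc T_s$ is two dimensional with $T_s^2 = 1 + p T_s$ and $\tau_s(T_s) = 0$, each $R_s$ is self-adjoint, satisfies $R_s^2 = 1 + p R_s$ and $\tau_\cG(R_s) = 0$, and the $R_s$ together with $\Id$ generate $\cG$.

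First I would verify the algebraic relations demanded by Theorem~\ref{Thm=Universal}. Self-adjointness and the Hecke relation $R_s^2 = 1 + p R_s$ are inherited from $\cM_q(s)$ and survive the (trace-preserving, hence injective) embedding of $\cM_q(s)$ into $\cG$. The commutation relation $R_s R_t = R_t R_s$ whenever $m(s,t) = 2$ is precisely the defining feature of the graph product over $\Gamma$: vertices joined by an edge give commuting vertex algebras. It then remains to check the moment condition $\tau_\cG(R_{w_1} \cdots R_{w_n}) = 0$ for every non-empty reduced word $\boldw = w_1 \cdots w_n \in W$.

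This moment condition is the crux of the argument. Since $\tau_{w_i}(R_{w_i}) = 0$, each $R_{w_i}$ is a centred element of the vertex algebra $\cM_q(w_i)$, so by the defining property of the graph product state (vanishing of $\tau_\cG$ on products of centred letters indexed by a reduced sequence of vertices, see \cite{CaspersFima}) it suffices to show that the sequence $(w_1, \ldots, w_n)$ is reduced in the graph product sense, i.e. that no two letters sitting at the same vertex can be brought adjacent using only the commutations coming from edges of $\Gamma$. For a right-angled Coxeter group this is exactly the statement that $w_1 \cdots w_n$ is of minimal length: by Tits' solution of the word problem the only reduction available is $s^2 = e$, applied after commuting two equal letters together, so a word is reduced precisely when no such commutation-then-cancellation is possible, which is the graph product reducedness condition. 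Hence the moment condition holds. Making this combinatorial translation between minimal length in $W$ and reducedness of the vertex sequence in $\cG$ precise is the main obstacle.

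With all hypotheses of Theorem~\ref{Thm=Universal} verified for $(\cG, \tau_\cG)$ and generators $R_s$, I obtain a unique normal $\ast$-homomorphism $\pi \colon \cM_q \to \cG$ with $\pi(T_s) = R_s$ and $\tau_\cG \circ \pi = \tau$. Injectivity follows from faithfulness of $\tau$: if $\pi(x) = 0$ then $\tau(x^\ast x) = \tau_\cG(\pi(x)^\ast \pi(x)) = 0$, whence $x = 0$. For surjectivity, a normal $\ast$-homomorphism has ultraweakly closed range, and $\pi(\cM_q)$ contains every generator $R_s$ of $\cG$, so $\pi(\cM_q) = \cG$. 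Therefore $\pi$ is a normal $\ast$-isomorphism, giving the graph product decomposition $\cM_q = \ast_{s \in V\Gamma} \cM_q(s)$.
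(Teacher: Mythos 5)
Your proof is correct and follows essentially the same route as the paper: both apply Theorem~\ref{Thm=Universal} to the graph product $\ast_{s \in V\Gamma} \cM_q(s)$ equipped with its canonical trace, the moment condition coming down to the normal form theorem for right-angled Coxeter groups (c.f.\ \cite[Theorem 3.9]{Green}). The only difference is cosmetic and sits at the end: the paper invokes the universal property of the graph product (\cite[Proposition 2.12]{CaspersFima}) to produce the inverse map, whereas you deduce injectivity from trace-preservation together with faithfulness of $\tau$ and surjectivity from normality of $\pi$; both arguments are valid.
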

\begin{proof}
Let $T_s \in \cM_q, s \in S$ be the operators as introduced in Section \ref{Sect=SubHecke}. Let $\widetilde{T}_s, s \in S$ be the operator $T_s$ but then considered in the algebra $\cM_q(s)$ which in turn is contained in $\oast_{s \in V\Gamma}   \cM_q(s)$ with conditional expectation. Now the map $T_s \mapsto \widetilde{T}_{s}$ determines an isomorphism by  Theorem \ref{Thm=Universal} and the universal property of the graph product given by  \cite[Proposition 2.12]{CaspersFima}.
\end{proof}


\subsection{Non-injectivity}

\begin{dfn}
	A von Neumann algebra $\cM \subseteq \cB(\cH)$ is called injective if there exists a conditional expectation $\mathcal{E}: \cB(\cH) \rightarrow \cM$. 
\end{dfn}

\begin{thm}\label{Thm=NonInjective3}
	Let $(W,S)$ be an irreducible right-angled Coxter system with $\vert S \vert \geq 3$. Then $\cM_q$ is non-injective. 
\end{thm}
\begin{proof}
	It suffices to prove that $\cM_q$ contains an expected non-injective von Neumann subalgebra. Now any irreducible Coxeter system $(W,S)$ contains a Coxeter subsystem $(\widetilde{W}, \widetilde{S})$ either of the form $\widetilde{S} = \{ r,s,t \}$ with $\widetilde{m}(r,s) = \widetilde{m}(r,t) = \widetilde{m}(s,t) = \infty$ or $\widetilde{S} = \{ r,s,t \}$ with $\widetilde{m}(r,s) = \widetilde{m}(r,t) = \infty$ and $\widetilde{m}(s,t) = 2$. So it satisfies to prove non-injectivity for these systems. In both cases, for $q$ fixed, set $\cM$ to be the Hecke von Neumann algebra of the Coxter system consisting of just $\{ r \}$. $\cM$ has dimension 2.  Set $\cN$ to be the Hecke von Neumann algebra of the Coxter system $\{s, t\}$, which is infinite dimensional in case $m(s,t) = \infty$ and 4 dimensional if $m(s,t) = 2$ (being the tensor product of two 2 dimensional algebras).  Then $\cM_q$ is isomorphic to the free product $\cM \ast \cN$ over the canonical traces by Corollary \ref{Cor=GraphDec} and \cite[Remark 3.23]{CaspersFima}.  As  $\dim(\cM) + \dim(\cN) \geq 5$ it follows that $\cM_q$ is non-injective from \cite[Theorem 4.1]{Ueda} (see comment (5) in \cite[Remark 4.2]{Ueda}).
\end{proof}

\subsection{Haagerup property}

We first construct radial multipliers.

\begin{prop}\label{Prop=Radial}
	Let $(W,S)$ be a right-angled Coxeter group with Hecke von Neumann algebra $\cM_q, q > 0$.
	For every $0 < r < 1$ there exist a normal unital completely positive map $\Phi_r: \cM_q \rightarrow \cM_q$ that is determined by $\Phi_r(T_{\boldw}) = r^{\vert \boldw \vert} T_{\boldw}$.
\end{prop}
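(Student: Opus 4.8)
The plan is to build $\Phi_r$ as a graph product of completely positive maps on the two-dimensional vertex algebras, exploiting the decomposition $\cM_q = \ast_{s \in V\Gamma} \cM_q(s)$ from Corollary \ref{Cor=GraphDec}. For each $s \in S$ I would define a linear map $\Phi_r^{(s)}: \cM_q(s) \to \cM_q(s)$ on the basis $\{\Id, T_s\}$ of the $2$-dimensional algebra by $\Phi_r^{(s)}(\Id) = \Id$ and $\Phi_r^{(s)}(T_s) = r T_s$. This is automatically well defined, unital, and trace preserving, since $\tau(r T_s) = 0 = \tau(T_s)$. The graph product $\Phi_r := \ast_{s} \Phi_r^{(s)}$ will then be the desired map, so the two points requiring work are (i) complete positivity of each vertex map $\Phi_r^{(s)}$ and (ii) the identification of the action of $\Phi_r$ on the elements $T_\boldw$.

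For (i), I would use that $\cM_q(s)$ is abelian, being generated by the single self-adjoint element $T_s$, so that complete positivity reduces to positivity. Diagonalising $T_s$ through the relation $T_s^2 = 1 + p T_s$ gives the two spectral values $\lambda_\pm = \tfrac12\bigl(p \pm \sqrt{p^2+4}\bigr)$; since their product equals $-1$ we have $\lambda_+ > 0 > \lambda_-$ for \emph{every} $q>0$. Writing the minimal spectral projections as $\delta_\pm = \pm (T_s - \lambda_\mp \Id)/(\lambda_+ - \lambda_-)$ and evaluating $\Phi_r^{(s)}(\delta_\pm)$ at the two spectral points, a short sign check (using $0<r<1$ together with $\lambda_+ > 0 > \lambda_-$) shows that both coordinates are strictly positive. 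Hence $\Phi_r^{(s)}(\delta_\pm) \geq 0$, so $\Phi_r^{(s)}$ is positive, and by abelianness completely positive; being defined on a finite-dimensional algebra it is moreover normal and, as noted, trace preserving.

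I would then invoke the functoriality of the graph product with respect to normal trace-preserving ucp maps, c.f.\ \cite{CaspersFima} (the graph product analogue of Boca's free product theorem): the graph product $\Phi_r = \ast_s \Phi_r^{(s)}$ of normal trace-preserving ucp maps is again a normal trace-preserving ucp map on $\cM_q = \ast_s \cM_q(s)$, and since each $\Phi_r^{(s)}$ preserves $\tau$ it maps trace-zero elements to trace-zero elements, so $\Phi_r$ acts factorwise on reduced words. Concretely, for a reduced expression $\boldw = w_1 \cdots w_n$ we have $T_\boldw = T_{w_1} \cdots T_{w_n}$ by \eqref{Eqn=Tmaps}, which under the isomorphism of Corollary \ref{Cor=GraphDec} is a reduced word in the trace-zero elements $T_{w_i} \in \cM_q(w_i)$; hence $\Phi_r(T_\boldw) = \Phi_r^{(w_1)}(T_{w_1}) \cdots \Phi_r^{(w_n)}(T_{w_n}) = r^{|\boldw|} T_\boldw$. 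As this value depends only on $|\boldw|$, it is independent of the chosen reduced expression. The only genuinely delicate steps are the sign analysis giving positivity of the vertex maps uniformly in $q>0$ and the careful appeal to graph-product functoriality, in particular that the resulting map is normal and acts factorwise on reduced words; the remainder is routine.
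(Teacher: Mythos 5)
Your proof is correct and follows essentially the same route as the paper: decompose $\cM_q$ as the graph product of the two-dimensional vertex algebras via Corollary \ref{Cor=GraphDec}, define the vertex maps $1 \mapsto 1$, $T_s \mapsto r T_s$, verify they are normal trace-preserving ucp maps, and invoke the graph product of such maps from \cite{CaspersFima}. The only difference is in one computational step: the paper proves complete positivity of the vertex map by exhibiting explicit Kraus operators in the $2\times 2$ matrix picture of $\cM_q(s)$, whereas you use that $\cM_q(s)$ is abelian and check positivity on the spectral projections of $T_s$; both verifications are valid.
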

\begin{proof}
	As in Corollary \ref{Cor=GraphDec} we identify $\cM_q$ with the graph product $\ast_{s \in V\Gamma}^\Gamma (\cM_q(s), \tau_{s})$ where $\tau_s$ is the tracial state on $\cM_q(s)$. Consider the map $\Phi_{r,s}: \cM_q(s) \rightarrow \cM_q(s)$ determined by $1 \mapsto 1, T_s \mapsto r T_s$. This map is unital and completely postivie: indeed consider matrices
	\[
	A :=
	\left(
	\begin{array}{cc}
	\sqrt{1 - r} & 0 \\
	0 & 0
	\end{array}
	\right),
	B =
	\left(
	\begin{array}{cc}
	0 &  \sqrt{1 - r} \\
	0 & 0
	\end{array}
	\right),
	C =
	\left(
	\begin{array}{cc}
	\sqrt{r} &  0 \\
	0 & \sqrt{r}
	\end{array}
	\right).
	\]
	Then $\Phi_{r,s}$ agrees with $x \mapsto A^\ast x A + B^\ast x B + C^\ast x C$ as before Corollary \ref{Cor=GraphDec}  we  already noted that $
	T_s =
	\left(
	\begin{array}{cc}
	0 &  1 \\
	1 &  p
	\end{array}
	\right)$. Furthermore $\Phi_{r,s}$ preserves the trace $\tau_s$ as $\tau_s$ is the vector state associated with $(1, 0)^t$. Therefore we may apply \cite[Proposition 2.30]{CaspersFima} and obtain the graph product ucp map $\Phi_{r} := \oast_{s \in V\Gamma} \Phi_{r,s}$ which proves the proposition.
\end{proof}

\begin{dfn}
	Recall that a von Neumann algebra $\cM$ with normal faithful tracial state $\tau$ has the {\it Haagerup property} if there exists a net $\Phi_i$ of $\tau$-preserving ucp maps $\cM \rightarrow \cM$ such that $T_i: x \Omega_\tau \mapsto \Phi_i(x) \Omega_\tau$ is compact and converges to 1 strongly.
\end{dfn}

\begin{thm}
For any Coxeter system $(W,S)$ and any $q>0$ the von Neumann algebra $\cM_q$ has the Haagerup property. 
\end{thm}
	\begin{proof}	
		If $S$ is finite  Proposition \ref{Prop=Radial} directly shows that $\cM_q$ has the Haagerup property by letting $r \nearrow 1$. Then the general case follows by an inductive limit argument on finite Coxeter subsystems using the conditional expectations from Corollary \ref{Cor=Expected}.
\end{proof}

\section{Completely contractive approximation property}\label{Sect=Approximation}

We show that for a right angled Coxeter system $(W,S)$ the Hecke von Neumann algebra $\cM_q$ has the wk-$\ast$ CCAP, see Definition \ref{Dfn=CBAP}. The proof follows a -- by now standard -- strategy of Haagerup \cite{HaagerupExample} by considering radial multipliers first and then showing that word length cut-downs have a complete bound that is at most polynomial in the word length.

\subsection{Creation/annihilation arguments}\label{Sect=CreationAnnihilation}\hyphenation{anni-hi-la-tion}
Here we present some combinatorical arguments that we need in Section \ref{Sect=CutDown}. We have chosen to separate these from the proofs of Section \ref{Sect=CutDown} so that the reader could skip them at first sight.

  We introduce the following notation. Let $\boldx, \boldw \in W$. We shall write $\boldw \leq \boldx$ for saying that $\vert \boldw^{-1} \boldx \vert = \vert \boldx \vert - \vert \boldw \vert$.   Then $\boldw < \boldx$ is defined naturally. So $\boldw \leq \boldx$ means that $\boldw$ is obtained from $\boldx$ by cutting off a tail. An element $\boldv \in W$ is called a {\it clique word} in case its letters form a clique. For $\Lambda$  a clique in $W$ and $\boldv \in W$ we define $\boldv(2, \emptyset)$ as the maximal\footnote{Suppose that $\Gamma_0$ and $\Gamma_1$ are cliques such that both $\vert \boldv V \Gamma_i \vert = \vert \boldv \vert - \vert V \Gamma_i \vert$ then the letters $V\Gamma_0$ and $V\Gamma_1$ must commute. So  the union $\Gamma_2 = \Gamma_0 \cup \Gamma_1$ is a clique with $\vert \boldv V \Gamma_2 \vert = \vert \boldv \vert - \vert V \Gamma_2 \vert$.} clique $\Gamma_0$ such that $\vert \boldv V \Gamma_0 \vert = \vert \boldv \vert - \vert V \Gamma_0 \vert$. Then we set
 the decomposition $\boldv = \boldv(1, \Lambda)  \boldv(2, \Lambda)$ with $\vert \boldv \vert = \vert \boldv(1, \Lambda) \vert + \vert \boldv(2, \Lambda) \vert$ and  $\boldv(2, \Lambda) = \boldv(2, \emptyset) \backslash \Lambda$ (which uniquely determines $\boldv(1, \Lambda)$).  For $\boldg \leq \boldx$ we let $\Lambda_{\boldg, \boldx}$ be  $(\boldx^{-1} \boldg)(2, \emptyset)$. In other words $\Lambda_{\boldg, \boldx}$ is the maximal clique that appears at the start of $\boldg^{-1} \boldx$.
 We let $C(\boldg, \boldx)$ be the collection of $\boldw \in W$ with $\boldg \leq \boldw \leq \boldg \Lambda_{\boldg, \boldx}$. Note that $C(\boldg, \boldx)$ contains at least $\boldg$ and $\boldg \Lambda_{\boldg, \boldx}$ (and the latter elements can be equal). We write $C(\boldg, +)$ for $\cup_{\boldg \leq \boldx} C(\boldg, \boldx)$.

 \begin{example}
 Consider the Coxeter system $(W,S)$ with $S = \{ r,s,t \}$ in which $m(r,s) = 2$ and $m(r,t) = m(s,t) = \infty$. Consider $\boldv = trs$. Then $\boldv(1, \emptyset) = t,  \boldv(2, \emptyset) = rs, \boldv(1, r) = tr$ and $\boldv(2, r) = s$. Also $\Lambda_{t, trst} = \{t, tr, ts, trs  \}$.
 \end{example}

\begin{lem}\label{Lem=ExclusionThingy}
Let $\boldx, \boldw \in W$. Let $\boldw = \boldw' \boldw''$ be the decomposition with $\vert \boldw \vert = \vert \boldw' \vert + \vert \boldw'' \vert$ such that $\vert \boldw'' \boldx \vert = \vert \boldx \vert - \vert \boldw''\vert$  and $\vert \boldw \boldx \vert = \vert \boldx \vert - \vert \boldw'' \vert + \vert \boldw' \vert$. Take $(\boldw'')^{-1} \leq \boldg \leq \boldx$. Then, for $\boldv \in C(\boldg, \boldx)$,
\begin{equation}\label{Eqn=LaChouffe3}
(\boldw\boldv)(2, (\boldw \boldg)(2, \emptyset) \backslash \boldg(2, \emptyset) ) = \boldv(2, \boldg(2, \emptyset) \backslash (\boldw\boldg)(2, \emptyset) )
\end{equation}
and
 \begin{equation}\label{Eqn=LaChouffe4}
\begin{split}
 \vert (\boldw \boldv)(1, (\boldw\boldg)(2, \emptyset) \backslash \boldg(2, \emptyset) ) \vert = \vert \boldv(1, \boldg(2, \emptyset) \backslash \boldw\boldg(2, \emptyset)) \vert- \vert \boldw'' \vert + \vert \boldw' \vert.
\end{split}
\end{equation}
\end{lem}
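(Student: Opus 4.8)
The plan is to reduce both displayed identities to a single statement about how the maximal terminal clique of a word behaves when a commuting clique is appended on the right, and then to settle that statement by a short set-theoretic computation. The main point is that the hypotheses force $\boldv$ and $\boldw\boldv$ into parallel normal forms that differ only by the replacement of $\boldg$ with $\boldw\boldg$.

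First I would exploit the hypotheses to put $\boldv$ and $\boldw\boldv$ in parallel normal form. Since $\boldv \in C(\boldg,\boldx)$ we may write $\boldv = \boldg\kappa$ with $\vert\boldv\vert = \vert\boldg\vert + \vert\kappa\vert$, where $\kappa := \boldg^{-1}\boldv$ is a clique word contained in $\Lambda_{\boldg,\boldx}$; because $\Lambda_{\boldg,\boldx}$ sits at the start of $\boldg^{-1}\boldx$, the word $\kappa$ is a prefix of $\boldg^{-1}\boldx$. The condition $(\boldw'')^{-1}\leq\boldg$ gives $\boldg = (\boldw'')^{-1}\boldh$ with $\vert\boldg\vert = \vert\boldw''\vert + \vert\boldh\vert$, so $\boldw''$ annihilates completely into $\boldg$ and hence into $\boldv$, yielding $\boldw''\boldv = \boldh\kappa$ with additive length. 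For the creation part I would use the defining property $\vert\boldw\boldx\vert = \vert\boldx\vert - \vert\boldw''\vert + \vert\boldw'\vert$ together with $\boldx = \boldg(\boldg^{-1}\boldx)$ to check that $\boldw'$ creates completely into $\boldh(\boldg^{-1}\boldx)$; since $\kappa$ is a prefix of $\boldg^{-1}\boldx$, the word $\boldw'$ then creates completely into the prefix $\boldh\kappa$ as well. This gives the two normal forms
\begin{equation*}
\boldv = \boldg\kappa, \qquad \boldw\boldv = \boldw'\boldh\kappa = (\boldw\boldg)\kappa,
\end{equation*}
both with additive lengths, so $\kappa$ is appended by creation to $\boldg$ and to $\boldw\boldg$ alike. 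In particular $\kappa$ is disjoint from the terminal cliques $\boldg(2,\emptyset)$ and $(\boldw\boldg)(2,\emptyset)$.

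The combinatorial heart is the formula, for a clique word $\kappa$ appended to a word $\boldy$ by creation,
\begin{equation*}
(\boldy\kappa)(2,\emptyset) = \kappa \sqcup \big( \boldy(2,\emptyset) \cap \Link(\kappa) \big),
\end{equation*}
which says the maximal terminal clique of $\boldy\kappa$ is $\kappa$ together with exactly those terminal letters of $\boldy$ commuting with all of $\kappa$. I would prove this by the standard creation/annihilation argument: a generator $t\notin\kappa$ can be moved to the very end of $\boldy\kappa$ only if it commutes with $\kappa$ and already lies at the end of $\boldy$, that is $t\in\boldy(2,\emptyset)\cap\Link(\kappa)$, and conversely every such $t$ can be moved there jointly with all of $\kappa$; the clique property and maximality are immediate since $\boldy(2,\emptyset)$ is a clique whose retained letters commute with $\kappa$. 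Applying this with $\boldy=\boldg$ and $\boldy=\boldw\boldg$ and writing $E = \boldg(2,\emptyset)$, $F = (\boldw\boldg)(2,\emptyset)$, the left-hand side of \eqref{Eqn=LaChouffe3} becomes $\big[\kappa\sqcup(F\cap\Link(\kappa))\big]\backslash(F\backslash E)$ and the right-hand side becomes $\big[\kappa\sqcup(E\cap\Link(\kappa))\big]\backslash(E\backslash F)$; since $\kappa\cap E = \kappa\cap F = \emptyset$ and the convention $A\backslash B = A\backslash(A\cap B)$ turns these into honest set differences, both collapse to $\kappa\sqcup(E\cap F\cap\Link(\kappa))$, proving \eqref{Eqn=LaChouffe3}. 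Then \eqref{Eqn=LaChouffe4} follows by length bookkeeping: the two terminal parts in \eqref{Eqn=LaChouffe3} are equal clique words, hence of equal length, so subtracting them leaves $\vert(\boldw\boldv)(1,\cdots)\vert - \vert\boldv(1,\cdots)\vert = \vert\boldw\boldv\vert - \vert\boldv\vert = \vert\boldw'\vert - \vert\boldw''\vert$, which is exactly the asserted identity.

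The main obstacle, and the step deserving the most care, is establishing the two parallel normal forms — specifically verifying that $\boldw'$ creates completely into $\boldh\kappa$ rather than merely into $\boldh$. The danger is that a letter of $\boldw'$ might annihilate against a letter of $\kappa$ that commutes all the way through $\boldh$; this is ruled out precisely because $\kappa$ is a prefix of $\boldg^{-1}\boldx$, so any such annihilation would already occur inside $\boldw'\boldh(\boldg^{-1}\boldx) = \boldw\boldx$, contradicting the given length formula for $\boldw\boldx$. Once the normal forms are secured, the remaining terminal-clique computation is routine.
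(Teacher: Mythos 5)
Your proof is correct and follows essentially the same route as the paper: both arguments rest on the identification $(\boldy\kappa)(2,\emptyset) = \kappa \cup \bigl(\boldy(2,\emptyset)\cap\Link(\kappa)\bigr)$ for $\boldy\in\{\boldg,\boldw\boldg\}$ with $\kappa=\boldg^{-1}\boldv$, after which both sides of \eqref{Eqn=LaChouffe3} collapse to $\kappa\cup\bigl(\boldg(2,\emptyset)\cap(\boldw\boldg)(2,\emptyset)\cap\Link(\kappa)\bigr)$ and \eqref{Eqn=LaChouffe4} follows by subtracting lengths. Your explicit verification of the parallel normal forms $\boldv=\boldg\kappa$, $\boldw\boldv=(\boldw\boldg)\kappa$ is a detail the paper leaves implicit, but it is a correct and welcome addition rather than a deviation.
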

\begin{proof}
Let $\boldv \in C(\boldg, \boldx)$. The clique $\boldv(2, \emptyset)$ consists of the clique $\boldg^{-1} \boldv$ plus all letters in $\boldg(2, \emptyset)$ that commute with  $\boldg^{-1} \boldv$. Therefore $\boldv(2, \boldg(2, \emptyset) \backslash (\boldw \boldg)(2, \emptyset) )$ is the clique consisting of $\boldg^{-1}\boldv$ plus all letters in $(\boldw \boldg)(2, \emptyset) \cap \boldg(2, \emptyset)$ that commute with $\boldg^{-1} \boldv$. On the other hand $(\boldw \boldv)(2, \emptyset)$ consists of the clique $\boldg^{-1}\boldv$ together with all letters in $(\boldw \boldg)(2, \emptyset)$ that commute with $\boldg^{-1} \boldv$. Then $(\boldw \boldv)(2, (\boldw \boldg)(2, \emptyset) \backslash \boldg(2, \emptyset) )$ equals $\boldg^{-1} \boldv$ together with all elements in $(\boldw \boldg)(2, \emptyset) \cap \boldg(2, \emptyset)$ that commute with $\boldg^{-1} \boldv$. So we conclude \eqref{Eqn=LaChouffe3}.  Therefore,
\begin{equation}
\begin{split}
 \vert (\boldw \boldv)(1, (\boldw\boldg)(2, \emptyset) \backslash \boldg(2, \emptyset)) \vert
= & \vert \boldw \boldv \vert - \vert (\boldw \boldv)(2, (\boldw\boldg)(2, \emptyset) \backslash \boldg(2, \emptyset))\vert \\
= & \vert \boldv \vert - \vert \boldw'' \vert + \vert \boldw' \vert
- \vert \boldv(2, \boldg(2, \emptyset) \backslash (\boldw\boldg)(2, \emptyset)) \vert \\
= & \vert \boldv(1, \boldg(2, \emptyset) \backslash \boldw\boldg(2, \emptyset)) \vert- \vert \boldw'' \vert + \vert \boldw' \vert.
\end{split}
\end{equation}

\end{proof}

\begin{lem}\label{Lem=Tediously}
Let $\boldx, \boldw \in W$ and decompose $\boldw = \boldw' \boldw''$ such that $\vert \boldw \vert = \vert \boldw'\vert + \vert \boldw''\vert, \vert \boldw'' \boldx \vert = \vert \boldx \vert - \vert \boldw'' \vert$ and $\vert \boldw \boldx \vert = \vert \boldx \vert - \vert \boldw'' \vert + \vert \boldw' \vert$. Let $(\boldw'')^{-1} \leq \boldg \leq \boldx$. Then:
\begin{enumerate}
\item $\boldg(2, \emptyset) \backslash (\boldw \boldg)(2, \emptyset) = \boldg(2, \emptyset) \backslash (\boldw'' \boldg)(2, \emptyset)$;
\item For $\boldv \in C(\boldg, \boldx)$ we have
\begin{equation}\label{Eqn=EndSet}
\boldv(2, \boldv(2, \emptyset) \backslash (\boldw'' \boldv)(2, \emptyset) ) = \boldv(2, \boldg(2, \emptyset) \backslash (\boldw'' \boldg)(2, \emptyset)  ).
\end{equation}
\end{enumerate}
\end{lem}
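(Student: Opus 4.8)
The plan is to reduce both identities to the single-letter criterion that for $\boldv\in W$ and $s\in S$ one has $s\in\boldv(2,\emptyset)$ iff $\vert\boldv s\vert<\vert\boldv\vert$, i.e.\ some occurrence of $s$ in $\boldv$ can be commuted to the far right end; this is exactly the content of the commutation footnote defining $\boldv(2,\emptyset)$ (cancellable end-letters pairwise commute, so the maximal such clique is just the set of all of them). Throughout I write $E(\boldv)$ for $\boldv(2,\emptyset)$. First I would record the normal forms the hypotheses force. Since $(\boldw'')^{-1}\leq\boldg$ we may write $\boldg=(\boldw'')^{-1}\boldh$ with $\boldh=\boldw''\boldg$ and $\vert\boldg\vert=\vert\boldw''\vert+\vert\boldh\vert$, so $\boldw\boldg=\boldw'\boldh$. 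From $\boldg\leq\boldx$ (cutting the common prefix $(\boldw'')^{-1}$ of $\boldg$ and $\boldx$) one gets $\boldh\leq\boldw''\boldx$, and since $\boldw'$ creates fully on $\boldw''\boldx$ the length comparison $\vert\boldw'\vert+\vert\boldw''\boldx\vert=\vert\boldw'\boldw''\boldx\vert\leq\vert\boldw'\boldh\vert+\vert(\boldw''\boldx)\boldh^{-1}\vert\leq\vert\boldw'\vert+\vert\boldw''\boldx\vert$ forces $\vert\boldw'\boldh\vert=\vert\boldw'\vert+\vert\boldh\vert$, i.e.\ $\boldw'$ creates fully on $\boldh$ and $\boldw'\boldh$ is reduced. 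For (2), $\boldv\in C(\boldg,\boldx)$ gives $\boldv=\boldg\boldc$ with $\boldc$ a sub-clique of $\Lambda_{\boldg,\boldx}$; then $(\boldw'')^{-1}\leq\boldv$ yields $\boldw''\boldv=\boldh\boldc$ (reduced), and crucially every letter of $\boldc$ satisfies $\vert\boldg s\vert=\vert\boldg\vert+1$, i.e.\ lies outside $E(\boldg)$.

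For part (1) I would prove the equivalent statement $E(\boldg)\cap E(\boldw\boldg)=E(\boldg)\cap E(\boldw''\boldg)$ (complementing inside $E(\boldg)$). The inclusion $\supseteq$ is immediate: if $s\in E(\boldh)=E(\boldw''\boldg)$, the same occurrence still reaches the end of the reduced word $\boldw'\boldh=\boldw\boldg$, so $s\in E(\boldw\boldg)$. For $\subseteq$, take $s\in E(\boldg)\cap E(\boldw'\boldh)$ and ask which occurrence of $s$ reaches the end of $\boldw'\boldh$. If some occurrence in $\boldh$ does, then $s\in E(\boldh)=E(\boldw''\boldg)$ and we are done. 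Otherwise the reaching occurrence lies in $\boldw'$; then $s$ commutes with every letter of $\boldh$ and sits at the end of $\boldw'$, and no occurrence of $s$ lies in $\boldh$ (such an occurrence would itself reach the end). Since $s\in E(\boldg)$ with $\boldg=(\boldw'')^{-1}\boldh$, $s\notin\boldh$, and $s$ commuting with all of $\boldh$, the end-reaching occurrence of $s$ in $\boldg$ lies in $(\boldw'')^{-1}$ at its end, i.e.\ $s$ sits at the start of $\boldw''$. Then $s$ is simultaneously at the end of $\boldw'$ and the start of $\boldw''$, contradicting $\vert\boldw\vert=\vert\boldw'\vert+\vert\boldw''\vert$; so this case is vacuous and $s\in E(\boldw''\boldg)$.

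For part (2) I would first rewrite both sides using $\boldv(2,\Lambda)=E(\boldv)\setminus\Lambda$ and the convention $A\backslash B=A\setminus(A\cap B)$: the left side collapses, via $X\setminus(X\setminus Y)=X\cap Y$, to $E(\boldv)\cap E(\boldh\boldc)$, while the right side is $E(\boldv)\setminus\bigl(E(\boldg)\setminus E(\boldh)\bigr)$. Thus the identity is equivalent to the biconditional: for $s\in E(\boldv)$, $s\in E(\boldh\boldc)$ iff $\bigl(s\notin E(\boldg)$ or $s\in E(\boldh)\bigr)$. I settle this by locating the end-reaching occurrence of $s$ in $\boldv=(\boldw'')^{-1}\boldh\boldc$. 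If $s$ occurs in $\boldc$ it lies in $E(\boldh\boldc)$, and being a letter of $\Lambda_{\boldg,\boldx}$ it satisfies $s\notin E(\boldg)$, so both sides hold. If $s\notin\boldc$ then $s$ commutes with all of $\boldc$; when its reaching occurrence is in $\boldh$ it also reaches the ends of $\boldh$ and $\boldh\boldc$, so $s\in E(\boldh)$ and $s\in E(\boldh\boldc)$ and both sides hold; when it is in $(\boldw'')^{-1}$ then $s$ commutes with all of $\boldh$ and $\boldc$, whence $s\in E(\boldg)$, while $s\notin\boldh$ (otherwise two copies of $s$ would cancel in the reduced word $(\boldw'')^{-1}\boldh$), giving $s\notin E(\boldh)$ and $s\notin E(\boldh\boldc)$, so both sides fail. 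In every case the biconditional holds.

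The routine-but-delicate part, and the main obstacle, is the bookkeeping of \emph{which} occurrence of a repeated generator reaches the end of a word, together with the repeated appeals to reducedness that kill the degenerate configurations (a letter lying at both the end of $\boldw'$ and the start of $\boldw''$; a letter of $(\boldw'')^{-1}$ commuting past all of $\boldh$ yet also occurring in $\boldh$). These are precisely the points where the hypotheses $\vert\boldw\vert=\vert\boldw'\vert+\vert\boldw''\vert$ and $(\boldw'')^{-1}\leq\boldg\leq\boldx$ are consumed, so I would isolate the single-letter commutation criterion as a preliminary observation and invoke it uniformly, rather than re-deriving each cancellation by hand.
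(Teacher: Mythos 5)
Your proof is correct and follows essentially the same route as the paper: both arguments reduce everything to elementary bookkeeping of which letters of a reduced word can be commuted to its end, using the normal form theorem for right-angled Coxeter groups. Your letter-by-letter biconditional in part (2) is just a pointwise rephrasing of the paper's set computation with the auxiliary set $A$ of letters of $\boldg(2,\emptyset)$ commuting with $\boldg^{-1}\boldv$, and in part (1) you carefully spell out the case (a letter simultaneously at the end of $\boldw'$ and the start of $\boldw''$) that the paper dismisses with the phrase ``$\boldw'$ creates letters in $\boldw''\boldg$''.
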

\begin{proof}
(1) Because $(\boldw'')^{-1} \leq \boldg \leq \boldx$ we also have $\vert \boldw'' \boldg \vert = \vert \boldg \vert - \vert \boldw'' \vert$ and $\vert \boldw \boldg \vert = \vert \boldg \vert - \vert \boldw'' \vert + \vert \boldw' \vert$. So $\boldw'$ creates letters in $\boldw'' \boldg$ so that  $\boldg(2, \emptyset) \backslash (\boldw \boldg)(2, \emptyset) = \boldg(2, \emptyset) \backslash (\boldw'' \boldg)(2, \emptyset)$.

(2)  Let $A$ be the set of letters in $\boldg(2, \emptyset)$ that commute with $\boldg^{-1}\boldv$. The clique $\boldv(2, \emptyset)$ consists of $\boldg^{-1} \boldv \cup A$  . This means that $\boldv(2, \boldv(2, \emptyset) \backslash (\boldw'' \boldv)(2, \emptyset) )$ consists of $\boldg^{-1} \boldv \cup A$ intersected with $(\boldw'' \boldv)(2, \emptyset)$. The intersection of $(\boldw'' \boldv)(2, \emptyset)$ with $\boldg^{-1} \boldv$ is $\boldg^{-1} \boldv$   so that $\boldv(2, \boldv(2, \emptyset) \backslash (\boldw'' \boldv)(2, \emptyset) ) = \boldg^{-1} \boldv \cup (A \cap (\boldw''\boldv)(2, \emptyset))$. On the other hand $\boldv(2, \boldg(2, \emptyset) \backslash (\boldw'' \boldg)(2, \emptyset) )$ equals $\boldg^{-1} \boldv \cup (A \cap (\boldw''\boldg)(2, \emptyset))$ and as $\boldg(2, \emptyset) \cap (\boldw'' \boldg)(2, \emptyset) = \boldg(2, \emptyset) \cap (\boldw'' \boldv)(2, \emptyset)$  clearly  $(A \cap (\boldw''\boldv)(2, \emptyset)) = (A \cap (\boldw''\boldg)(2, \emptyset))$. This proves \eqref{Eqn=EndSet}.

\end{proof}

 Although Coxeter groups generally do not have polynomial growth (nor they are hyperbolic) we still have the polynomial estimate of the following Lemma \ref{Lem=PolynomialBound}.   We do not believe that the degree of the polynomial bound we obtain in Lemma \ref{Lem=PolynomialBound} is optimal, but it suffices for our purposes and it admits a short proof.

\begin{lem}\label{Lem=PolynomialBound}
Let $W$ be a right-angled Coxeter group with finite graph $\Gamma$.
Let $\boldx \in W$. For $a \in \mathbb{N}$ define
\[
\kappa_{\boldx}(a) = \vert \left\{ \boldw \leq \boldx \mid \vert \boldw \vert = a \right\} \vert.
\]
Then $\kappa_{\boldx}(a) \leq C a^{\vert V \Gamma \vert -2}$. Moreover, the constant $C$ can be taken uniformly in $\boldx$.
\end{lem}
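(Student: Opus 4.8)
The plan is to count prefixes of $\boldx$ by recording, for each generator, how many times it occurs. First I would recall that since $W$ is right-angled, any two reduced expressions of an element differ by a sequence of commutations of adjacent commuting generators (the Matsumoto--Tits theorem, noting that for $m(s,t)\in\{2,\infty\}$ the only available braid moves are commutations). Consequently, for each $s\in S=V\Gamma$ the number $k_s(\boldw)$ of occurrences of $s$ in a reduced word for $\boldw$ is well defined and $\sum_{s}k_s(\boldw)=\vert\boldw\vert$. Writing $N=\vert V\Gamma\vert$, this already bounds $\kappa_\boldx(a)$ by the number of tuples $(k_s)_s$ of nonnegative integers with $\sum_s k_s=a$, \emph{provided} the assignment $\boldw\mapsto(k_s(\boldw))_s$ is injective on the set of prefixes of the fixed element $\boldx$.

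To see this injectivity, and later to gain one extra power of $a$, I would use the heap (Viennot/trace-monoid) model of $\boldx$: the positions of a reduced word for $\boldx$ form a labelled poset $P$ in which two positions are comparable whenever their labels coincide or fail to commute, and the prefixes $\boldw\le\boldx$ correspond bijectively to the order ideals (down-closed subsets) of $P$, with $\vert\boldw\vert$ equal to the size of the ideal. The decisive structural fact is that positions carrying the same label are pairwise comparable, hence form a chain $C_s$; so an order ideal $I$ meets $C_s$ in an initial segment, determined by its cardinality $k_s=\vert I\cap C_s\vert$. Thus $I=\bigcup_s(I\cap C_s)$ is recovered from $(k_s)_s$, proving injectivity and the preliminary bound $\kappa_\boldx(a)\le\binom{a+N-1}{N-1}=O(a^{N-1})$. (If one prefers to avoid heaps, both facts can be obtained directly by induction on $\vert\boldx\vert$.)

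The gain of one degree comes from a non-commuting pair. Assuming $\Gamma$ is not complete, fix $s_0,t_0\in S$ with $m(s_0,t_0)=\infty$. Since their labels do not commute, all positions labelled $s_0$ or $t_0$ are pairwise comparable, so $C_{s_0}\cup C_{t_0}$ is a single chain; an order ideal meets it in an initial segment whose numbers of $s_0$- and $t_0$-positions are determined by its total length $k_{s_0}+k_{t_0}$. Hence the map $\boldw\mapsto\big(k_{s_0}(\boldw)+k_{t_0}(\boldw),\,(k_s(\boldw))_{s\neq s_0,t_0}\big)$ is still injective, and its image consists of $(N-1)$-tuples of nonnegative integers summing to $a$. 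Therefore $\kappa_\boldx(a)\le\binom{a+N-2}{N-2}$, a polynomial in $a$ of degree $N-2=\vert V\Gamma\vert-2$ with leading coefficient $1/(N-2)!$ depending only on $N$, giving $\kappa_\boldx(a)\le C\,a^{\vert V\Gamma\vert-2}$ with $C$ uniform in $\boldx$. The remaining case, $\Gamma$ complete, is degenerate: then all generators commute, $W\cong(\mathbb{Z}/2)^N$ is finite and $\vert\boldx\vert\le N$, so $\kappa_\boldx(a)\le\binom{\vert\boldx\vert}{a}\le 2^N$ is bounded and the estimate holds trivially (the value $a=0$, where $\kappa_\boldx(0)=1$, being handled separately or excluded).

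I would expect the main obstacle to be the rigorous justification of the heap correspondence together with the two comparability facts --- that equal labels, and non-commuting labels, force comparability --- rather than the counting itself. The genuinely new point is the observation that a non-commuting pair synchronises its two occurrence-counts into their sum, which is precisely what removes one power of $a$ and produces the exponent $\vert V\Gamma\vert-2$.
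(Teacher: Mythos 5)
Your proof is correct, and it takes a genuinely different route from the paper's. The paper proves a refined estimate $\kappa^{\Lambda}_{\boldx}(a)\leq c(a+k_0)^{\vert V\Lambda\vert-2}$ by induction on $a$: it fixes a reference prefix $\boldw\leq\boldx$ of length $a$, shows (via the normal form theorem) that every other prefix of the same length is obtained from $\boldw$ by deleting a tail and appending a tail of equal size whose letters lie in the link of the deleted part, and then sums the resulting recursion over subgraphs $\Lambda'$ and links $\Link(\Lambda')$, with a somewhat delicate choice of the constants $c$ and $k_0$. You instead count directly: prefixes of $\boldx$ are order ideals of the heap of $\boldx$, each ideal is reconstructed from its occurrence vector $(k_s)_s$ because equal labels form a chain, and the decisive extra degree is gained by observing that a non-commuting pair $s_0,t_0$ forces $C_{s_0}\cup C_{t_0}$ to be a single chain, so only the sum $k_{s_0}+k_{t_0}$ matters. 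This yields the explicit bound $\binom{a+\vert V\Gamma\vert-2}{\vert V\Gamma\vert-2}$ with a constant depending only on $\vert V\Gamma\vert$, no induction, and a transparent explanation of the exponent $\vert V\Gamma\vert-2$ (one degree from the constraint $\sum_s k_s=a$, one from the synchronised pair); the price is importing the heap/trace-monoid formalism, whereas the paper stays entirely within the $\boldv(1,\Lambda)$, $\boldv(2,\Lambda)$ word-combinatorics it has already set up and cites only Green's normal form theorem. Both arguments share the cosmetic defect that the stated bound fails at $a=0$ (where $\kappa_{\boldx}(0)=1$), which you flag and which is harmless for the application in Lemma \ref{Lem=UPol}; your treatment of the degenerate complete-graph case is also fine.
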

\begin{proof}
To do the proof we shall actually count a more refined number. We write $\Lambda \leq \Gamma$ for saying that $\Lambda$ is a complete subgraph of $\Gamma$. We say that $\boldw$ is a $(\leq \Lambda)$-word if its letters (in reduced form) are all in $V\Lambda$ (they do not need to exhaust all of $V\Lambda$); we say that $\boldw$ is a $\Lambda$-word if its letters are exactly $V\Lambda$. Then define
\begin{equation}\label{Eqn=CountKappa}
\kappa_{\boldx}^{\Lambda}(a) = \vert \left\{ \boldv \leq \boldx \mid \vert \boldv \vert = a \textrm{ and } \boldv  \textrm{ is a } (\leq\Lambda)\textrm{-word} \right\} \vert.
\end{equation}

 Let $c$ and $k_0$  be constants such that for $a \in \{ 0, 1\}$ we have for all $\emptyset \not = \Lambda < \Gamma$ we have $ \kappa^{\Lambda}_\boldx(a) \leq c (a+k_0)^{\vert V \Lambda \vert -2 }$ and further for all $a \in \mathbb{N}$ and all non-empty complete subgraphs $\Lambda$ of $\Gamma$ we have  $2^{\vert V \Lambda \vert} ca  \leq  (a+k_0)^{2}$. We prove by induction on $a\in \mathbb{N}$ that for all  $\emptyset \not = \Lambda < \Gamma$ we have $\kappa_{\boldx}^{\Lambda}(a)  \leq c (a +k_0)^{\vert V\Lambda \vert - 2}$. 
 
 \vspace{0.3cm}
 
\noindent  {\it Inductive step.} Pick some fixed $\boldw < \boldx$ with $\vert \boldw \vert = a$ and $\boldw$ a $\Lambda$-word. Now if $\boldv < \boldx$ with $\vert \boldv \vert = a$ then let $\boldv_0$  be an element of maximal length such that both $\boldv_0 \leq \boldv$ and $\boldv_0 \leq \boldw$ (we leave in the middle if $\boldv_0$ is unique). 
 
 Let $s \in S$ be a letter that appears at the start of $\boldv_0^{-1} \boldw$. 
 We claim that  the letter $s$ must commute with $\boldv_0^{-1} \boldv$. Indeed, first observe that
as $\boldv_0$ has maximal length $s$ cannot appear at the start of $\boldv_0^{-1} \boldv$.  
 Further, write $\boldx = \boldv_0 (\boldv_0^{-1} \boldv) (\boldv^{-1} \boldx)$ and $\boldx = \boldv_0 (\boldv_0^{-1} \boldw) (\boldw^{-1} \boldx)$. So, 
 \begin{equation}\label{Eqn=MoveS}
 (\boldv_0^{-1} \boldw) (\boldw^{-1} \boldx) = (\boldv_0^{-1} \boldv) (\boldv^{-1} \boldx).
 \end{equation}
  $s$ appears at the start of  $(\boldv_0^{-1} \boldw)$ and hence this letter must occur somewhere in the expression $(\boldv_0^{-1} \boldv) (\boldv^{-1} \boldx)$ as well. Consider the first occurence of $s$ in  $(\boldv_0^{-1} \boldv) (\boldv^{-1} \boldx)$. All the letters before it must then commute with $s$ as otherwise the equality \eqref{Eqn=MoveS}, telling that $s$ is at the start,  is violated (c.f. the normal form theorem \cite[Theorem 3.9]{Green}). But then $s$ does not occur on $\boldv_0^{-1} \boldv$ as then it is automatically at its start. So the first time $s$ occurs in  $(\boldv_0^{-1} \boldv) (\boldv^{-1} \boldx)$ is in the part  $(\boldv^{-1} \boldx)$ and so it commutes with all elements in  $(\boldv_0^{-1} \boldv)$.

So if $\boldv_0^{-1} \boldw$ is a $\Lambda$-word then $\boldv_0^{-1} \boldv$ is a $\Link(\Lambda)$-word (recall $\Link(\Lambda) = \cap_{s \in V\Lambda} \Link(s)$); in fact it must be a $(\Link(\Lambda) \cap \Lambda)$-word as we only deal with words with letters in $\Lambda$. Moreover $\boldv_0^{-1} \boldv$ must appear at the start of $\boldw^{-1} \boldx$.  So every word in the set we count in \eqref{Eqn=CountKappa} is obtained from $\boldw$ by cutting off a tail (this is $\boldv_0^{-1} \boldw$) and then adding a tail of the same size with commuting letters (this is $\boldv_0^{-1} \boldv$). This certainly gives the inequality,
 \[
 \kappa_\boldx^{\Lambda}(a) \leq \sum_{\Lambda' \leq \Lambda} \sum_{\begin{array}{c} \boldv \leq \boldw \\ \boldv^{-1} \boldw \textrm{ is a } \Lambda'\textrm{-word} \end{array}}    \kappa_{\boldw^{-1} \boldx}^{\Link(\Lambda')} (\vert  \boldv^{-1} \boldw \vert).
 \]
 Note that the number of $\boldv \in W$ with  $\boldv < \boldw, \vert \boldv \vert = l$ and $\boldv$ a $\Lambda'$ -word it is smaller than or equal to $\kappa_{\boldw^{-1}}^{\Lambda'}(\vert \boldw \vert - l)$.  In case $l = 0$ we have $\kappa_{\boldw^{-1}}^{\Lambda'}(\vert \boldw \vert - l) = 1$ (elementary) and in case $l > 0$ we can apply our induction hypothesis to get $\kappa_{\boldw^{-1}}^{\Lambda'}(\vert \boldw \vert - l) \leq c (a - l + k_0)^{\vert V \Lambda' \vert - 2}$. Therefore we get,
\[
\begin{split}
  \kappa_{\boldx}^{\Lambda}(a)
\leq & \sum_{\Lambda' < \Lambda} \sum_{ \begin{array}{c} \boldv < \boldw \\ \boldv^{-1} \boldw \textrm{ is a } \Lambda'\textrm{-word} \end{array} }  c (a + k_0)^{\vert \Link(\Lambda') \cap V\Lambda \vert - 2} \\
\leq & \sum_{\Lambda' < \Lambda} \sum_{l = 0}^{a} c^2 (a-l+k_0)^{\vert V \Lambda' \vert -2} (a + k_0 )^{\vert \Link(\Lambda') \cap V\Lambda \vert - 2} \\
\leq & \sum_{\Lambda' < \Lambda} \sum_{l = 0}^{a} c^2 (a+k_0)^{\vert V \Lambda' \vert -2} (a + k_0 )^{\vert \Link(\Lambda') \cap V\Lambda \vert - 2} 
\end{split}
\]
Since the intersection of each  $V\Lambda'$ and $\Link(\Lambda') \cap V \Lambda$ is empty we find,
\[
\begin{split}
\kappa_{\boldx}^{\Lambda}(a) \leq & \sum_{\Lambda' < \Lambda} \sum_{l = 0}^{a} c^2 (a+k_0)^{\vert V \Lambda \vert -4}  \\
\leq & 2^{\vert V \Gamma \vert} c^2  (a+1)  (a+k_0)^{\vert V \Lambda \vert -4}\\
\leq & c (a+k_0)^{\vert V\Lambda \vert -2}.
\end{split}
\]
The last line follows from the choice of $c$ and $k_0$.

\end{proof}

\subsection{Word length projections}\label{Sect=CutDown}
The aim of this section is to prove that $T_\boldw \mapsto \delta(\vert \boldw \vert \leq n) T_\boldw$ gives a complete bounded multiplier of $\cM_q$ with complete bound growing at most polynomially in $n$. Firstly we simplify notation a little bit.

\begin{rmk}
Note that we may identify $\ell^2(W)$ with basis $\delta_\boldx, \boldx \in W$ with $L^2(\cM_q)$ with basis $T_\boldx \Omega$. This way $T_\boldw^{(1)}$ acts on $\ell^2(W)$ by means of the left regular representation.
\end{rmk}

 We borrow the following construction from \cite{OzawaWeak}. We let $B_f(W)$ be the set of finite subsets of $W$. For $A \in B_f(W)$ we define $\widetilde{\xi}_A^{\pm}$ to be the vectors in $\ell^2(B_f(W))$ given by
 \[
 \widetilde{\xi}_A^{+}(\omega) = \left\{
\begin{array}{ll}
 1 & \textrm{if } \omega \subseteq A, \\
 0 & \textrm{otherwise},
\end{array}
 \right.
\qquad
 \widetilde{\xi}_A^{-}(\omega) = \left\{
 \begin{array}{ll}
 (-1)^{\vert \omega \vert} & \textrm{if } \omega \subseteq A, \\
 0 & \textrm{otherwise},
 \end{array}
 \right.
 \]
Using the binomial formula (see Lemma 4 of \cite{OzawaWeak}), we have $\Vert \widetilde{\xi}_A^{\pm}  \Vert^2 = 2^{\vert A \vert}$ and
\[
\langle \widetilde{\xi}_A^{+}, \widetilde{\xi}_B^{-} \rangle = \left\{
\begin{array}{ll}
0 & A \cap B \not = \emptyset,\\
1 & \textrm{otherwise}.
\end{array}
\right.
\]
 We let
 \begin{equation}\label{Eqn=Rspan}
 \mathcal{R} = {\rm span} \left\{ P_\boldw \mid \boldw \in W \right\}.
  \end{equation}
  Let $Q_\boldw$ be the operator
  \[
  Q_\boldw \delta_\boldx = \delta(\boldw = \boldx) \delta_\boldx,
   \]
   i.e. $Q_\boldw$ is the Dirac delta function at $\boldw$ seen as a multiplication operator.

 \begin{lem}\label{Lem=Aux2}   For $\boldw \in W$ we have $Q_\boldw  =  \sum_{\boldv \in C(\boldw, +)} (-1)^{\vert \boldw^{-1} \boldv \vert} P_\boldv $.
 \end{lem}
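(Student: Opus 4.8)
The plan is to verify the identity coordinatewise in the orthonormal basis $\{\delta_\boldx\}_{\boldx\in W}$ of $\ell^2(W)\cong L^2(\cM_q)$. Both sides are diagonal: by definition $Q_\boldw\delta_\boldx=\delta(\boldw=\boldx)\delta_\boldx$, while $P_\boldv$ is the projection onto $\mathrm{span}\{\delta_\boldx:\boldv\leq\boldx\}$, so $P_\boldv\delta_\boldx=\delta_\boldx$ when $\boldv\leq\boldx$ and $0$ otherwise. For a fixed $\boldx$ only the finitely many $\boldv$ with $\boldw\leq\boldv\leq\boldx$ contribute (these are prefixes of $\boldx$), so the right-hand side converges strongly, and it suffices to prove that for every $\boldx\in W$
\[
c(\boldw,\boldx):=\sum_{\substack{\boldv\in C(\boldw,+)\\ \boldv\leq\boldx}}(-1)^{\vert\boldw^{-1}\boldv\vert}=\delta(\boldw=\boldx).
\]

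First I would reduce the index set. Unwinding $C(\boldw,+)=\bigcup_{\boldw\leq\boldx}C(\boldw,\boldx)$ one checks that $\boldv\in C(\boldw,+)$ exactly when $\boldw\leq\boldv$ and $\boldw^{-1}\boldv$ is a clique word: taking $\boldx=\boldv$ shows any such $\boldv$ occurs, while $\boldw\leq\boldv\leq\boldw\Lambda_{\boldw,\boldx}$ forces $\boldw^{-1}\boldv$ to be an initial subword of the clique word $\Lambda_{\boldw,\boldx}$. If $\boldw\not\leq\boldx$ then no $\boldv$ can satisfy both $\boldw\leq\boldv$ and $\boldv\leq\boldx$, so the sum is empty and both sides vanish. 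If $\boldw\leq\boldx$, set $\boldy=\boldw^{-1}\boldx$; since lengths add along $\boldw\leq\boldv\leq\boldx$, the constraint $\boldv\leq\boldx$ becomes $\boldc:=\boldw^{-1}\boldv\leq\boldy$, with $\vert\boldw^{-1}\boldv\vert=\vert\boldc\vert$, so
\[
c(\boldw,\boldx)=\sum_{\substack{\boldc\leq\boldy\\ \boldc\ \text{a clique word}}}(-1)^{\vert\boldc\vert}.
\]

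The remaining point — and the one requiring the combinatorial input — is to enumerate the clique words $\boldc\leq\boldy$. I claim they correspond bijectively to the subsets of the vertex set of the maximal clique $\Lambda_{\boldw,\boldx}$ occurring at the start of $\boldy=\boldw^{-1}\boldx$. Any clique word $\boldc\leq\boldy$ has pairwise commuting letters appearing at the start of $\boldy$, hence forms a ``head clique'', and by the footnote argument before Lemma \ref{Lem=ExclusionThingy} (two head cliques have pairwise commuting vertices, so their union is again a head clique) its vertex set lies in the maximal one $V\Lambda_{\boldw,\boldx}$; conversely any $A\subseteq V\Lambda_{\boldw,\boldx}$ is itself a clique and yields a clique word $\leq\boldy$ of length $\vert A\vert$. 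Therefore
\[
c(\boldw,\boldx)=\sum_{A\subseteq V\Lambda_{\boldw,\boldx}}(-1)^{\vert A\vert}=(1-1)^{\vert V\Lambda_{\boldw,\boldx}\vert}=0^{\vert V\Lambda_{\boldw,\boldx}\vert}.
\]

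Finally, $V\Lambda_{\boldw,\boldx}=\emptyset$ precisely when $\boldy=e$ (if $\boldy\neq e$ its first letter already gives a nonempty head clique), so this expression equals $1$ when $\boldy=e$ and $0$ otherwise, that is $c(\boldw,\boldx)=\delta(\boldw=\boldx)$, as required. The only genuine obstacle is the bijection of the third paragraph; the ensuing binomial cancellation is exactly the one already recorded for the vectors $\widetilde{\xi}^{\pm}$ from \cite{OzawaWeak}.
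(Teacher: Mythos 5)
Your proof is correct and follows essentially the same route as the paper: evaluate both diagonal operators at each $\boldx$, identify the contributing $\boldv$ with the subwords of the maximal head clique $\Lambda_{\boldw,\boldx}$ of $\boldw^{-1}\boldx$, and conclude by the binomial cancellation $\sum_{A\subseteq V\Lambda}(-1)^{|A|}=0^{|V\Lambda|}$. Your third paragraph just makes explicit the identification of $\{\boldv\in C(\boldw,+):\boldv\le\boldx\}$ with $C(\boldw,\boldx)$, which the paper uses implicitly.
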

 \begin{proof}
 Firstly, $Q_\boldw(\boldw)  =  1  = P_\boldw(\boldw) =  ( \sum_{\boldv \in C(\boldw, +)}  (-1)^{\vert \boldw^{-1} \boldv \vert}  P_\boldv  )(\boldw)$.  Let $\boldx \in W$. If $\boldw \not \leq \boldx$ we get $Q_\boldw(\boldx)  =  0  =  ( \sum_{\boldv \in C(\boldw, +)} (-1)^{\vert \boldw^{-1} \boldv \vert} P_\boldv  )(\boldx)$.
  In case $\boldw < \boldx$ we find
  \begin{equation}  \label{Eqn=ChooseSum}
 \left( \sum_{\boldv \in C(\boldw, + ) } (-1)^{\vert \boldw^{-1} \boldv \vert} P_{\boldv}    \right)(\boldx)
 =
  \sum_{\boldv \in C(\boldw, \boldx) } (-1)^{\vert \boldw^{-1} \boldv \vert},
  \end{equation}
 and this expression equals 0 by the binomial formula. Indeed, let $\Lambda_{\boldw, \boldx}$ be the maximal clique appearing at the start of $\boldw^{-1} \boldx$ (see Section \ref{Sect=CreationAnnihilation}). The number of words smaller than $\Lambda_{\boldw, \boldx}$ of length $l$ is $\vert \Lambda_{\boldw, \boldx} \vert$ choose $l$. So   \eqref{Eqn=ChooseSum} equals,
 \[
 \sum_{l = 0}^{\vert \Lambda_{\boldw, \boldx} \vert } \sum_{\boldv \in C(\boldw, \boldx), \vert \boldw^{-1} \boldv \vert = l} (-1)^{\vert \boldw^{-1} \boldv \vert }
 = \sum_{l = 0}^{\vert \Lambda_{\boldw, \boldx} \vert }  \left( \begin{array}{c} \vert \Lambda_{\boldw, \boldx} \vert \\ l \end{array} \right) (-1)^{\vert \boldw^{-1} \boldv \vert }  = 0.
 \]
  This concludes the lemma.
  \end{proof}

Now let $\mathcal{A}_q$ be the $\ast$-algebra generated by the operators $T_\boldw, \boldw \in W$. So $\cM_q$ is the $\sigma$-weak closure of $\mathcal{A}_q$. We define
\[
\Psi_{\leq n}: \mathcal{A}_q \rightarrow \cM_q: T_\boldw \mapsto \left\{
\begin{array}{ll}
T_\boldw & \vert \boldw \vert \leq n,\\
0 & {\rm otherwise}.
\end{array}
\right.
\]
We also set $\Psi_{n} = \Psi_{\leq n} - \Psi_{\leq (n-1)}$. The crucial part which we need to prove is that $\Psi_{\leq n}$ is completely bounded with a complete bound that can be upper estimated in $n$ polynomially.  In order to do so we first introduce 3   auxiliary maps.

\vspace{0.3cm}

\noindent {\bf Auxiliary map 1.} Recall that $\cM_1$ is just the group von Neumann algebra of the right-angled Coxeter group $W$. For $k \in \mathbb{N}$ define the multiplier  $\mathcal{A}_1 \rightarrow \mathcal{A}_1$,
\[
\rho_k(T_{\boldw}^{(1)})  = \delta(\vert \boldw \vert = k) T_{\boldw}^{(1)}.
\]
This map is completely bounded as the range is finite dimensional. We may extend $\rho_k$ to a $\sigma$-weakly continuous map $\cM_1 \rightarrow \cM_1$ (for convenience of the reader we provided details of this extension trick through double duality in Theorem \ref{Thm=CBAP}).
 By the Bozejko-Fendler Theorem \ref{Thm=BozejkoFendler} we may extend $\rho_k$ uniquely to a $\sigma$-weakly continuous $\ell^\infty(W)$-bimodule map $\cB(\ell^2(W)) \rightarrow \cB(\ell^2(W))$ with the same completely bounded norm. Using Lemma \ref{Lem=TExpansion} we see that
\[
\Psi_{\leq n} = \sum_{k=0}^n  \rho_k \circ  \Psi_{\leq n}.
\]
 We emphasize at this point that in our proofs we shall not need a growth estimate for $\Vert \rho_k \Vert_{\mathcal{CB}}$ in terms of $k$.  It is known however by \cite{Reckwerdt} that $\Vert \rho_k \Vert_{\CB}$ admits a polynomial bound in $k$. In the hyperbolic case this would already follow from \cite[Theorem 1 (2)]{OzawaWeak}.

  Only in the hyperbolic case it is known by \cite[Theorem 1 (2)]{OzawaWeak} that this map is completely bounded and moreover $\Vert \rho_k \Vert_{\CB} \leq C (k + 1)$ for some constant $C$ independent of $k$.

\vspace{0.3cm}

\noindent {\bf Auxiliary map 2.}  Let $\mathbb{T}$ be the unit circle in $\mathbb{C}$.  For $z \in \mathbb{T}$ we define a unitary map,
\[
A_z: \ell^2(W) \rightarrow \ell^2(W):  \delta_\boldw  \mapsto z ^{\vert \boldw \vert} \delta_\boldw.
\]
We set for $i \in \mathbb{Z}$,
\[
\Phi_i: \cB( \ell^2(W) ) \rightarrow \cB( \ell^2(W)):  x \mapsto   \int_{\mathbb{T}} z^{-i}   A_{z}^\ast x A_{z}   dz,
\]
where the measure is the normalized Lebesgue measure on $\mathbb{T}$. Intuitively $\Phi_i$ cuts out the operators that create $i$ more letters than it annihilates (where a negative creation is an annihilation).    Using Lemma \ref{Lem=TExpansion} we see that
\[
\Psi_{\leq n} =  \sum_{i=-n}^n \Phi_i  \circ \Psi_{\leq n}.
\]

\vspace{0.3cm}

\noindent {\bf Auxiliary map 3.} Assume that $\Gamma$ is finite. For $a \in \mathbb{N}$ we define  Stinespring dilations,
\begin{equation}\label{Eqn=Dilate}
U_{a}^\pm: \ell^2(W) \rightarrow \ell^2(W) \otimes \ell^2(W) \otimes \ell^2(W) \otimes \ell^2(B_f(W)),
\end{equation}
by mapping $\delta_\boldx$ to (see Section \ref{Sect=CreationAnnihilation} for notation),
\[
\sum_{\boldg \leq \boldx} \sum_{\Lambda \leq \boldg(2, \emptyset)} \beta^{\pm}_{\boldg, \boldx, \Lambda, a} \delta_{\boldg} \otimes\delta_{\boldg^{-1} \boldx}   \otimes     \delta_{\boldg(2, \Lambda)}   \otimes \widetilde{\xi}_{\Lambda}^{\pm}.
\]
Here,
\begin{equation}\label{Eqn=Beta}
\beta^{+}_{\boldg, \boldx, \Lambda, a} = \sum_{\boldv \in C(\boldg, \boldx)} (-1)^{\vert \boldg^{-1} \boldv \vert}  F_{\Lambda,a}(\boldv),
 \end{equation}
 where $F_{\Lambda,a}(\boldv) =1$ if
 \[
2 \vert \boldv(1, \Lambda) \vert + \vert \boldv(2, \Lambda) \vert \leq a,
 \]
 and else $F_{\Lambda,a}(\boldv) = 0$. We let $\beta^{-}_{\boldg, \boldx, \Lambda, a} = 1$ if $\beta^{+}_{\boldg, \boldx, \Lambda, a} \not = 0$ and  $\beta^{-}_{\boldg, \boldx, \Lambda, a} = 0$ otherwise.  Then set,
\begin{equation}\label{Eqn=SigmaAB}
 \sigma_{a,b}(x) = (U^-_{a})^\ast (x \otimes 1 \otimes 1 \otimes 1) U^+_{b}.
\end{equation}
The map $U_a^{\pm}$ is bounded with polynomial bound in $a$ by the following lemma.

\begin{lem}\label{Lem=UPol}
If $\Gamma$ is finite, the map $U_a^{\pm}$ is bounded. Moreover, there exists a polynomial $P$ such that $\Vert U_{a}^{\pm} \Vert \leq P(a)$.
\end{lem}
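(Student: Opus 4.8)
The plan is to exploit two orthogonality phenomena so that the operator norm collapses to a diagonal sum of squares, and then to bound that sum via the uniform polynomial count of Lemma~\ref{Lem=PolynomialBound}. First I would record that $U_a^{\pm}$ sends distinct basis vectors to orthogonal vectors: given $\boldg \leq \boldx$, the pair $(\boldg, \boldg^{-1}\boldx)$ recovers $\boldx$ through $\boldx = \boldg (\boldg^{-1}\boldx)$, so if $\boldx \neq \boldx'$ then every summand of $U_a^{\pm}\delta_\boldx$ is orthogonal to every summand of $U_a^{\pm}\delta_{\boldx'}$ already in the first two tensor legs, giving $\langle U_a^{\pm}\delta_\boldx, U_a^{\pm}\delta_{\boldx'}\rangle = 0$ and hence $\Vert U_a^{\pm}\Vert = \sup_\boldx \Vert U_a^{\pm}\delta_\boldx\Vert$. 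For fixed $\boldx$ the summands indexed by $(\boldg, \Lambda)$ are themselves mutually orthogonal: distinct $\boldg$ are separated by the first leg $\delta_\boldg$, while for fixed $\boldg$ the map $\Lambda \mapsto \boldg(2,\Lambda) = \boldg(2,\emptyset)\backslash \Lambda$ is injective on subcliques of $\boldg(2,\emptyset)$, so distinct $\Lambda$ are separated by the third leg $\delta_{\boldg(2,\Lambda)}$. Using $\Vert \widetilde{\xi}_\Lambda^{\pm}\Vert^2 = 2^{\vert \Lambda\vert}$ this yields
\begin{equation}
\Vert U_a^{\pm}\delta_\boldx\Vert^2 = \sum_{\boldg \leq \boldx}\ \sum_{\Lambda \leq \boldg(2,\emptyset)} \left( \beta^{\pm}_{\boldg, \boldx, \Lambda, a}\right)^2\, 2^{\vert \Lambda\vert}.
\end{equation}

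Next I would bound the right-hand side using finiteness of $\Gamma$; write $N = \vert V\Gamma\vert$. Each clique has at most $N$ vertices, so for fixed $\boldg$ there are at most $2^N$ admissible $\Lambda$ and $2^{\vert \Lambda\vert}\leq 2^N$. Each coefficient is bounded by a constant: $C(\boldg, \boldx)$ consists of the $\boldv$ with $\boldg \leq \boldv \leq \boldg \Lambda_{\boldg, \boldx}$, so $\boldg^{-1}\boldv$ ranges over subcliques of $\Lambda_{\boldg, \boldx}$, whence $\vert C(\boldg, \boldx)\vert \leq 2^N$ and therefore $\vert \beta^{+}_{\boldg, \boldx, \Lambda, a}\vert \leq 2^N$ (and $\vert \beta^{-}_{\boldg, \boldx, \Lambda, a}\vert \leq 1$ by definition). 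Thus every summand is bounded by a constant depending only on $\Gamma$, and the only quantity left to control is the number of $\boldg$ contributing a nonzero summand.

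The key point is that a nonzero coefficient forces $\boldg$ to be short. Indeed, $\beta^{\pm}_{\boldg, \boldx, \Lambda, a} \neq 0$ requires $F_{\Lambda, a}(\boldv) = 1$ for some $\boldv \in C(\boldg, \boldx)$, that is $2\vert \boldv(1,\Lambda)\vert + \vert \boldv(2,\Lambda)\vert \leq a$; since $\vert \boldv\vert = \vert \boldv(1,\Lambda)\vert + \vert \boldv(2,\Lambda)\vert$ this gives $\vert \boldv\vert \leq a$, and as $\boldg \leq \boldv$ we get $\vert \boldg\vert \leq a$. Hence the outer sum runs effectively over $\{\boldg \leq \boldx : \vert \boldg\vert \leq a\}$, of cardinality $\sum_{b=0}^{a}\kappa_\boldx(b)$. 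By Lemma~\ref{Lem=PolynomialBound} this is at most $\sum_{b=0}^{a} C\, b^{N-2} \leq C'(a+1)^{N-1}$, uniformly in $\boldx$. Combining the three estimates gives $\Vert U_a^{\pm}\delta_\boldx\Vert^2 \leq 2^N \cdot 2^N \cdot 4^N \cdot C'(a+1)^{N-1}$, so $\Vert U_a^{\pm}\Vert \leq P(a)$ for a polynomial $P$ of degree $(N-1)/2$, as desired. The only genuinely structural step is the pair of orthogonality observations; once the norm collapses to the diagonal sum of squares, the bound is a routine application of the uniform count of Lemma~\ref{Lem=PolynomialBound} together with the clique-size estimate $\vert C(\boldg, \boldx)\vert \leq 2^{\vert V\Gamma\vert}$, so I do not expect a serious obstacle beyond bookkeeping.
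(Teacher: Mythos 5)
Your proof is correct and follows essentially the same route as the paper: orthogonality of the images of distinct $\delta_{\boldx}$ (read off from the first two tensor legs), a uniform bound $\vert\beta^{\pm}_{\boldg,\boldx,\Lambda,a}\vert\leq 2^{\vert V\Gamma\vert}$ coming from $\vert C(\boldg,\boldx)\vert\leq 2^{\vert V\Gamma\vert}$, and a polynomial count of the contributing $\boldg$ via Lemma \ref{Lem=PolynomialBound}. The only cosmetic differences are that you exploit full orthogonality of the summands (the paper just uses the triangle inequality) and that you retain only the upper cutoff $\vert\boldg\vert\leq a$, whereas the paper also discards short $\boldg$ by the binomial formula; neither affects the polynomial bound.
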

\begin{proof}
It follows by a comparison of the first two tensor legs in the definition of $U_a^{\pm}$ that the images of $\delta_{\boldx}, \boldx \in W$ are orthogonal vectors. Therefore it suffices to show that $\sup_{\boldx \in W} \Vert U_a^{\pm} \delta_{\boldx} \Vert$ is bounded polynomially. Now let $C = \sum_{\Lambda \in \Cliq(\Gamma)}  2^{\frac{1}{2} \vert \Lambda\vert}$. Then
\begin{equation}\label{Eqn=JosEnEdgar}
\begin{split}
 \Vert U_a \delta_{\boldx} \Vert  = &  \Vert
\sum_{\boldg \leq \boldx} \sum_{\Lambda \leq \boldg(2, \emptyset)} \beta^{\pm}_{\boldg, \boldx, \Lambda, a} \delta_{\boldg} \otimes\delta_{\boldg^{-1} \boldx}   \otimes     \delta_{\boldg(2, \Lambda)}   \otimes \widetilde{\xi}_{\Lambda}^{\pm}.
\Vert \\
\leq &
\sum_{\boldg \leq \boldx} \sum_{\Lambda \leq \boldg(2, \emptyset)}   \vert \beta^{\pm}_{\boldg, \boldx, \Lambda, a} \vert 2^{\frac{1}{2} \vert \Lambda \vert} \\
\leq & C \sum_{\boldg \leq \boldx}   \max_{\Lambda \in \Cliq(\Gamma)} \vert \beta^{\pm}_{\boldg, \boldx, \Lambda, a} \vert.
\end{split}
\end{equation}
In case
\begin{equation}\label{Eqn=AEstimateI}
a \leq 2 \vert \boldg(1, \Lambda) \vert + \vert \boldg(2, \Lambda) \vert,
\end{equation}
 then $\beta^{\pm}_{\boldg, \boldx, \Lambda, a} = 0$ by definition. \eqref{Eqn=AEstimateI} will certainly hold when $a \leq  \vert\boldg\vert$.
  Let $M$ be the maximum length of a clique in $\Cliq(\Gamma)$. Then if
  \begin{equation}\label{Eqn=AEstimateII}
  2 \vert \boldg(1, \Lambda) \vert + \vert \boldg(2, \Lambda) \vert \leq a -2M -1,
  \end{equation}
   we find that $\beta^{\pm}_{\boldg, \boldx, \Lambda, a} = 0$ by the binomial formula as for every $\boldv \in C(\boldg, \boldx)$ we have $F_{\Lambda, a}(\boldv) = 1$. \eqref{Eqn=AEstimateII} will certainly hold if $2 \vert \boldg \vert \leq a - 2M -1$.
       So \eqref{Eqn=JosEnEdgar} can be estimated by $C$ times the number of $\boldg \leq \boldx$ with
    \[
 \frac{1}{2}(a - 2 M - 1) \leq     \vert \boldg \vert \leq a.
    \]
But the number such $\boldg$'s grows polynomially in $a$, c.f. Lemma \ref{Lem=PolynomialBound}.
\end{proof}

\begin{lem}\label{Lem=Aux}
Let $\boldx \in W$. Let $\boldu', \boldu''\in W$ be such that $\vert \boldu'' \boldx \vert = \vert \boldx \vert - \vert \boldu'' \vert$, $\vert \boldu' \boldu'' \boldx \vert = \vert \boldx \vert - \vert \boldu''\vert + \vert \boldu' \vert$. Let $\boldv \in W$ be such that $(\boldu'')^{-1} \leq \boldv \leq \boldx$. Then,
\begin{equation}\label{Eqn=Sum}
\begin{split}
&   \sum_{\boldv \leq \boldg \leq \boldx} \beta^{+}_{\boldg, \boldx, \boldg(2, \emptyset) \backslash (\boldu'\boldu''\boldg)(2, \emptyset), a} \beta^{-}_{\boldu'\boldu''\boldg, \boldu'\boldu''\boldx, (\boldu'\boldu''\boldg)(2, \emptyset) \backslash \boldg(2, \emptyset), a - 2\vert \boldu' \vert + 2\vert \boldu'' \vert}\\
= &
\left\{
\begin{array}{ll}
 1 & \textrm{in case } 2 \vert \boldv(1,  \boldv(2, \emptyset) \backslash (\boldu' \boldu''\boldv)(2, \emptyset)  ) \vert + \vert \boldv(2,  \boldv(2, \emptyset) \backslash (\boldu' \boldu''\boldv)(2, \emptyset)  ) \vert \leq a,\\
 0 & \textrm{otherwise.}
\end{array}
\right.
\end{split}
\end{equation}
\end{lem}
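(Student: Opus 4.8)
The plan is to read \eqref{Eqn=Sum} as an inclusion--exclusion identity and to reduce \emph{both} factors to data intrinsic to $\boldv$, so that the summation over $\boldg$ collapses by the binomial formula exactly as in the proof of Lemma \ref{Lem=Aux2}. Throughout I set $\boldw = \boldu' \boldu''$ and observe that the hypotheses $\vert \boldu'' \boldx \vert = \vert \boldx \vert - \vert \boldu'' \vert$ and $\vert \boldu' \boldu'' \boldx \vert = \vert \boldx \vert - \vert \boldu'' \vert + \vert \boldu' \vert$ make this precisely the decomposition $\boldw = \boldw' \boldw''$ with $\boldw' = \boldu'$, $\boldw'' = \boldu''$ demanded in Lemmas \ref{Lem=ExclusionThingy} and \ref{Lem=Tediously}. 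Moreover $(\boldu'\boldu''\boldg)^{-1}(\boldu'\boldu''\boldx) = \boldg^{-1}\boldx$, so that $C(\boldu'\boldu''\boldg, \boldu'\boldu''\boldx) = \boldu'\boldu'' \, C(\boldg, \boldx)$ and the alternating signs attached to the second factor match those of the first. This shared index set is what ultimately lets the two $\beta$'s be compared summand by summand.

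First I would simplify the right-hand side. Applying Lemma \ref{Lem=Tediously}(1) with $\boldv$ in place of $\boldg$ (legitimate as $(\boldu'')^{-1} \leq \boldv$) gives $\boldv(2,\emptyset) \backslash (\boldu'\boldu''\boldv)(2,\emptyset) = \boldv(2,\emptyset) \backslash (\boldu''\boldv)(2,\emptyset) =: \Lambda$, a clique intrinsic to $\boldv$. Hence the condition governing the right-hand side of \eqref{Eqn=Sum} is exactly $F_{\Lambda, a}(\boldv) = 1$ in the notation of \eqref{Eqn=Beta}. The target is therefore to show that the displayed $\boldg$-sum equals $F_{\Lambda, a}(\boldv)$.

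Next I would expand each $\beta^{\pm}$ through its definition \eqref{Eqn=Beta} and use the combinatorial lemmas to transport the twisted quantities back to untwisted ones. For the running element of $C(\boldg, \boldx)$, the relations \eqref{Eqn=LaChouffe3} and \eqref{Eqn=LaChouffe4}, together with the label identification $\boldg(2,\emptyset) \backslash (\boldu'\boldu''\boldg)(2,\emptyset) = \boldg(2,\emptyset) \backslash (\boldu''\boldg)(2,\emptyset) =: \Lambda_1$ from Lemma \ref{Lem=Tediously}(1), convert the weight $2\vert (\boldu'\boldu''\boldv)(1,\Lambda_2) \vert + \vert (\boldu'\boldu''\boldv)(2,\Lambda_2) \vert$ attached to the clique $\Lambda_2 = (\boldu'\boldu''\boldg)(2,\emptyset) \backslash \boldg(2,\emptyset)$ into $2\vert \boldv(1,\Lambda_1) \vert + \vert \boldv(2,\Lambda_1) \vert$ up to the explicit additive shift $2\vert \boldu' \vert - 2\vert \boldu'' \vert$. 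The parameter $a - 2\vert \boldu' \vert + 2\vert \boldu'' \vert$ carried by the $\beta^{-}$ factor is arranged to absorb exactly this shift, so that the indicator $\beta^{-}$ records precisely whether the single constraint $\boldv \mapsto 2\vert \boldv(1,\Lambda_1) \vert + \vert \boldv(2,\Lambda_1) \vert$ is non-constant on $C(\boldg,\boldx)$, i.e.\ whether $\beta^{+}_{\boldg,\boldx,\Lambda_1,a}$ sits at a ``boundary'' of the monotone cut-off $F$. With both factors expressed through one and the same monotone constraint on $\boldv$, the product $\beta^{+}\beta^{-}$ summed over $\boldv \leq \boldg \leq \boldx$ becomes a signed binomial sum, and the binomial cancellation already used in Lemma \ref{Lem=Aux2} makes the interior contributions telescope, leaving only the base term at $\boldg = \boldv$, which equals $F_{\Lambda, a}(\boldv)$.

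The main obstacle is the exact synchronisation of the two factors: one must verify that $\Lambda_1(\boldg)$ and $\Lambda_2(\boldg)$ are genuine complements inside $\boldg(2,\emptyset) \cup (\boldu'\boldu''\boldg)(2,\emptyset)$, that the length shift in \eqref{Eqn=LaChouffe4} carries the sign making the parameter choice $a - 2\vert \boldu' \vert + 2\vert \boldu'' \vert$ the unique one aligning the twisted and untwisted cut-offs, and consequently that $\beta^{-}$ is truly the indicator of $\beta^{+} \neq 0$ for the \emph{common} constraint so that the telescoping is exact. The remaining care is routine bookkeeping: keeping the head, diagonal, and tail blocks minimal so that the permutation used in the definition of the maps stays well defined, and treating the degenerate endpoint $\boldg = \boldx$ (where $C(\boldg,\boldx)$ is a single point and no cancellation occurs) as a separate boundary case.
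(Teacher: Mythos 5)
Your plan is correct and follows essentially the same route as the paper's proof: you use Lemma \ref{Lem=ExclusionThingy} (equations \eqref{Eqn=LaChouffe3}--\eqref{Eqn=LaChouffe4}) to identify the shifted $\beta^{+}$ with the untwisted one so that $\beta^{+}\beta^{-}=\beta^{+}$, collapse the remaining sum over $\boldg$ by the binomial cancellation of Lemma \ref{Lem=Aux2} to the single term at $\boldg=\boldv$, and identify the resulting condition via Lemma \ref{Lem=Tediously} — exactly the paper's argument, which packages the collapse as $\kappa_a\bigl(\sum_{\boldv\leq\boldg\leq\boldx}Q_\boldg\bigr)(\boldx)=\kappa_a(P_\boldv)(\boldx)$. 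The verifications you defer are precisely the computations the cited lemmas supply, so there is no gap.
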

\begin{proof}
By Equations \eqref{Eqn=LaChouffe3} and \eqref{Eqn=LaChouffe4} for $\boldv \leq \boldg$ we get,
\[
\begin{split}
   & \beta^{+}_{\boldg, \boldx, \boldg(2, \emptyset) \backslash (\boldu'\boldu''\boldg)(2, \emptyset), a}\\
=  & \sum_{\boldw \in C(\boldg, \boldx)} (-1)^{\vert \boldg^{-1} \boldv \vert}  F_{\boldg(2, \emptyset) \backslash (\boldu'\boldu''\boldg)(2, \emptyset), a}(\boldw) \\
=  & \sum_{\boldw \in C(\boldu'\boldu''\boldg, \boldu'\boldu''\boldx) }  (-1)^{\vert \boldg^{-1} \boldv \vert} F_{ (\boldu'\boldu''\boldg)(2, \emptyset) \backslash \boldg(2, \emptyset), a - 2\vert \boldu'' \vert + 2\vert \boldu'\vert}(\boldw) \\
=  & \beta^{+}_{\boldu'\boldu''\boldg, \boldu'\boldu''\boldx, (\boldu'\boldu''\boldg)(2, \emptyset) \backslash \boldg(2, \emptyset), a - 2\vert \boldu' \vert + 2\vert \boldu'' \vert}.
\end{split}
\]
Therefore also,
\[
\beta^{-}_{\boldg, \boldx, \boldg(2, \emptyset) \backslash (\boldu'\boldu''\boldg)(2, \emptyset), a}
= \beta^{-}_{\boldu'\boldu''\boldg, \boldu'\boldu''\boldx, (\boldu'\boldu''\boldg)(2, \emptyset) \backslash \boldg(2, \emptyset), a - 2\vert \boldu' \vert + 2\vert \boldu'' \vert}.
\]
We therefore have that the left hand side of \eqref{Eqn=Sum} equals,
\[
 \sum_{\boldv \leq \boldg \leq \boldx} \beta^{+}_{\boldg, \boldx, \boldg(2, \emptyset) \backslash (\boldu'\boldu''\boldg)(2, \emptyset), a} \beta^{-}_{\boldg, \boldx, \boldg(2, \emptyset) \backslash (\boldu'\boldu''\boldg)(2, \emptyset), a} =
 \sum_{\boldv \leq \boldg \leq \boldx} \beta^{+}_{\boldg, \boldx, \boldg(2, \emptyset) \backslash (\boldu'\boldu''\boldg)(2, \emptyset), a}
\]
To compute this sum, recall that $\mathcal{R}$ was defined in \eqref{Eqn=Rspan}, and define the mapping
\[
\kappa_a: \mathcal{R} \rightarrow \mathcal{R}: P_{\boldw}\mapsto F_{\boldw(2, \emptyset) \backslash (\boldu'\boldu''\boldw)(2, \emptyset), a}(\boldw) P_{\boldw}.
\]
Then, using Lemma \ref{Lem=Aux2},  the definition of $\kappa_a$, Lemma \ref{Lem=Tediously}, and the definition \eqref{Eqn=Beta},
\[
\begin{split}
\kappa_a(Q_\boldg)(\boldx) = &  \kappa_a(\sum_{\boldw \in C(\boldg, \boldx)} (-1)^{\vert \boldg^{-1} \boldw \vert} P_{\boldw} )(\boldx)  \\
= &  \sum_{\boldw \in C(\boldg, \boldx)} (-1)^{\vert \boldg^{-1} \boldw \vert} F_{\boldw(2, \emptyset) \backslash (\boldu'\boldu''\boldw)(2, \emptyset), a}(\boldw)  \\
= & \beta^{+}_{\boldg, \boldx, \boldg(2, \emptyset) \backslash (\boldu'\boldu''\boldg)(2, \emptyset), a}.
\end{split}
\]
 As $\sum_{\boldv \leq \boldg \leq \boldx} Q_\boldg$ can be written as $P_\boldv$ plus projections in $\mathcal{R}$ that are not supported at $\boldx$ we see therefore that,
\[
\begin{split}
 \sum_{\boldv \leq \boldg \leq \boldx} \beta^{+}_{\boldg, \boldx, \boldg(2, \emptyset) \backslash (\boldu'\boldu''\boldg)(2, \emptyset), a}  = &  \sum_{\boldv \leq \boldg \leq \boldx}\kappa_a(Q_\boldg)(\boldx)
 =   \kappa_a(P_\boldv)(\boldx).
\end{split}
\]
This expression equals 1 if $F_{\boldv(2, \emptyset) \backslash (\boldu'\boldu''\boldv)(2, \emptyset), a}(\boldv) = 1$ and 0 otherwise which corresponds exactly to the statement of the lemma.

\end{proof}

\begin{lem} \label{Lem=FirstTediousLemma}
Assume that $\Gamma$ is finite so that \eqref{Eqn=SigmaAB} is defined boundedly. We have for $n \in \mathbb{N}$ that $\Psi_{\leq n} =     \sum_{i = -n}^n   \sigma_{   n-i,   n+i  } \circ \Phi_{i}  \circ \Psi_{\leq n}$.
\end{lem}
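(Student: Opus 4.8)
The plan is to verify the identity of linear maps on the $\ast$-algebra $\mathcal{A}_q$ by testing it on the spanning set $\{T_\boldw : \boldw \in W\}$. Since $\Psi_{\leq n}$ is the innermost factor on both sides and kills $T_\boldw$ whenever $\vert \boldw \vert > n$, both sides vanish on such generators, so only the case $\vert \boldw \vert \leq n$ requires work; there $\Psi_{\leq n}(T_\boldw) = T_\boldw$ and I must show $\sum_{i=-n}^n \sigma_{n-i,n+i}(\Phi_i(T_\boldw)) = T_\boldw$. Using Lemma \ref{Lem=TExpansion} together with the already-established decomposition $\Psi_{\leq n} = \sum_{i=-n}^n \Phi_i \circ \Psi_{\leq n}$, I would write $T_\boldw = \sum_{i=-n}^n \Phi_i(T_\boldw)$, where the net-creation-$i$ component is the finite sub-sum $\Phi_i(T_\boldw) = \sum p^{\# V\Gamma_0} T_{\boldw'}^{(1)} P_{V\Gamma_0} T_{\boldw''}^{(1)}$ taken over those $(\boldw',\Gamma_0,\boldw'') \in A_\boldw$ with $\vert \boldw' \vert - \vert \boldw'' \vert = i$. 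It then suffices to prove that for each $i$ the map $\sigma_{n-i,n+i}$ fixes $\Phi_i(T_\boldw)$ when $\vert \boldw \vert \leq n$.

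To that end I would treat a single summand, first bringing it to the normal form $T_{\boldu'}^{(1)} P_{\boldu V\Gamma_0} T_{\boldu''}^{(1)}$ of Lemma \ref{Lem=BreakDown}, noting that $\vert \boldu' \vert - \vert \boldu'' \vert = \vert \boldw' \vert - \vert \boldw'' \vert = i$. I would then evaluate $\sigma_{n-i,n+i}(\,\cdot\,) = (U_{n-i}^-)^\ast(\,\cdot \otimes 1 \otimes 1 \otimes 1)U_{n+i}^+$ on a basis vector $\delta_\boldx$ by unwinding the definitions \eqref{Eqn=Dilate} of $U_a^\pm$ and \eqref{Eqn=Beta} of $\beta^\pm$. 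Comparison of the first three tensor legs, together with the pairing $\langle \widetilde{\xi}^+,\widetilde{\xi}^-\rangle$ that enforces clique-disjointness, forces the output to be supported exactly on $\delta_{\boldu'\boldu''\boldx}$, i.e. on the same matrix entries where the operator $T_{\boldu'}^{(1)} P_{\boldu V\Gamma_0} T_{\boldu''}^{(1)}$ is non-zero, so all off-support entries agree automatically. On the support, the surviving coefficient is precisely the double sum $\sum \beta^+\beta^-$ of Lemma \ref{Lem=Aux}, whose lower summation limit is the fixed minimal word $\boldv = (\boldu'')^{-1}$.

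The crux is the index bookkeeping. The factor $\beta^+$ comes from $U_{n+i}^+$ and so carries parameter $n+i$, while $\beta^-$ comes from $U_{n-i}^-$ and carries parameter $n-i$; since $i = \vert \boldu' \vert - \vert \boldu'' \vert$ one has $(n+i)-(n-i) = 2(\vert \boldu' \vert - \vert \boldu'' \vert)$, which is exactly the shift that allows Lemma \ref{Lem=Aux} to be applied with its parameter $a = n+i$. That lemma collapses the double sum to the indicator of the condition $2\vert \boldv(1,\cdot)\vert + \vert \boldv(2,\cdot)\vert \leq n+i$ evaluated at $\boldv = (\boldu'')^{-1}$; because this word is bounded independently of $\boldx$, the condition is a statement about $\boldw$ alone, and matching the clique data through Lemmas \ref{Lem=ExclusionThingy} and \ref{Lem=Tediously} it reduces to $\vert \boldw \vert = \vert \boldw' \vert + \vert V\Gamma_0 \vert + \vert \boldw'' \vert \leq n$. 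As we are in the case $\vert \boldw \vert \leq n$, this indicator equals $1$ throughout the support, so $\sigma_{n-i,n+i}$ reproduces the summand verbatim; summing over the summands of $\Phi_i(T_\boldw)$ and then over $i$ recovers $T_\boldw$. I expect the main obstacle to be exactly this last reduction: one must check that the two Stinespring dilations decompose the input word $\boldx$ and the output word $\boldu'\boldu''\boldx$ into compatible head/clique/tail pieces under the permutation $\sigma$, and that the weighted-length functional $2\vert \boldv(1,\cdot)\vert + \vert \boldv(2,\cdot)\vert$ genuinely collapses to the honest word length $\vert \boldw \vert$ — which is precisely the role for which the combinatorial Lemmas \ref{Lem=ExclusionThingy}--\ref{Lem=Aux} were prepared.
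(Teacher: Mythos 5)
Your proposal follows the paper's proof essentially step for step: reduce to the generators $T_\boldw$ with $\vert\boldw\vert\le n$, expand via Lemmas \ref{Lem=TExpansion} and \ref{Lem=BreakDown}, compute the matrix coefficients $\langle\,\cdot\,\delta_\boldx,\delta_\boldy\rangle$ of the Stinespring dilations so that comparison of the tensor legs forces $\boldy=\boldu'\boldu''\boldx$, pins down $i$ and the cliques $\Lambda,\Lambda'$, and finally collapse the resulting sum $\sum\beta^+\beta^-$ via Lemma \ref{Lem=Aux} to the indicator of $\vert\boldw\vert\le n$, exactly as in \eqref{Eqn=House}. The one imprecision is that the base point of that sum is $\boldv=(\boldu'')^{-1}\boldu V\Gamma_0$ rather than $(\boldu'')^{-1}$ (this is what makes the weighted length equal $2\vert\boldw''\vert+\vert V\Gamma_0\vert=\vert\boldw\vert-i$ and the comparison with $n-i$ come out to $\vert\boldw\vert\le n$), but this does not affect the structure of the argument.
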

\begin{proof}
Let $T_\boldw \in \cM_q$ with $\vert \boldw \vert \leq n$. We need to show that,
\[
T_\boldw  =  \sum_{k=0}^{n} \sum_{i = -n}^n   \sigma_{   n-i,   n+i  } \circ \Phi_{i} \circ \rho_k(T_\boldw).
\]
We split by Lemma \ref{Lem=TExpansion},
\[
T_\boldw =   \sum_{(\boldw', \Gamma_0, \boldw'') \in A_\boldw} T_{\boldw'}^{(1)} P_{V\Gamma_0} T_{\boldw''}^{(1)},
\]
and show that $ \sum_{k=0}^{n} \sum_{i = -n}^n   \sigma_{   n-i,   n+i  } \circ \Phi_{i} \circ \rho_k $ applied to each of these summands acts as the identity.
Let us consider a summand $T_{\boldw'}^{(1)} P_{V\Gamma_0} T_{\boldw''}^{(1)}$ with $(\boldw', \Gamma_0, \boldw'')\in A_{\boldw}$. Let $\boldu, \boldu', \boldu''$ be as in Lemma \ref{Lem=BreakDown} so that $T_{\boldw'}^{(1)} P_{V\Gamma_0} T_{\boldw''}^{(1)} = T_{\boldu'}^{(1)} P_{\boldu V\Gamma_0} T_{\boldu''}^{(1)}$. We have
\[
\rho_k(T_{\boldu'}^{(1)} P_{\boldu V\Gamma_0} T_{\boldu''}^{(1)}) = \left\{
\begin{array}{ll}
T_{\boldu'}^{(1)} P_{\boldu V\Gamma_0} T_{\boldu''}^{(1)} & \textrm{if } k = \vert \boldu'\vert + \vert \boldu''\vert, \\
0 & \textrm{otherwise.}
\end{array}
\right.
\]
So the only non-zero summand is $k = \vert \boldu' \vert + \vert \boldu'' \vert$ so that it remains to show that for $\boldx, \boldy \in W$,
\begin{equation}\label{Eqn=MoonenIsAnAsshol}
\langle
\sum_{i = -n}^n   \sigma_{   n-i,   n+i  } \circ \Phi_{i}(    T_{\boldu'}^{(1)} P_{\boldu V\Gamma_0} T_{\boldu''}^{(1)}   ) \delta_{\boldx}, \delta_{\boldy}
\rangle = \langle
    T_{\boldu'}^{(1)} P_{\boldu V\Gamma_0} T_{\boldu''}^{(1)}   \delta_{\boldx}, \delta_{\boldy}
\rangle.
\end{equation}
If the right hand side is non-zero then we must have $\boldy = \boldu' \boldu'' \boldx$. Furthermore, recall that there is a choice for $\boldu', \boldu''$ and we may choose them (depending on $\boldx$) such that  $\vert \boldu'' \boldx \vert = \vert \boldx \vert - \vert \boldu'' \vert$ and $\vert \boldu' \boldu'' \boldx \vert = \vert \boldx \vert - \vert \boldu'' \vert + \vert \boldu'\vert$. After making this choice the right hand side is non-zero in case $(\boldu'')^{-1} \boldu V\Gamma_0  \leq \boldx$, in which case the expression equals 1.

Now consider the left hand side of \eqref{Eqn=MoonenIsAnAsshol},
 \begin{equation}\label{Eqn=BigClaim}
 \begin{split}
 &  \langle    (   \Phi_i(T_{\boldu'}^{(1)} P_{\boldu V\Gamma_0} T_{\boldu''} ^{(1)}) \otimes 1  \otimes 1 \otimes 1 ) U^+_{ n-i }  \delta_{\boldx}, U^-_{  n+i}  \delta_{\boldy}   \rangle \\
   = & \langle \sum_{\boldg \leq \boldx} \sum_{\Lambda \leq \boldg(2, \emptyset)} \beta^+_{\boldg, \boldx, \Lambda, n-i } \Phi_i( T_{\boldu'}^{(1)} P_{\boldu V\Gamma_0} T_{\boldu''} ^{(1)} ) \delta_\boldg \otimes \delta_{\boldg^{-1} \boldx} \otimes  \delta_{\boldg(2, \Lambda)} \otimes \widetilde{\xi}^+_{\Lambda}, \\
   & \qquad \sum_{\boldh \leq \boldy} \sum_{\Lambda' \leq \boldh(2, \emptyset)} \beta^-_{\boldh, \boldy, \Lambda', n+i } \delta_{\boldh}  \otimes \delta_{\boldh^{-1} \boldy} \otimes \delta_{\boldh(2, \Lambda')} \otimes \widetilde{\xi}_{\Lambda}^- \rangle.
   \end{split}
   \end{equation}
Comparing the first two tensor legs of this equation we derive the following. The only summands that are non-zero are the ones where $\boldu' \boldu'' \boldg = \boldh$ and at the same time  $\boldg^{-1} \boldx = \boldh^{-1} \boldy$. In particular we must have $\boldy = \boldu' \boldu'' \boldx$ and there is a choice for $\boldu', \boldu''$ (same choice as above) such that in fact $\vert \boldu'' \boldx \vert = \vert \boldx \vert - \vert \boldu'' \vert$ and $\vert \boldu' \boldu'' \boldx \vert = \vert \boldx \vert - \vert \boldu'' \vert + \vert \boldu' \vert$. We also see that we must have $(\boldu'')^{-1} \boldu V\Gamma_0 \leq \boldx$ for this expression to be non-zero. Taking into account $\Phi_i$ we see that \eqref{Eqn=BigClaim} is non-zero only if $i = \vert \boldu ''\vert - \vert \boldu' \vert$.

Next we note that  by comparing the last two tensor legs,  if a summand in \eqref{Eqn=BigClaim} is non-zero then we have $\boldg(2, \Lambda) = \boldh(2, \Lambda')$ and $\Lambda \cap \Lambda' = \emptyset$. Recall that $\boldh = \boldu' \boldu'' \boldg$. But then $\Lambda$ must equal the letters in $\boldg(2, \emptyset)$ that are not any more in $(\boldu' \boldu'' \boldg)(2, \emptyset)$ and $\Lambda'$ must equal the letters in $(\boldu' \boldu'' \boldg)(2, \emptyset)$ that are not anymore in $\boldg(2, \emptyset)$. This precisely means that $\Lambda = \boldg(2, \emptyset) \backslash (\boldu' \boldu'' \boldg)(2, \emptyset)$ and $\Lambda' = (\boldu' \boldu'' \boldg)\backslash \boldg(2, \emptyset)$.

In all, we find that
 \[
 \begin{split}
 \eqref {Eqn=BigClaim} = &   \langle    (T_{\boldu'}^{(1)} P_{\boldu V\Gamma_0} T_{\boldu''} ^{(1)} \otimes 1  \otimes 1 \otimes 1 ) U^+_{ n-i }  \delta_{\boldx}, U^-_{  n+i}  \delta_{\boldy}   \rangle \\
   = & \langle \sum_{\boldg \leq \boldx} \sum_{\Lambda \leq \boldg(2, \emptyset)} \beta^+_{\boldg, \boldx, \Lambda, n-i } T_{\boldu'}^{(1)} P_{\boldu V\Gamma_0} T_{\boldu''} ^{(1)} \delta_\boldg \otimes \delta_{\boldg^{-1} \boldx} \otimes  \delta_{\boldg(2, \Lambda)} \otimes \widetilde{\xi}^+_{\Lambda}, \\
   & \qquad \sum_{\boldh \leq \boldy} \sum_{\Lambda' \leq \boldh(2, \emptyset)} \beta^-_{\boldh, \boldy, \Lambda', n+i } \delta_{\boldh^{-1} \boldx} \otimes \delta_{\boldh} \otimes \delta_{\boldh(2, \Lambda')} \otimes \widetilde{\xi}_{\Lambda}^- \rangle \\
   = & \sum_{(\boldu'')^{-1} \boldu V\Gamma_0 \leq \boldg \leq \boldx} \beta^+_{\boldg, \boldx, \boldg(2, \emptyset) \backslash (\boldu' \boldu'' \boldg)(2, \emptyset),   n-i }  \beta^-_{\boldu' \boldu'' \boldg, \boldu' \boldu'' \boldx, (\boldu' \boldu'' \boldg)(2, \emptyset) \backslash  \boldg(2, \emptyset),   n+i }.\\
   \end{split}
   \]

 We claim that this expression is 1 by verifying Lemma \ref{Lem=Aux}. Indeed
set  $\boldw := (\boldu'')^{-1} \boldu V\Gamma_0$. First suppose that $\boldu$ is the empty word. Then
\[
\boldw(2, \boldw(2, \emptyset) \backslash (\boldu' \boldu'' \boldw)(2, \emptyset)  ) = V\Gamma_0
\]
and so
\[
\boldw(1, \boldw(2, \emptyset) \backslash (\boldu' \boldu'' \boldw)(2, \emptyset)  ) = (\boldu'')^{-1}.
 \]
 If $\boldu$ is not the empty word, then let $s\in W$ be a final letter of $\boldu$ (i.e. $\vert \boldu s \vert = \vert \boldu \vert - 1$). Then $s$ cannot commute with $V\Gamma_0$ as this would violate the equation $T_{\boldu'}^{(1)} P_{\boldu V\Gamma_0} T_{\boldu''} ^{(1)} = T_{\boldw'}^{(1)} P_{  V\Gamma_0} T_{\boldw''} ^{(1)}$. Therefore again,
 \[
 \boldw(2, \boldw(2, \emptyset) \backslash (\boldu' \boldu'' \boldw)(2, \emptyset)  ) = \boldw(2, \emptyset) = V\Gamma_0
  \]
  and so
  \[
  \boldw(1, \boldw(2, \emptyset) \backslash (\boldu' \boldu'' \boldw)(2, \emptyset)  ) = (\boldu'')^{-1} \boldu.
   \]
   Further our constructions give that $\vert \boldu''\vert = \frac{k-i}{2}$ and $2 \vert \boldu \vert + \vert V\Gamma_0 \vert = \vert \boldw \vert - \vert \boldu' \vert - \vert \boldu''\vert = \vert \boldw \vert - k$. So we have,
 \begin{equation}\label{Eqn=House}
 \begin{split}
&  2 \vert \boldw(1, \boldw(2, \emptyset) \backslash (\boldu' \boldu'' \boldw)(2, \emptyset)  ) \vert +
 \vert \boldw(2, \boldw(2, \emptyset) \backslash (\boldu' \boldu'' \boldw)(2, \emptyset)  ) \vert \\
 =& 2 \vert (\boldu'')^{-1}   \vert + 2 \vert \boldu \vert + \vert V\Gamma_0 \vert
  =  2 \frac{k-i}{2} + (\vert \boldw \vert - k) \\
  = &
  \vert \boldw \vert - i  \leq n-i,
 \end{split}
 \end{equation}
 so that  by Lemma \ref{Lem=Aux} we see that \eqref{Eqn=BigClaim} is 1.
So we conclude that \eqref{Eqn=MoonenIsAnAsshol} holds.

\end{proof}

\begin{lem} \label{Lem=SecondTediousLemma}
Assume that $\Gamma$ is finite so that \eqref{Eqn=SigmaAB} is defined boundedly. We have for $n \in \mathbb{N},  -n \leq i \leq n$:
\[
    \sigma_{   n-i ,   n+i} \circ \Phi_{i} \circ  \Psi_{\leq n} =      \sigma_{   n-i ,   n+i } \circ \Phi_{i}.
\]
\end{lem}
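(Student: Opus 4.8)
The plan is to verify the asserted identity on the $\ast$-algebra $\mathcal{A}_q$, which is linearly spanned by the generators $T_\boldw, \boldw \in W$. Since $\Psi_{\leq n}$, $\Phi_i$ and $\sigma_{n-i,n+i}$ are all linear it suffices to check
\[
\sigma_{n-i,n+i}\circ\Phi_i\circ\Psi_{\leq n}(T_\boldw) = \sigma_{n-i,n+i}\circ\Phi_i(T_\boldw)
\]
for every $\boldw \in W$. If $\vert \boldw \vert \leq n$ then $\Psi_{\leq n}(T_\boldw) = T_\boldw$ and the equality is immediate. The only remaining case is $\vert \boldw \vert > n$, where $\Psi_{\leq n}(T_\boldw) = 0$ so the left hand side vanishes; the whole lemma therefore reduces to the claim that $\sigma_{n-i,n+i}\circ\Phi_i(T_\boldw) = 0$ whenever $\vert \boldw \vert > n$.

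To establish this I would re-run, for each summand of $T_\boldw$, exactly the matrix-coefficient computation carried out in the proof of Lemma \ref{Lem=FirstTediousLemma}. Concretely, expand $T_\boldw$ by Lemma \ref{Lem=TExpansion} and rewrite each summand via Lemma \ref{Lem=BreakDown} in the form $T_{\boldu'}^{(1)} P_{\boldu V\Gamma_0} T_{\boldu''}^{(1)}$. The factor $\Phi_i$ forces a summand to contribute nothing unless $i = \vert \boldu'' \vert - \vert \boldu' \vert$, just as in Lemma \ref{Lem=FirstTediousLemma}. For each surviving summand I would compute the matrix coefficient $\langle \sigma_{n-i,n+i}\circ\Phi_i(T_{\boldu'}^{(1)} P_{\boldu V\Gamma_0} T_{\boldu''}^{(1)})\delta_\boldx, \delta_\boldy\rangle$ precisely as in \eqref{Eqn=BigClaim}: comparing the tensor legs of $U^+_{n-i}\delta_\boldx$ and $U^-_{n+i}\delta_\boldy$ collapses it to the sum $\sum_{(\boldu'')^{-1}\boldu V\Gamma_0 \leq \boldg \leq \boldx} \beta^+ \beta^-$ evaluated by Lemma \ref{Lem=Aux}.

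The key observation — and the whole content of the lemma — is that this derivation never used the hypothesis $\vert \boldw \vert \leq n$; that assumption entered Lemma \ref{Lem=FirstTediousLemma} only at the very last inequality. Indeed the purely algebraic identity \eqref{Eqn=House} gives, for the word $(\boldu'')^{-1}\boldu V\Gamma_0$, the value $\vert \boldw \vert - i$ for the quantity controlling $\beta^+$, and this holds for every $\boldw$. Hence by the dichotomy in Lemma \ref{Lem=Aux} the matrix coefficient equals $1$ exactly when $\vert \boldw \vert - i \leq n - i$, that is when $\vert \boldw \vert \leq n$, and equals $0$ otherwise. Since $\vert \boldw \vert > n$, every matrix coefficient of $\sigma_{n-i,n+i}\circ\Phi_i(T_{\boldu'}^{(1)} P_{\boldu V\Gamma_0} T_{\boldu''}^{(1)})$ vanishes for all $\boldx, \boldy$, so each summand maps to the zero operator and therefore $\sigma_{n-i,n+i}\circ\Phi_i(T_\boldw) = 0$, as required. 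The only real obstacle is the bookkeeping: one must confirm that the derivation of \eqref{Eqn=House} together with Lemma \ref{Lem=Aux} produces precisely the sharp threshold $\vert \boldw \vert \leq n$ independently of the size of $\boldw$. Once that is in place the statement follows with no new ideas beyond those of Lemma \ref{Lem=FirstTediousLemma}.
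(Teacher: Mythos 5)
Your proposal is correct and follows essentially the same route as the paper: reduce to showing $\sigma_{n-i,n+i}\circ\Phi_i(T_\boldw)=0$ for $\vert\boldw\vert>n$, expand via Lemma \ref{Lem=TExpansion} and Lemma \ref{Lem=BreakDown}, and re-run the matrix-coefficient computation of Lemma \ref{Lem=FirstTediousLemma} to land on the quantity $\vert\boldw\vert-i$, which exceeds the threshold $n-i$ so that Lemma \ref{Lem=Aux} forces every coefficient to vanish. This is precisely the paper's argument, including the observation that the computation leading to \eqref{Eqn=House} is independent of whether $\vert\boldw\vert\leq n$.
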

\begin{proof}
The proof pretty much parallels the proof of Lemma \ref{Lem=FirstTediousLemma}.
We need to show that the right hand side applied to $T_\boldw$ with $\vert \boldw \vert > n$ equals 0. Therefore we may look at the summands $T_{\boldw'}^{(1)} P_{V\Gamma_0}T_{\boldw''}^{(1)}$ with $(\boldw', \Gamma_0, \boldw'') \in A_\boldw$ which can be further decomposed as $T_{\boldu'}^{(1)} P_{\boldu V\Gamma_0} T_{\boldu''}^{(1)}$ with  $\boldu, \boldu', \boldu''$ as in Lemma \ref{Lem=BreakDown}.   It suffices then to show that for all choices of $k$ the following expression is 0:
\begin{equation}\label{Eqn=Koffie}
\langle   \sigma_{   n-i ,   n+i } \circ \Phi_{i} \circ \rho_k  (T_{\boldu'}^{(1)} P_{\boldu V\Gamma_0} T_{\boldu''}^{(1)}) \delta_{\boldx}, \delta_{\boldy} \rangle.
\end{equation}
Firstly,     this expression is 0 in case $\vert \boldu' \vert + \vert \boldu''\vert \not = k$. So assume  $\vert \boldu' \vert + \vert \boldu''\vert   = k$. Then,
\[
 \eqref{Eqn=Koffie} = \langle   \sigma_{   n-i ,   n+i } \circ \Phi_{i}   (T_{\boldu'}^{(1)} P_{\boldu V\Gamma_0} T_{\boldu''}^{(1)}) \delta_\boldx, \delta_\boldy \rangle.
 \]
  As in the proof of Lemma \ref{Lem=FirstTediousLemma} the expression \eqref{Eqn=Koffie} equals 0 unless $\boldu' \boldu'' \boldx = \boldy$ and $(\boldu'')^{-1} \boldu V\Gamma_0 \leq \boldx$ with $\boldu'', \boldu'$ chosen in such a way that $\vert \boldu'' \boldx \vert = \vert \boldx \vert - \vert \boldu''\vert$ and $\vert \boldu' \boldu'' \boldx \vert = \vert \boldx \vert - \vert \boldu''\vert + \vert \boldu' \vert$. In that case $i = \vert \boldu' \vert - \vert \boldu''\vert$. As in \eqref{Eqn=BigClaim},
 \begin{equation}\label{Eqn=NogEen}
 \begin{split}
   \eqref{Eqn=Koffie} =  &  \langle    (T_{\boldu'}^{(1)} P_{\boldu V\Gamma_0} T_{\boldu''} ^{(1)}  \otimes 1 \otimes   1 \otimes 1 ) U^+_{ n-i }  \delta_{\boldx}, U^-_{  n+i }  \delta_{\boldy}   \rangle \\
   = &      \sum_{(\boldu'')^{-1} \boldu V\Gamma_0 \leq \boldg \leq \boldx} \beta_{\boldg, \boldx, \boldg(2, \emptyset) \backslash (\boldu' \boldu'' \boldg)(2, \emptyset),   n-i }  \beta_{\boldu' \boldu'' \boldg, \boldu' \boldu'' \boldx, (\boldu' \boldu'' \boldg)(2, \emptyset) \backslash  \boldg(2, \emptyset),   n+i }.
 \end{split}
 \end{equation}
 As for  $\boldw := (\boldu'')^{-1} \boldu V\Gamma_0$ we have again by the same reasoning as in/before   \eqref{Eqn=House} that,
 \[
 \begin{split}
&  2 \vert \boldw(1, \boldw(2, \emptyset) \backslash (\boldu' \boldu'' \boldw)(2, \emptyset)  ) \vert +
 \vert \boldw(2, \boldw(2, \emptyset) \backslash (\boldu' \boldu'' \boldw)(2, \emptyset)  ) \vert  =
  \vert \boldw \vert - i  > n-i.
 \end{split}
 \]
 The expression \eqref{Eqn=NogEen} is zero by Lemma \ref{Lem=Aux}.
\end{proof}
\begin{prop}\label{Prop=CutDown}
We have $\Vert \Psi_{\leq n} \Vert_{\CB} \leq P(n)$ for some polynomial $P$.
\end{prop}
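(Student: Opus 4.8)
The plan is to collapse the three auxiliary maps into a single closed formula for $\Psi_{\leq n}$ and then read the polynomial bound directly off the complete bounds of the individual pieces. Throughout I work, as in Lemmas \ref{Lem=FirstTediousLemma}, \ref{Lem=SecondTediousLemma} and \ref{Lem=UPol}, under the standing assumption that $\Gamma$ is finite, since this is exactly what makes the Stinespring dilations $U_a^{\pm}$ of \eqref{Eqn=Dilate} bounded. First I would combine the two tedious lemmas: Lemma \ref{Lem=FirstTediousLemma} writes $\Psi_{\leq n} = \sum_{i=-n}^n \sigma_{n-i,\,n+i}\circ\Phi_i\circ\Psi_{\leq n}$, and Lemma \ref{Lem=SecondTediousLemma} strips the trailing $\Psi_{\leq n}$ from each summand. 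Substituting one into the other gives, as an identity of maps on the $\sigma$-weakly dense $\ast$-algebra $\mathcal{A}_q$,
\[
\Psi_{\leq n} = \sum_{i=-n}^n \sigma_{n-i,\,n+i}\circ \Phi_i .
\]
Since there are only $2n+1$ summands, it then suffices, by the triangle inequality for the completely bounded norm, to bound $\Vert \sigma_{n-i,\,n+i}\circ\Phi_i\Vert_{\CB}$ polynomially in $n$.

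Next I would estimate the two factors separately. The map $\Phi_i$ is completely contractive: writing $(\Phi_i\otimes\id)(X)=\int_{\mathbb{T}} z^{-i}(A_z\otimes 1)^\ast X (A_z\otimes 1)\,dz$ and using that each $A_z\otimes 1$ is unitary and $\vert z^{-i}\vert=1$, the integral triangle inequality yields $\Vert\Phi_i\Vert_{\CB}\leq 1$. For $\sigma_{a,b}$ I would use its definition \eqref{Eqn=SigmaAB} as the normal complete isometry $x\mapsto x\otimes 1\otimes 1\otimes 1$ sandwiched between $(U_a^-)^\ast$ on the left and $U_b^+$ on the right, so that $\Vert\sigma_{a,b}\Vert_{\CB}\leq \Vert U_a^-\Vert\,\Vert U_b^+\Vert$. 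The polynomial growth $\Vert U_a^{\pm}\Vert\leq P_0(a)$ supplied by Lemma \ref{Lem=UPol} then gives $\Vert\sigma_{n-i,\,n+i}\circ\Phi_i\Vert_{\CB}\leq P_0(n-i)\,P_0(n+i)\leq P_0(2n)^2$ for every $-n\leq i\leq n$, using $0\leq n\pm i\leq 2n$ (after enlarging $P_0$ so that it is nondecreasing on $[0,\infty)$).

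Summing the $2n+1$ terms produces $\Vert\Psi_{\leq n}\Vert_{\CB}\leq (2n+1)\,P_0(2n)^2=:P(n)$, a polynomial in $n$, as an estimate on $\mathcal{A}_q$; density of $\mathcal{A}_q$ in $\cM_q$ then propagates the same bound to the normal extension of $\Psi_{\leq n}$ to $\cM_q$.

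The genuine difficulty has already been absorbed into the earlier lemmas: the real work is the counting estimate of Lemma \ref{Lem=PolynomialBound} (which makes $\Vert U_a^{\pm}\Vert$ polynomial through Lemma \ref{Lem=UPol}) together with the creation/annihilation bookkeeping of Lemmas \ref{Lem=FirstTediousLemma} and \ref{Lem=SecondTediousLemma}. Granting those, the one step I would watch most carefully is that the factorization $\Psi_{\leq n}=\sum_i\sigma_{n-i,\,n+i}\circ\Phi_i$ is an identity of maps defined on \emph{all} of $\mathcal{A}_q$, not merely on the length-$\leq n$ part, so that no contribution from words of length $>n$ is silently dropped; this is precisely the content of Lemma \ref{Lem=SecondTediousLemma}, which guarantees that each summand annihilates $T_\boldw$ whenever $\vert\boldw\vert>n$.
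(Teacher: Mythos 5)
Your proof is correct and follows exactly the paper's argument: combine Lemmas \ref{Lem=FirstTediousLemma} and \ref{Lem=SecondTediousLemma} to obtain $\Psi_{\leq n}=\sum_{i=-n}^n\sigma_{n-i,\,n+i}\circ\Phi_i$, then bound each of the $2n+1$ summands using that $\Phi_i$ is completely contractive and that $\Vert\sigma_{a,b}\Vert_{\CB}$ is controlled polynomially via Lemma \ref{Lem=UPol}. The only difference is that you spell out the cb-norm estimates in more detail than the paper does.
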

\begin{proof}
 By Lemmas \ref{Lem=FirstTediousLemma} and \ref{Lem=SecondTediousLemma} we have,
 \[
 \begin{split}
 \Psi_{\leq n}  = &    \sum_{i = -n}^n    \sigma_{   n-i ,  n+i  } \circ \Phi_{i}  \circ \Psi_{\leq n}\\
  = &     \sum_{i = -n}^n    \sigma_{   n-i,   n+i } \circ \Phi_{i},
 \end{split}
 \]
 and the right hand side is completely bounded with polynomial bound in $n$; indeed the bound of $\sigma_{   n-i ,  n+i  }$ is polynomial in $n$ by its very definition and Lemma  \ref{Lem=UPol}.
\end{proof}

\begin{dfn}\label{Dfn=CBAP}
A von Neumann algebra $\cM$ has the weak-$\ast$ completely bounded approximation property (wk-$\ast$ CBAP) if there exists a net of normal finite rank maps $\Phi_i: \cM \rightarrow \cM$ such that $\Phi_i(x) \rightarrow x$ in the $\sigma$-weak topology and moreover $\sup_{i} \Vert \Phi_i \Vert_{\CB} < \infty$. If the maps $\Phi_i$ can be chosen so that $\limsup_{i} \Vert \Phi_i \Vert_{\CB} \leq 1$ then $\cM$ is said to have the weak-$\ast$ completely contractive approximation property (wk-$\ast$ CCAP).
\end{dfn}

\begin{thm}\label{Thm=CBAP}
Let $(W, S)$ be a right angled Coxeter system and let $q> 0$. The Hecke von Neumann algebra $\cM_q$ has the wk-$\ast$ CCAP.
\end{thm}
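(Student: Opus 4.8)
The plan is to assemble the wk-$\ast$ CCAP out of the two ingredients already in hand: the radial unital completely positive maps $\Phi_r$ of Proposition~\ref{Prop=Radial}, which rescale the coefficient of $T_\boldw$ by $r^{|\boldw|}$ and satisfy $\|\Phi_r\|_{\CB}=1$, and the word-length cut-downs $\Psi_{\leq n}$, whose complete bound grows only polynomially by Proposition~\ref{Prop=CutDown}. Since $S$, and hence $\Gamma$, is finite, only finitely many $\boldw\in W$ satisfy $|\boldw|\leq n$; consequently each $\Psi_{\leq n}$ has finite-dimensional range $\mathrm{span}\{T_\boldw:|\boldw|\leq n\}$ and is a finite rank map. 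First I would record that $\Psi_{\leq n}$ is moreover \emph{normal}: the identity $\Psi_{\leq n}=\sum_{i=-n}^{n}\sigma_{n-i,n+i}\circ\Phi_i$ of Proposition~\ref{Prop=CutDown} exhibits it, on the $\sigma$-weakly dense subalgebra $\mathcal{A}_q$, as the restriction of a normal completely bounded map on $\cB(L^2(\cM_q))$ (the maps $\Phi_i$ are averages $\int_{\mathbb{T}}z^{-i}A_z^\ast(\cdot)A_z\,dz$ of normal maps, and each $\sigma_{a,b}$ has the form $x\mapsto (U_a^-)^\ast(x\otimes1\otimes1\otimes1)U_b^+$, hence is normal); since $\Psi_{\leq n}(\mathcal{A}_q)\subseteq\mathcal{A}_q\subseteq\cM_q$ and $\cM_q$ is $\sigma$-weakly closed, this normal extension maps $\cM_q$ into $\cM_q$ with the same polynomial complete bound. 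Alternatively the finite-rank maps $\rho_k$, and therefore $\Psi_{\leq n}$, may be normalised by the usual double-duality argument: one passes to the bidual and composes with the canonical central projection onto $\cM_q$, which preserves the completely bounded norm and costs nothing in rank since the range is finite-dimensional.

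Second, I would set $\Psi_n=\Psi_{\leq n}-\Psi_{\leq n-1}$, so that $\Psi_n$ is normal, finite rank, and $\|\Psi_n\|_{\CB}\leq Q(n)$ for a polynomial $Q$, and define, for $0<r<1$ and $N\in\mathbb{N}$, the normal finite rank map
\[
\Phi_{r,N}:=\sum_{n=0}^{N}r^{n}\,\Psi_n,
\qquad
\Phi_{r,N}(T_\boldw)=\delta(|\boldw|\leq N)\,r^{|\boldw|}T_\boldw .
\]
The key observation is that the full radial multiplier decomposes as $\Phi_r=\sum_{n=0}^{\infty} r^n\Psi_n$, and this series converges in completely bounded norm: indeed
\[
\|\Phi_r-\Phi_{r,N}\|_{\CB}\leq\sum_{n=N+1}^{\infty}r^{n}\,\|\Psi_n\|_{\CB}\leq\sum_{n=N+1}^{\infty}r^{n}Q(n)\xrightarrow[N\to\infty]{}0,
\]
since $\sum_n r^nQ(n)<\infty$ for every $r<1$. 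As a cb-norm limit of normal maps agreeing with $\Phi_r$ on the $\sigma$-weakly dense $\mathcal{A}_q$, the limit is genuinely $\Phi_r$, so $\Phi_{r,N}\to\Phi_r$ in $\|\cdot\|_{\CB}$.

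Finally I would extract the approximating net. For each $r\in(0,1)$ choose $N(r)$ so large that $\|\Phi_r-\Phi_{r,N(r)}\|_{\CB}\leq 1-r$. Then each $\Phi_{r,N(r)}$ is normal and finite rank, with complete bound at most $\|\Phi_r\|_{\CB}+(1-r)=1+(1-r)$, whence $\limsup_{r\to1}\|\Phi_{r,N(r)}\|_{\CB}\leq1$. For the convergence, note that $\Phi_r$ acts on $L^2(\cM_q)$ as the diagonal contraction $T_\boldw\Omega\mapsto r^{|\boldw|}T_\boldw\Omega$, which tends strongly to the identity as $r\to1$; hence $\|\Phi_r(x)\Omega-x\Omega\|_2\to0$ for every $x\in\cM_q$, and as the net $\{\Phi_r(x)\}$ is bounded in $\cM_q$ this forces $\Phi_r(x)\to x$ $\sigma$-weakly. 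Combining this with $\|\Phi_{r,N(r)}-\Phi_r\|_{\CB}\to0$ yields $\Phi_{r,N(r)}\to\id$ in the point-$\sigma$-weak topology, so the net $(\Phi_{r,N(r)})_{r\to1}$ witnesses the wk-$\ast$ CCAP. The genuinely hard work—the polynomial complete bound on the word-length projections—is already contained in Proposition~\ref{Prop=CutDown}; within the present proof the only points requiring care are the normality and finite-rank of the truncations and the tail estimate that upgrades CBAP to CCAP.
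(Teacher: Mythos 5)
Your proposal is correct and follows essentially the same route as the paper: the approximants $\Phi_{r,N}=\sum_{n=0}^N r^n\Psi_n$ are exactly the compositions $\Psi_{\leq N}\circ\Phi_r$ used there, and the key tail estimate $\sum_{n>N}r^n\Vert\Psi_n\Vert_{\CB}\to 0$ against the polynomial bound of Proposition~\ref{Prop=CutDown} is the same. Your normality argument via the explicit normal dilation $\sum_i\sigma_{n-i,n+i}\circ\Phi_i$ is a harmless variant of the paper's predual/Kaplansky argument, which you also mention; the rest matches.
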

\begin{proof}
By an inductive limit argument and Corollary \ref{Cor=Expected} we may assume that $\Gamma$ is finite.
The proof  goes back to Haagerup \cite{HaagerupExample}. Consider the completely bounded map $\Psi_{\leq n} \circ \Phi_r: \mathcal{A}_q \rightarrow \cM_q$. Clearly as $n \rightarrow \infty$ and $r \nearrow 1$ this map converges to the identity in the point $\sigma$-weak topology. Let $\epsilon > 0$. We have,
\[
\Vert \Psi_{\leq n} \circ \Phi_r \Vert_{\CB} \leq \Vert (\Psi_{\leq n} - {\rm Id}) \circ \Phi_r  \Vert_{\CB} + \Vert \Phi_r \Vert_{\CB}
\leq  \left( \sum_{i=n+1}^{\infty} r^n \Vert \Psi_n \Vert_{\CB} \right) + \Vert \Phi_r \Vert_{\CB},
\]
which shows using Proposition \ref{Prop=CutDown} and Proposition \ref{Prop=Radial} that we may  let $r \nearrow 1$ and then choose $n := n_r$ converging to $\infty$ such that $\Vert \Psi_{\leq n_r} \circ \Phi_{r} \Vert_{\CB} \leq 1 + \epsilon$ for some constant.

The map $\Phi_r$ is normal. Also $\Psi_{\leq n}$ is normal by a standard argument: indeed using duality and Kaplansky's density theorem one sees that   $\Psi_n$ maps $L^1(\cM_q) \rightarrow L^1(\cM_q)$ boundedly. Then taking the dual of this map yields that $\Psi_n: \cM_q \rightarrow \cM_q$ is a normal map.  We may extend  $\Psi_{\leq n} \circ \Phi_r$  to a normal map $\cM_q \rightarrow \cM_q$. Then using a standard approximation argument yields the result.
\end{proof}

\begin{rmk}
In case our right-angled Coxeter group is free (i.e. $m(s,t) = \infty$ for all $s \not = t$) it is possible to adapt the arguments of \cite{RicardXu} in order to obtain word length cut downs with polynomial bound. This argument -- purely based on book keeping of creations/annihilations -- seems unrepairable in the general case.   In case $q = 1$ for a general right-angled Coxeter group  word length cut-downs were obtained in \cite{Reckwerdt} by using actions on CAT(0)-spaces. The connection with the general Hecke case is unclear.
\end{rmk}

\section{Strong solidity in the hyperbolic case}\label{Sect=StrongSolidity}

We prove that in the factorial case (see Theorem \ref{Thm=Factor})   $\cM_q$   is a strongly solid von Neumann algebra in case the Coxeter group is hyperbolic.

\subsection{Preliminaries on strongly solid algebras}
 The {\it normalizer} of a von Neumann subalgebra  $\mathcal{P}$ of $\mathcal{M}$ is defined as $\{ u \in \mathcal{U}(\cM) \mid  u \mathcal{P} u^\ast = \mathcal{P} \}$. We define $\Nor_{\cP}(\cM)$ as the von Neumann algebra generated by the normalizer of $\cP$ in $\cM$.  A von Neumann algebra is called {\it diffuse} if it does not contain minimal projections.

 \begin{dfn}\label{Dfn=StrongSolidity}
 A  finite von Neumann algebra $\cM$ is {\it strongly solid} if for any diffuse injective von Neumann subalgebra $\mathcal{P} \subseteq \cM$ the von Neumann algebra  $\Nor_{\mathcal{M}}(\mathcal{P})$ is  again injective.
 \end{dfn}

In \cite{OzawaPopaII} Ozawa and Popa proved that free group factors are strongly solid and consequently they could prove that these are II$_1$ factors that have no Cartan subalgebras (as was proved by Voiculescu \cite{Voiculescu} earlier on by a completely different method). A general source for strongly solid von Neumann algebras are group von Neumann algebras of groups that have the weak-$\ast$ completely bounded approximation property and are bi-exact (see \cite{ChifanSinclair}, \cite{ChifanSinclairUdrea}, \cite{PopaVaesCrelle}; we also refer to these sources for the definition of bi-exactness).
  The following definition and subsequent theorem were then  introduced and proved in \cite{IsonoExample}. For standard forms of von Neumann algebras we refer to \cite{Takesaki2}.

 \begin{dfn}\label{Dfn=AO}
 Let $\mathcal{M} \subseteq \cB(\cH)$ be a von Neumann algebra represented on the standard Hilbert space $\cH$ with modular conjugation $J$. We say that $\cM$ satisfies condition (AO)$^+$ if there exists a unital C$^\ast$-subalgebra $\bA \subseteq \cM$ that is $\sigma$-weakly dense in $\cM$ and which satisfies the following two conditions:
 \begin{enumerate}
 \item $\bA$ is locally reflexive.
 \item\label{Item=AO2} There exists a ucp map $\theta: \bA \minotimes J \bA J \rightarrow \cB(\cH)$ such that $\theta(a \otimes b) -ab$ is a compact operator on $\cH$.
 \end{enumerate}
 \end{dfn}

\begin{thm}[\cite{IsonoExample}]\label{Thm=AOStrongSolidity}
Let $\cM$ be a II$_1$-factor with separable predual. Suppose that $\cM$ satisfies condition (AO)$^+$ and has the weak-$\ast$ completely bounded approximation property. Then $\cM$ is strongly solid.
\end{thm}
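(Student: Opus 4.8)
The plan is to prove this abstract strong solidity criterion by feeding two independent structural inputs --- weak compactness coming from the approximation property, and a boundary/compactness statement coming from condition (AO)$^+$ --- into a single spectral-gap argument that forces the normalizer to be amenable. Fix a diffuse injective (equivalently, amenable) von Neumann subalgebra $\cP \subseteq \cM$ and write $\cG = \{ u \in \cU(\cM) \mid u \cP u^\ast = \cP \}$ for its normalizer and $\Nor_\cM(\cP) = \cG''$. Since a von Neumann algebra is injective if and only if it is amenable (Connes), the whole theorem reduces to showing that $\cG''$ is amenable, and by factoriality it suffices to produce a net of almost-$\cG''$-central unit vectors in the coarse correspondence $L^2(\cM) \otimes \overline{L^2(\cM)}$.

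First I would extract weak compactness from the wk-$\ast$ CBAP. By the theorem of Ozawa--Popa, a finite von Neumann algebra with the weak-$\ast$ completely bounded approximation property has the property that the conjugation action of $\cG$ on the amenable subalgebra $\cP$ is \emph{weakly compact}: there is a net of positive unit vectors $\xi_n \in L^2(\cP \wot \cP^{op})$ which is asymptotically central for $\cP$, asymptotically invariant under $\Ad(u \otimes \overline{u})$ for every normalizing unitary $u \in \cG$, and recovers the trace $\tau$ on each tensor leg. The role of the CBAP is precisely to manufacture these almost-invariant Hilbert--Schmidt vectors out of the finite-rank approximants of the identity; separability of the predual lets one replace the net by a sequence. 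In parallel I would reinterpret condition (AO)$^+$ as a statement about bimodules: the ucp map $\theta \colon \bA \minotimes J\bA J \to \cB(\cH)$ with $\theta(a \otimes b) - ab \in \mathbb{K}(\cH)$ says exactly that the identity $\bA$--$\bA$ bimodule $\cH = L^2(\cM)$ is, modulo the compact operators, weakly contained in the coarse bimodule $L^2(\cM) \otimes \overline{L^2(\cM)}$. Local reflexivity of $\bA$ is what allows one to pass from this minimal-tensor-product ucp datum to an honest weak containment of correspondences and to promote $\theta$ to the von Neumann level; in effect (AO)$^+$ is the operator-algebraic incarnation of bi-exactness, providing a boundary on which $\cM$ and $J\cM J$ commute modulo compacts.

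The decisive step, and the main obstacle, is to combine the two. Applying the boundary map $\theta$ to the weakly compact vectors $\xi_n$ produces vectors in $\cH \otimes \overline{\cH}$ that inherit, up to errors that the compactness in (AO)$^+$ forces to vanish in the limit, the almost-$\cG''$-invariance of the $\xi_n$. A Popa-type spectral-gap computation then shows that these limiting vectors are almost central for $\cG''$ inside the coarse correspondence; since the coarse correspondence admits almost-central vectors only for an amenable algebra, we conclude that $\cG'' = \Nor_\cM(\cP)$ is amenable, hence injective. Equivalently, this is the Popa--Vaes dichotomy applied with target subalgebra $\mathbb{C}1$: one alternative is an intertwining of $\cP$ into $\mathbb{C}1$, which is impossible because $\cP$ is diffuse, so the surviving alternative is amenability of the normalizer. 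The genuine difficulty is bookkeeping the compact-operator error terms while transporting the weak-compactness net (which lives in $L^2(\cP \wot \cP^{op})$) through the boundary structure (which lives in $\cB(\cH)$ modulo $\mathbb{K}(\cH)$); reconciling these two locations is exactly the technical core of Isono's argument, and it is where both the separability hypothesis and local reflexivity of $\bA$ are indispensable.
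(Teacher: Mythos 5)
The first thing to note is that the paper contains no proof of this statement at all: it is imported verbatim from Isono \cite{IsonoExample}, so your sketch has to be measured against Isono's published argument, which in turn follows the scheme of Popa--Vaes \cite{PopaVaesCrelle}. Against that benchmark there is one concrete gap, and it sits in your very first input. The Ozawa--Popa weak compactness theorem you invoke requires the weak-$\ast$ \emph{complete metric} approximation property, i.e.\ approximation constant $1$ (the wk-$\ast$ CCAP), whereas the theorem hypothesizes only the wk-$\ast$ CBAP with an arbitrary finite constant $\Lambda$. For $\Lambda > 1$ it is not known that the conjugation action of the normalizer of an amenable subalgebra $\cP$ is weakly compact, and this is precisely the obstruction that forced Popa--Vaes, and Isono after them, to abandon weak compactness altogether: instead one dilates the finite-rank completely bounded approximants of the identity to suitable Hilbert bimodule/representation data and extracts from them a net of states whose limit is a $\Nor_{\cM}(\cP)$-central state; condition (AO)$^+$, together with local reflexivity of $\bA$ and the Choi--Effros lifting theorem \cite{ChoiEffros} to absorb the compact error in $\theta(a \otimes b) - ab$, then upgrades this to a hypertrace for the normalizer, i.e.\ injectivity. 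So you have correctly identified the two structural inputs and the correct conclusion mechanism (an $\cN$-central state on $\cB(\cH)$ restricting to $\tau$, equivalently almost-central tracial vectors for the coarse bimodule), but the bridge you propose --- Ozawa--Popa weakly compact vectors pushed through $\theta$ plus a spectral-gap computation --- uses a theorem outside its hypotheses and would not prove the statement as formulated.

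Two mitigating remarks. First, for the purposes of \emph{this} paper the gap is harmless: Theorem \ref{Thm=CBAP} actually establishes the wk-$\ast$ CCAP for $\cM_q$, so constant $1$ is available and your route can be completed along Ozawa--Popa/Chifan--Sinclair lines \cite{OzawaPopaII}, \cite{ChifanSinclair}; but then you have proved a weaker theorem than the one stated. Second, even granting weak compactness, your combination step is more delicate than ``applying $\theta$ to the vectors $\xi_n$'': $\theta$ is a ucp map into $\cB(\cH)$, not a Hilbert-space operator, so one must compose the vector states of the $\xi_n$ with $\theta$ to get states on $\bA \minotimes J\bA J$, control the compact error terms, and pass to a limit state --- this is exactly where separability of the predual and local reflexivity enter, as you correctly flag, but it is a genuine construction rather than bookkeeping. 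Your endgame (the dichotomy with target $\mathbb{C}1$, with diffuseness of $\cP$ ruling out the intertwining alternative) is a fair summary of how the conclusion is organized.
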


A maximal abelian von Neumann subalgebra $\mathcal{P} \subseteq \cM$ of a II$_1$ factor $\cM$ is called a {\it Cartan subalgebra} if $\Nor_{\mathcal{M}}(\mathcal{P}) = \mathcal{M}$. It is then obvious that if $\cM$ is a non-injective strongly solid II$_1$-factor, then $\cM$ cannot contain a Cartan subalgebra. Therefore we will now prove that the Hecke von Neumann algebra $\cM_q$ in the factorial, hyperbolic case satisfies condition (AO)$^+$.

\subsection{Crossed products}
Let $\bA$ be a C$^\ast$-algebra that is represented on a Hilbert space $\cH$. Let $\alpha: \sG \curvearrowright \bA$ be a continuous action of a discrete  group $\sG$ on $\bA$. The reduced crossed product $\bA \rtimes_r \sG$ is the C$^\ast$-algebra of operators acting on $\cH \otimes \ell^2(\sG)$ generated by operators
\begin{equation}\label{Eqn=Generators}
u_g := \sum_{h \in \sG} 1 \otimes e_{gh, h}, \quad g \in \sG, \qquad \textrm{ and } \qquad \pi(x) := \sum_{h \in \sG}   h^{-1} \cdot x \otimes e_{h,h}, \quad x \in \bA.
\end{equation}
Here the convergence of the sums should be understood in the strong topology. There is also a universal crossed product $\bA \rtimes_u \sG$ for which we refer to \cite{BrownOzawa} (in the case we need it, it turns out to equal the reduced crossed product).

\subsection{Gromov boundary and condition (AO)$^+$}

Let again $(W,S)$ be a Coxeter system which we assume to be hyperbolic (see \cite[Section 5.3]{BrownOzawa}). Let $\Lambda$ be the associated Cayley tree. A geodesic ray starting at a point $\boldw \in \Lambda$ is a sequence $(\boldw, \boldw v_1, \boldw v_1 v_2, \ldots)$ such that $\vert \boldw v_1 \ldots v_n \vert = \vert \boldw \vert + n$. We typically write $\omega = (\omega(0), \omega(1), \ldots)$ for a geodesic ray. Let $\partial W$ be the Gromov boundary of $W$ which is the collection of all geodesic rays starting at the identitiy of $W$ modulo the equivalence $\omega_1 \simeq \omega_2$ iff $\lim_{x,y \rightarrow \infty} dist(\omega_1(x), \omega_2(y)) = 0$. $W \cup \partial W$ may be topologized  as in \cite[Section 5.3]{BrownOzawa}.

Let $W \curvearrowright W$ be the action by means of left translation. The action extends continuously to $W \cup \partial W$ and then restricts to an action $W \curvearrowright \partial W$. We may pull back this action to obtain $W \curvearrowright C(\partial W)$.
As in this section we assumed that $W$ is a hyperbolic group the  action $W \curvearrowright \partial W$ is well-known to be amenable \cite{BrownOzawa} which implies that $C(\partial W) \rtimes_u W = C(\partial W) \rtimes_r W$ and furthermore this crossed product is a nuclear C$^\ast$-algebra. Let $f \in C(\partial W)$, let $\widetilde{f}_1, \widetilde{f}_2 \in C(W \cup \partial W)$ be two continuous extensions of $f$ and let $f_1$ and $f_2$ be their respective restrictions to $W$. Then $f_1 - f_2 \in C_0(W)$. That is, the multiplication map $f_1 - f_2$ acting on $\ell^2(W)$ determines a compact operator. So the assignment $f \mapsto f_1$ is a well-defined $\ast$-homomorphism $C(\partial W) \rightarrow \cB(\ell^2(W)) / \cK$ where $\cK$ are the compact operators on $\ell^2(W)$. It is easy to check that this map is $W$-equivariant and thus we obtain a $\ast$-homomorpism:
\begin{equation}\label{Eqn=PiOne}
\pi_1: C(\partial W) \rtimes_u W \rightarrow \cB(\ell^2(W)) / \cK.
\end{equation}

Let again $W \curvearrowright W$ be the action by means of left translation which may be pulled back to obtain an action $W \curvearrowright \ell^\infty(W)$. Let
\[
\rho: \ell^\infty(W) \rtimes_r W \rightarrow \cB(\ell^2(W))
 \]
 be the $\sigma$-weakly continuous $\ast$-isomorphism determined by $\rho: u_\boldw \mapsto T_\boldw^{(1)}$ and $\rho: \pi(x) \mapsto x$ (see \cite[Theorem 5.3]{VaesJFA}). In fact $\rho$ is an injective map (this follows immediately from \cite[Theorem 2.1]{DeCommer} as the operator $G$ in this theorem equals the multiplicative unitary/structure operator \cite[p. 68]{Takesaki2}).
  Let $C_\infty(W)$ be the C$^\ast$-algebra generated by the projections $P_{\boldw}, \boldw \in W$. Take $f \in C_\infty(W)$ and let $\widetilde{f}$ be the continuous extension of $f$ to $W \cup \partial W$. The map $f \mapsto \widetilde{f}\vert_{\partial W}$ determines a $\ast$-homomorphism $\sigma: C_\infty(W) \rightarrow C(\partial W)$ that is $W$-equivariant. Therefore it extends to the crossed product map
  \[
  \sigma \rtimes_r \Id: C_\infty(W) \rtimes_r W \rightarrow C(\partial W) \rtimes_r W.
  \]

\begin{thm}\label{Thm=APplus}
Let $(W,S)$ be a right-angled hyperbolic Coxeter group and let $q \in [\rho, \rho^{-1}]$, see Theorem \ref{Thm=Factor}. The von Neumann algebra $\cM_q$ satisfies condition (AO)$^+$.
\end{thm}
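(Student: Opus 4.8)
The plan is to take $\bA$ to be the reduced C$^\ast$-algebra $C^\ast\{T_\boldw \mid \boldw \in W\} \subseteq \cM_q$ acting on the standard Hilbert space $\cH = L^2(\cM_q)$ (with modular conjugation $J$), which is $\sigma$-weakly dense in $\cM_q$ by construction. The decomposition Lemma \ref{Lem=TExpansion} is what makes this choice workable: it shows $\bA \subseteq \rho(C_\infty(W) \rtimes_r W)$, since every $T_\boldw$ is a finite sum of products of the operators $T^{(1)}_{\boldw'} = \lambda_{\boldw'}$ (which lie in the image of the group part) and the projections $P_{V\Gamma_0} \in C_\infty(W)$. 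For condition (1) of Definition \ref{Dfn=AO} I would argue that $\bA$ is exact: a hyperbolic $W$ is exact, $C_\infty(W)$ is abelian hence nuclear, so the reduced crossed product $C_\infty(W) \rtimes_r W$ is exact, and exactness passes to the subalgebra $\bA$; as exact C$^\ast$-algebras are locally reflexive, $\bA$ is locally reflexive.

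The heart is condition (2): the construction of a ucp map $\theta \colon \bA \minotimes J\bA J \to \cB(\cH)$ with $\theta(a \otimes b) - ab$ compact. Since $\bA \subseteq \cM_q$ and $J\bA J \subseteq \cM_q'$, the two algebras commute, so $a \otimes b \mapsto ab$ is a genuine $\ast$-homomorphism $m$ on the algebraic tensor product, and composing with the quotient map $\pi_\cK \colon \cB(\cH) \to \cB(\cH)/\cK$ gives $\pi_\cK \circ m \colon \bA \odot J\bA J \to \cB(\cH)/\cK$. The decisive point is to show that this extends continuously to the minimal tensor product. For this I would use the nuclear algebra $\mathcal{D} = C(\delta W) \rtimes_r W$ (nuclear because the boundary action is amenable) together with the $\ast$-homomorphism $\nu \colon \bA \hookrightarrow \rho(C_\infty(W)\rtimes_r W) \xrightarrow{\rho^{-1}} C_\infty(W) \rtimes_r W \xrightarrow{\sigma \rtimes_r \Id} \mathcal{D}$. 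The identity $\pi_\cK \circ \rho = \pi_1 \circ (\sigma \rtimes_r \Id)$ on $C_\infty(W) \rtimes_r W$ — checked on the generators $u_\boldw$ and $\pi(P_\boldv)$, using that a boundary extension of an indicator differs from the indicator by an element of $C_0(W)$ — gives $\pi_1 \circ \nu = \pi_\cK|_\bA$. If moreover $\pi_1(\mathcal{D})$ commutes modulo compacts with $\pi_\cK(J\bA J)$, then $d \otimes b \mapsto \pi_1(d)\,\pi_\cK(b)$ is a well-defined $\ast$-homomorphism on $\mathcal{D} \odot J\bA J$ with commuting ranges; since $\mathcal{D}$ is nuclear, $\mathcal{D} \minotimes J\bA J = \mathcal{D} \otimes_{\max} J\bA J$ and the map is automatically min-continuous. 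Precomposing with $\nu \otimes \Id$ then yields a min-continuous $\ast$-homomorphism $\overline{m} \colon \bA \minotimes J\bA J \to \cB(\cH)/\cK$, $a \otimes b \mapsto \pi_\cK(ab)$.

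Finally I would lift $\overline{m}$ to the desired ucp map. Because $\bA$ is locally reflexive and $\overline{m}$ is min-continuous, the standard lifting argument of Ozawa — approximating $\Id_\bA$ by ucp maps through matrix algebras, lifting the resulting finite-dimensional pieces by Choi--Effros/Arveson, and passing to a point-$\sigma$-weak limit — produces a ucp $\theta \colon \bA \minotimes J\bA J \to \cB(\cH)$ with $\pi_\cK \circ \theta = \overline{m}$, so that $\theta(a \otimes b) - ab \in \ker \pi_\cK = \cK$. Together with the local reflexivity established above, this verifies condition (AO)$^+$ for $\cM_q$.

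The main obstacle is the compact-commutation claim in the middle paragraph: that $\pi_1(\mathcal{D})$ commutes modulo compacts with $\pi_\cK(J\bA J)$. This is exactly the \emph{small at infinity} feature of the Gromov boundary underlying Ozawa's bi-exactness of hyperbolic groups in the case $q=1$; for general $q$ I expect to reduce it, via Lemma \ref{Lem=TExpansion}, to showing that the commutators of the left translations $\lambda_\boldw$ and the boundary multiplication operators with the right translations $J\lambda_\boldv J$ and the conjugated projections $JP_{V\Gamma_0}J$ are compact. Verifying these commutators — and thereby transporting the hyperbolic (AO) machinery from the group von Neumann algebra $\cM_1$ to the Hecke deformation $\cM_q$ — is where the real work lies, and is precisely the point at which the hyperbolicity of $W$ is indispensable.
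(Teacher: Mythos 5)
Your proposal follows essentially the same route as the paper: the same dense subalgebra $\bA_q = C^\ast\{T_\boldw\}$, the same use of Lemma \ref{Lem=TExpansion} to embed $\rho^{-1}(\bA_q)$ into $C_\infty(W)\rtimes_r W$, the same passage to the nuclear boundary crossed product $C(\delta W)\rtimes_r W$ via $\sigma\rtimes_r\Id$, and the same min-$=$-max argument followed by a ucp lift. The one step you leave open --- that $\pi_1(C(\delta W)\rtimes_r W)$ commutes modulo compacts with $\pi_\cK(J\bA_q J)$ --- is not actually ``real work'': after reducing via Lemma \ref{Lem=TExpansion} to right translations and conjugated diagonal projections, it is exactly the small-at-infinity property of the Gromov boundary of a hyperbolic group, and the paper simply cites \cite[Lemma 6.2.8]{GuentnerHigson} for it; so your argument closes once you invoke that lemma. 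Two minor divergences, both harmless: for local reflexivity you go through exactness of the hyperbolic group and of the crossed product, whereas the paper derives exactness of $\bA_q$ from its CBAP via Haagerup--Kraus; and for the lift you propose Ozawa's local-reflexivity lifting argument, whereas the paper observes that both $\pi_1$ and $\pi_2$ are nuclear (each factors through the nuclear algebra $C(\delta W)\rtimes_u W$), so $\pi_1\otimes\pi_2$ is nuclear and Choi--Effros applies directly --- the latter is slightly cleaner, but your version works too.
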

\begin{proof}
We let $\bA_q$ be the unital C$^\ast$-subalgebra of $\cM_q$ generated by operators $T_\boldw, \boldw \in W$. It is easy to see that $\bA_q$ is preserved by the multipliers that we constructed in order to prove that $\cM_q$ had the wk-$\ast$ CBAP, see Section \ref{Sect=Approximation} (indeed these were compositions of radial multipliers -- see Proposition \ref{Prop=Radial} -- and word length projections -- see Proposition \ref{Prop=CutDown}).  Therefore $\bA_q$ has the CBAP, hence by the remarks before \cite[Theorem 2.2]{HaagerupKraus} it is exact. Therefore $\bA_q$ is locally reflexive  \cite{BrownOzawa}, \cite[Chapter 18]{Pisier}.

It remains to prove condition \eqref{Item=AO2} of Definition \ref{Dfn=AO}. By Lemma \ref{Lem=TExpansion} we see that $\bA_q$ is contained in the C$^\ast$-subalgebra of $\cB(\ell^2(W))$ generated by the operators $P_\boldw, T_\boldw^{(1)}$ with  $\boldw \in W$. So $\rho^{-1}(\bA_q)$ is contained in $C_{\infty}(W) \rtimes_r W$ and therefore we may set
\[
\gamma: \bA_q \rightarrow C(\partial W) \rtimes_r W \qquad \textrm{ as } \qquad  \gamma = (\sigma \rtimes_r \Id) \circ \rho^{-1}.
\]
 The mapping $\pi_2: J \bA_q J \rightarrow \cB(\ell^2(W)) \slash \mathcal{K} : b \mapsto b$ is a $\ast$-homomorphism and its image commutes with the image of $\pi_1$ of \eqref{Eqn=PiOne} (as was argued in \cite[Lemma 6.2.8]{GuentnerHigson}). By definition of the maximal tensor product there exists a $\ast$-homomorphism:
\[
( \pi_1 \otimes \pi_2): (C(\partial W) \rtimes_u W) \otimes_{{\rm max}} J \bA_q J \rightarrow \cB(\ell^2(W)) \slash \cK: a \otimes JbJ \mapsto \pi_1(a) JbJ.
\]
We may now consider the following composition of $\ast$-homomorphisms:
\begin{equation}\label{Eqn=Diagram1}
\xymatrix{
\bA_q \minotimes J \bA_q J \ar@{->}[r]^{\!\!\!\!\!\!\!\!\!\!\!\!\!\!\!\!\! \gamma \otimes \id }  &   (C(\partial W) \rtimes_r W) \otimes_{{\rm min}} J \bA_q J  \ar@{->}[d]^{ \simeq }      \\
\cB(\ell^2(W)) \slash \cK &   (C(\partial W) \rtimes_u W) \otimes_{{\rm max}} J \bA_q J   \ar@{^{(}->}[l]^{\!\!\!\!\!\!\!\!\!\!\!\!\!\!\!\!\!\!\!\!\!\!\!\! \pi_1 \otimes \pi_2 }.
}
\end{equation}
By construction this map is given by:
\begin{equation}\label{Eqn=ProductModCompacts}
a \otimes J b J \mapsto a JbJ + \cK, \qquad \textrm{where} \quad a,b \in \bA_q.
\end{equation}
The map $\pi_1$ is nuclear because we already observed that $C(\partial W) \rtimes_u W$ is nuclear. Also $\pi_2$ is nuclear as it equals $ J ( \: \cdot \: ) J \circ \pi_1 \circ \gamma \circ  J ( \: \cdot \: ) J$.
It therefore follows that the mapping  $\pi_1 \otimes \pi_2: (C(\partial W) \rtimes_r W) \otimes_{{\rm min}} J A_q J  \rightarrow \cB(\ell^2(W)) \slash \cK $ in diagram \eqref{Eqn=Diagram1} is nuclear and we may apply the Choi-Effros lifting theorem \cite{ChoiEffros} in order to obtain a ucp lift $\theta: (C(\partial W) \rtimes_r W) \otimes_{{\rm min}} J A_q J  \rightarrow \cB(\ell^2(W))$. Then $\theta \circ (\gamma \otimes \Id)$ together with \eqref{Eqn=ProductModCompacts} witness the result.
\end{proof}

\begin{cor}\label{Cor=NoCartanHyper}
Let $(W,S)$ be an irreducible hyperbolic Coxeter system with $\vert S \vert \geq 3$ and $q \in [\rho, \rho^{-1}]$. Then the Hecke von Neumann algebra $\cM_q$ has no Cartan subalgebra.
\end{cor}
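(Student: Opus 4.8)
The plan is to deduce this corollary by combining the strong solidity and the non-injectivity of $\cM_q$ with the general principle recorded right after Definition \ref{Dfn=StrongSolidity}: a non-injective strongly solid II$_1$ factor admits no Cartan subalgebra. Thus the two substantive ingredients to secure are that $\cM_q$ is strongly solid and that it is non-injective in the present hyperbolic factorial regime; the remainder is a one-line contradiction.

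First I would obtain strong solidity from Isono's Theorem \ref{Thm=AOStrongSolidity}, verifying its hypotheses one by one. Since $(W,S)$ is reduced with $\vert S\vert\geq 3$ and $q\in[\rho,\rho^{-1}]$, Theorem \ref{Thm=Factor} makes $\cM_q$ a factor, and the normal faithful tracial state $\tau$ renders it of type II$_1$. As $S$ is finite, $W$ is countable and $L^2(\cM_q)$ is separable, so $\cM_q$ has separable predual. Condition (AO)$^+$ is exactly Theorem \ref{Thm=APplus} (this is where hyperbolicity of $W$ is used), and the wk-$\ast$ CCAP, hence the wk-$\ast$ CBAP, is Theorem \ref{Thm=CBAP}. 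Theorem \ref{Thm=AOStrongSolidity} then yields that $\cM_q$ is strongly solid.

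Second, non-injectivity of $\cM_q$ is provided by Theorem \ref{Thm=NonInjective2} under precisely the standing hypotheses $\vert S\vert\geq 3$ and $(W,S)$ reduced. Now suppose toward a contradiction that $\cM_q$ contained a Cartan subalgebra $\cP$, that is, a maximal abelian $\ast$-subalgebra with $\Nor_{\cM_q}(\cP)=\cM_q$. Such a $\cP$ is diffuse: a maximal abelian subalgebra of a II$_1$ factor can carry no minimal projection, since compressing $\cP$ by one would produce a one-dimensional maximal abelian subalgebra inside a corner of $\cM_q$, which is again a II$_1$ factor and so infinite-dimensional. Being abelian, $\cP$ is injective. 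Hence $\cP$ is a diffuse injective subalgebra, and strong solidity forces $\Nor_{\cM_q}(\cP)$ to be injective; but $\Nor_{\cM_q}(\cP)=\cM_q$, contradicting non-injectivity. Therefore $\cM_q$ has no Cartan subalgebra.

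The genuine difficulty lies entirely upstream, in Theorems \ref{Thm=APplus}, \ref{Thm=CBAP} and \ref{Thm=NonInjective2}; within this corollary the only points requiring a moment's care are the separability of the predual and the diffuseness of a putative Cartan subalgebra, both of which are routine.
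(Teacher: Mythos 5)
Your proposal is correct and follows exactly the paper's route: the paper's proof of this corollary is a one-line citation of Theorems \ref{Thm=AOStrongSolidity}, \ref{Thm=NonInjective2}, \ref{Thm=CBAP} and \ref{Thm=APplus}, and you have simply spelled out how those results combine (including the routine verification that a Cartan subalgebra is diffuse and injective). No discrepancies.
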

\begin{proof}
This is a consequence of Theorem \ref{Thm=AOStrongSolidity} together with Theorems  \ref{Thm=NonInjective3}, \ref{Thm=CBAP} and \ref{Thm=APplus}.
\end{proof}

\begin{rmk}\label{Rmk=HyperbolicIsNecessary}
In case $W$ is not hyperbolic, it is not necessarily true that the group von Neumann algebra $\cM_1$ is strongly solid. The easiest case is when $\Gamma$ is $K_{2,3}$: the complete bipartite graph with 2+3 vertices. Then the graph product $W = \ast_{K_{2,3}} \mathbb{Z}_2 =(\mathbb{Z}_2 \ast \mathbb{Z}_2) \times (\mathbb{Z}_2 \ast \mathbb{Z}_2 \ast \mathbb{Z}_2)$  contains a copy of $\mathbb{Z} \times \mathbb{F}_2$. Then $\cM_1$ cannot be strongly solid as it contains the group von Neumann algebra of $\mathbb{Z} \times \mathbb{F}_2$. Note that $K_{2,3}$ is not an irreducible graph but  the same argument applies if one adds one point with no edges to $K_{2,3}$.
\end{rmk}

 \section{Absence of Cartan subalgebras} \label{Sect=Cartan}

As we saw in Remark \ref{Rmk=HyperbolicIsNecessary} the absence of Cartan subalgebras for general right-angled  Hecke von Neumann algebras  cannot be proved through strong solidity.  In this section we obtain absence of Cartan subalgebras for some additional Hecke von Neumann algebras through an analysis of amalgamated free products in conjunction with a  theorem by Vaes \cite[Theorem A]{VaesPrims} (see also \cite{IoanaENS} for related results).
We need some terminology first.

\begin{dfn}
Let $\cN, \mathcal{P} \subseteq \cM$ be finite von Neumann algebras. We say that $\cN$ is injective (or amenable) relative to $\mathcal{P}$ if there is a completely positive map $\Phi$ from the basic construction $\langle \cM, e_\mathcal{P} \rangle$ onto $\cN$ such that $\Phi\vert_\cM$ is the  conditional expectation of $\cM$ onto $\cN$. Here $e_{\mathcal{P}}$ is the Jones projection, i.e. the conditional expectation of $\cM$ to $\cP$ on the $L^2$-level.
\end{dfn}

The following Theorem \ref{Thm=Vaes} uses Popa's intertwining by bi-modules technique. For us it suffices that for finite (separable) von Neumann algebras $\cN, \cP \subseteq \cM$ we say that $\cN \prec_{\cM} \cP$ if there exists no sequence of unitaries $w_k$ in $\cN$ such that for all $x, y \in \cM$ we have $\Vert \mathcal{E}_{\cP} (x w_k y) \Vert_2 \rightarrow 0$. 
 The following theorem is a somewhat less general version of \cite[Theorem A]{VaesPrims}. 

\begin{thm}\label{Thm=Vaes}
Let $\cN_i, i = 1,2$ be finite von Neumann algebras with common von Neumann subalgebra $\cB$. Let $\cN = \cN_1 \ast_{\mathcal{B}} \cN_2$ be the (tracial) amalgamated free product. Let $\mathcal{A} \subseteq \mathcal{N}$ be a von Neumann subalgebra that is injective relative to one of the $\cN_i, i =1,2$. Then at least one of the following statements holds true: (1) $\mathcal{A} \prec_{\cN} \mathcal{B}$, (2) There exists $i$ such that $\Nor_{\mathcal{N}}(\mathcal{A}) \prec_\mathcal{N} \mathcal{N}_i$, (3) $\Nor_{\cN}(\mathcal{A})$ is injective relative to $\mathcal{B}$.
\end{thm}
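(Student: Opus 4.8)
The plan is to derive this trichotomy from Popa's deformation/rigidity machinery applied to the canonical malleable deformation of an amalgamated free product, in the spirit of Ioana--Peterson--Popa, Ozawa--Popa and Vaes. Throughout write $\mathcal{P} = \Nor_{\cN}(\mathcal{A})$ and fix $i$ with $\mathcal{A}$ injective relative to $\cN_i$. Since the assertion is that at least one of (1), (2), (3) holds, I would argue contrapositively: assuming that alternatives (1) and (2) both fail, I would establish alternative (3).

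The first step is to install the deformation. For $\cN = \cN_1 \ast_{\mathcal{B}} \cN_2$ there is a standard s-malleable deformation obtained by enlarging each free factor: set $\widetilde{\cN}_i = \cN_i \ast_{\mathcal{B}} \bigl( \mathcal{B} \wot L(\mathbb{Z}) \bigr)$, form $\widetilde{\cN} = \widetilde{\cN}_1 \ast_{\mathcal{B}} \widetilde{\cN}_2 \supseteq \cN$, and take the one-parameter group $(\alpha_t)_{t \in \mathbb{R}}$ of trace-preserving automorphisms of $\widetilde{\cN}$ fixing $\mathcal{B}$ pointwise, together with the order-two automorphism $\beta$ witnessing malleability. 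This deformation satisfies Popa's transversality inequality and $\alpha_t \to \id$ pointwise in $\Vert \cdot \Vert_2$; moreover the associated deformation bimodule is weakly contained in the $\mathcal{B}$-relative coarse $\cN$-$\cN$-bimodule, which is the mechanism that ties uniform convergence of $\alpha_t$ to injectivity relative to $\mathcal{B}$.

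The second step is the dichotomy for $\mathcal{A}$. Here the hypothesis that $\mathcal{A}$ is injective relative to $\cN_i$ is used exactly as the spectral-gap input of Ozawa--Popa: relative amenability yields an $\mathcal{A}$-central state on the basic construction $\langle \cN, e_{\cN_i} \rangle$, and feeding this into the transversality inequality forces the alternative that either $\alpha_t \to \id$ uniformly on the unit ball, $\sup_{x \in (\mathcal{A})_1} \Vert \alpha_t(x) - x \Vert_2 \to 0$ as $t \to 0$, or the deformation detects a copy of $\mathcal{B}$ and Popa's intertwining-by-bimodules gives $\mathcal{A} \prec_{\cN} \mathcal{B}$, i.e. alternative (1). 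Assuming (1) fails, I may therefore assume uniform convergence of $\alpha_t$ on $(\mathcal{A})_1$. The third step propagates this to $\mathcal{P}$: since each unitary $u \in \mathcal{P}$ satisfies $u \mathcal{A} u^\ast = \mathcal{A}$, Popa's argument that uniform malleable convergence spreads from a subalgebra to its normalizer applies, so either $\alpha_t \to \id$ uniformly on $(\mathcal{P})_1$, or the failure of this produces, via $\beta$ and the transversality inequality, a nontrivial intertwiner of $\mathcal{P}$ into a building block, giving $\Nor_{\cN}(\mathcal{A}) \prec_{\cN} \cN_i$, which is alternative (2). Assuming (2) also fails, uniform convergence of $\alpha_t$ holds on $(\mathcal{P})_1$, and the weak containment of the deformation bimodule in the $\mathcal{B}$-relative coarse bimodule then upgrades this proximity to the statement that $\mathcal{P}$ is injective relative to $\mathcal{B}$, which is alternative (3).

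The hard part will be Step 2 and the precise bookkeeping around relative amenability. In particular, extracting uniform convergence on $(\mathcal{A})_1$ from injectivity relative to $\cN_i$ requires the careful ultrapower/spectral-gap estimate that converts an $\mathcal{A}$-central state on $\langle \cN, e_{\cN_i}\rangle$ into control of $\alpha_t$, and one must do this without assuming $\mathcal{B}$, $\cN_i$ factorial or with separable predual only where genuinely needed. The delicate point is that the deformation is adapted to the amalgam $\mathcal{B}$ while the amenability hypothesis is relative to the larger algebra $\cN_i$, so the final identification of uniform convergence with injectivity \emph{relative to} $\mathcal{B}$ (rather than merely relative to $\cN_i$) is where the weak-containment analysis of the deformation bimodule must be pushed through most carefully.
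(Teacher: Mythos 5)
First, a point of comparison: the paper does not prove this statement at all. It is quoted (in slightly specialized form) from \cite{VaesPrims}, where it appears as Theorem A, so there is no in-paper argument against which to measure your proposal; you are in effect proposing a proof of Vaes's theorem. Your sketch correctly identifies the standard toolbox behind results of this type: the Ioana--Peterson--Popa malleable deformation $\widetilde{\cN}=\widetilde{\cN}_1\ast_{\cB}\widetilde{\cN}_2$ with $\widetilde{\cN}_i=\cN_i\ast_{\cB}(\cB\,\overline{\otimes}\,L(\mathbb{Z}))$, Popa's transversality, the weak containment of the deformation bimodule in the $\cB$-relative coarse bimodule $L^2(\cN)\otimes_{\cB}L^2(\cN)$, and relative injectivity phrased as an $\cA$-central state on the basic construction.

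However, Step 2 is false as stated, and the failure is structural rather than a matter of bookkeeping. You claim that injectivity of $\cA$ relative to $\cN_i$ forces either $\sup_{x\in(\cA)_1}\Vert\alpha_t(x)-x\Vert_2\to 0$ or $\cA\prec_{\cN}\cB$. Take $\cN=L(\mathbb{F}_2)=L(\mathbb{Z})\ast L(\mathbb{Z})$ with $\cB=\mathbb{C}$ and $\cA=L(\langle ab\rangle)$, the diffuse abelian algebra of the cyclic subgroup generated by the product of the two free generators. Then $\cA$ is amenable, hence injective relative to $\cN_1$, and $\cA\not\prec_{\cN}\mathbb{C}$ since $\cA$ is diffuse; yet $\alpha_t$ cannot converge uniformly on $(\cA)_1$, because by the Ioana--Peterson--Popa intertwining theorem such uniform convergence for a subalgebra not embeddable into $\cB$ would force its quasi-normalizer (which here equals $\cA$ itself, $\langle ab\rangle$ being maximal cyclic and hence malnormal in $\mathbb{F}_2$) into one of the two free factors, and $L(\langle ab\rangle)$ embeds into neither $L(\langle a\rangle)$ nor $L(\langle b\rangle)$. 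So both horns of your dichotomy fail simultaneously, even though the theorem's conclusion (alternative (3)) does hold in this example via strong solidity of $L(\mathbb{F}_2)$. The root cause is exactly the mismatch you defer to the end as a ``delicate point'': the Ozawa--Popa spectral-gap mechanism converts amenability of $\cA$ relative to $\cQ$ into uniform convergence of the deformation only when the deformation bimodule is weakly contained in the $\cQ$-relative coarse bimodule, whereas here the deformation is adapted to $\cB$ and the amenability hypothesis is only relative to the strictly larger $\cN_i$ (a strictly weaker condition than amenability relative to $\cB$). Since Steps 2 and 3 both hinge on first securing uniform convergence on $(\cA)_1$, and since in precisely the regime where alternative (3) is the operative conclusion that uniform convergence genuinely fails, the architecture of the proposal does not yield the theorem. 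Vaes's actual argument must work directly with the $\cN$-$\cN$-bimodule $L^2\langle\cN,e_{\cN_i}\rangle$, decompose it via the free product structure, and extract conclusions about $\Nor_{\cN}(\cA)$ without ever passing through uniform convergence on $\cA$; that is the genuinely new content of \cite{VaesPrims} and it is the piece missing from your proposal.
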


Recall that for a graph $\Gamma$ and $r \in V\Gamma$ we have $\Link(r) = \{ s \in V \Gamma \mid (r,s) \in E\Gamma \}$ and $\Star(r) = \Link(r) \cup \{ r \}$. We include the following lemma to show that part of the condition in Theorem \ref{Thm=NoCartan} can always be achieved. 





 \begin{lem}\label{Lem=Disconnect}
Every irreducible   graph $\Gamma$ with $\vert V \Gamma \vert \geq 3$ contains a vertex $r \in V\Gamma$ such that $V\Gamma - \Star(r)$ contains at least two points.
\end{lem}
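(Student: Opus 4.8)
The plan is to pass to the complement graph and reduce the statement to an elementary degree count. Write $\Gamma^{c}$ for the complement of $\Gamma$, so that $V\Gamma^{c} = V\Gamma$ and a pair $(v,w)$ with $v \neq w$ is an edge of $\Gamma^{c}$ exactly when it is \emph{not} an edge of $\Gamma$. By the very definition of a reduced Coxeter system, $\Gamma^{c}$ is connected, and this is the only place where the hypothesis ``reduced'' enters.

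First I would translate the quantity $V\Gamma - \Star(v)$ into the language of $\Gamma^{c}$. For a fixed $v$, a vertex $w$ lies in $V\Gamma - \Star(v)$ precisely when $w \neq v$ and $(v,w) \notin E\Gamma$, which is to say precisely when $(v,w) \in E\Gamma^{c}$. Hence $V\Gamma - \Star(v)$ is exactly the set of neighbours of $v$ in $\Gamma^{c}$, and so its cardinality equals $\deg_{\Gamma^{c}}(v)$. The lemma therefore reduces to the purely graph-theoretic assertion that a connected graph on at least three vertices has a vertex of degree at least two, applied to $\Gamma^{c}$.

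To establish this assertion I would use a handshake/edge-count argument. A connected graph on $n$ vertices has at least $n-1$ edges, so the sum of its vertex degrees is at least $2(n-1)$. If every vertex had degree at most $1$, this sum would be at most $n$; but $2(n-1) > n$ whenever $n \geq 3$, a contradiction. Thus some vertex $v$ of $\Gamma^{c}$ satisfies $\deg_{\Gamma^{c}}(v) \geq 2$, and for this $v$ the set $V\Gamma - \Star(v)$ contains at least two points, as required. Equivalently one could pass to a spanning tree of $\Gamma^{c}$ and note that a tree on $\geq 3$ vertices cannot consist only of leaves.

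There is no serious obstacle here: the argument is completely elementary once the reformulation is in place. The only point requiring a moment's care is the translation step together with the use of connectedness of the complement. It is genuinely essential that $\Gamma$ is reduced, since for a graph whose complement is disconnected (for instance a complete graph, whose complement has no edges) every $\Star(v)$ exhausts $V\Gamma$, so that $V\Gamma - \Star(v) = \emptyset$ and the conclusion fails.
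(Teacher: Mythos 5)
Your proof is correct, and it takes a cleaner route than the paper's. You make the reformulation explicit: reducedness means the complement $\Gamma^{c}$ is connected, $V\Gamma - \Star(v)$ is precisely the $\Gamma^{c}$-neighbourhood of $v$, and the lemma becomes the standard fact that a connected graph on $n \geq 3$ vertices has a vertex of degree at least $2$, which you dispatch by the handshake count $2|E| \geq 2(n-1) > n$. The paper instead argues directly in $\Gamma$: it picks an arbitrary $v$, notes $\Star(v) \neq V\Gamma$ by reducedness, and, in the only problematic case where $V\Gamma - \Star(v)$ is a single point $w$, shows that this $w$ itself works — there must be some $u \in \Link(v)$ not adjacent to $w$, as otherwise $\{v,w\}$ would be a connected component of the complement, so $v, u \in V\Gamma - \Star(w)$. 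The two arguments use the same underlying input (connectedness of $\Gamma^{c}$) but package it differently: the paper's version is constructive about which vertex to take, while yours isolates the graph-theoretic content as a named elementary fact and is easier to verify at a glance. Both are complete; your closing remark that the complete graph shows reducedness is essential matches the paper's implicit use of that hypothesis.
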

\begin{proof}
Pick some random point $r \in V\Gamma$. We cannot have $\Star(r) = V\Gamma$ because then $\Gamma$ would not be irreducible. So there is at least one point $w \in V\Gamma - \Star(r)$. If there is another point in $V\Gamma - \Star(r)$ then we are done, so we assume that $w$ is the only point in $V\Gamma - \Star(r)$. This implies that $\Link(r)$ is non-empty. $\Star(w)$ does not contain $r$ as $w \not \in \Star(r)$. Also there must be at least one point $u \in \Link(r)$ (which was non-empty!) that is not connected to $w$ because if this is not the case then every two elements in $\Link(r)$ and $\{r, w\}$ would be connected so that $\Gamma$ is not irreducible. In all we proved that $w$ has the property that $V \Gamma - \Star(w)$ contains at least two elements, namely $r$ and $u$.
\end{proof}

We recall the following definitions form \cite{CaspersFima}. 
\begin{dfn}
Let $\Gamma$ be a graph and let $w = w_1 \ldots w_n$ be a word with letters in $V\Gamma$. Suppose that $w_i = w_j$. We say that the $i$-th and $j$-th letter of $w$ are separated if there is a  $k$ with  $i < k < j$ such that $w_k \not \in \Star(w_i)$. If every two (equal) letters in $w$ are separated then $w$ is called {\it reduced}. 
\end{dfn}

\begin{dfn}\label{Dfn=ReducedOp}
	Let $\Gamma$ be a graph and for $s \in V\Gamma$ let $\cM(s)$ be a von Neumann algebra with normal faithful tracial state $\tau_s$. Let $\cM(s)^\circ = \{ a \in \cM(s)\mid \tau_s(a) = 0\}$.  Let $a = a_1 \ldots a_n$ with $a_i \in \cM(s_i)^\circ$ be an operator in the graph product von Neumann algebra $\star_{s \in V\Gamma} \cM(s)$. Then $a$ is called {\it reduced} if the word $s_1 \ldots s_n$ is reduced. The word $s_1 \ldots s_n$ is then called the {\it type} of $a$. We also say that two operators $a_i$ and $a_j$ of the same type $s \in V\Gamma$ are separated if there exists $i < k < j$ such that the type of $a_k$ is not in $\Star(s)$.
\end{dfn}

 \begin{dfn}
 	An inclusion of tracial von Neumann algebras $\cB \subseteq \cN$ is called mixing if for every sequence $b_n$ in $\cB$ with $\Vert b_n \Vert \leq 1$ and $b_n \rightarrow 0$ weakly we have that   $\Vert \cE_{\cB}(x b_n y) \Vert_2 \rightarrow 0$ for all $x,y \in \cN \ominus \cB$.
 \end{dfn}

For the proof of the following theorem we need a  condition assuming the existence of a specific point $r \in S$.  The condition is chosen such that in Claim 2 of the proof of Theorem \ref{Thm=NoCartan} we get a mixing inclusion of von Neumann algebras. This gives examples of Hecke von Neumann algebras of non-hyperbolic Coxeter groups that do not possess Cartan subalgebras. Indeed examples can easily be constructed; for example if there exists a point $r \in S$ such that $\Link(r)$ is the graph of a non-hyperbolic Coxeter group and if there are few edges between $\Link(r)$ and $V\Gamma - \Star(r)$ (i.e. such that the condition below is satisfied).  Though we believe that the theorem should hold without this condition we were unable to find a complete proof. 

\begin{thm}\label{Thm=NoCartan}
	Let $(W,S)$ be an irreducible right-angled Coxeter group with $\vert S \vert \geq 3$.  Let $q \in [\rho, \rho^{-1}]$. Assume that there is an element $r \in S$ such that 
	\begin{itemize}
		\item $V\Gamma - \Star(r)$ contains at least two points;
		\item For every $s,t \in \Link(r)$ such that $(s,t) \not \in E\Gamma$ we have that
		\[
		\Link(s) \cap \Link(t) \cap (V\Gamma - \Star(r)) = \emptyset.
		\]
	\end{itemize}
	Then the Hecke-von Neumann algebra $\cM_q$ does not have a Cartan subalgebra.
\end{thm}
\begin{proof}

	Let $\Gamma = (V \Gamma, E\Gamma)$ be the graph of $(W,S)$. By Corollary \ref{Cor=GraphDec} we get a graph product decomposition $\cM_q = \oast_{s \in V\Gamma} \cM_q(s)$ with $\cM_q(s)$ the Hecke-von Neumann algebra associated with the Coxeter subsystem generated by just $s$ (so it is 2-dimensional by Section \ref{Sect=Universal}).
	Choose $r \in V\Gamma$  satisfying the conditions   of the statement of the theorem.  Put 
	\[
	\cN_1 = \oast_{s \in \Star(r)} \cM_q(s), \quad \cN_2 = \oast_{s \in V\Gamma - \{ r \} }  \cM_q(s), \quad {\rm and } \quad \cB = \oast_{s \in \Link(r)} \cM_q(s).
	\]
	Here $\Link(r), \Star(r)$ and $V\Gamma - \{ r \}$ are all viewed as full subgraphs of $\Gamma$, i.e. a subgraph for which two vertices share an edge iff  they share an edge in $\Gamma$. 
	Simply write $\cM$ for $\cM_q$.   By  \cite[Theorem 2.26]{CaspersFima} we get:
	\[
	\cM = \cN_1  \ast_{\cB} \cN_2.
	\]
	Now suppose that $\mathcal{A} \subseteq \cM$ is a Cartan subalgebra. We are going to derive a contradiction by showing that any of the three alternatives of Theorem \ref{Thm=Vaes} is absurd.
	
	\vspace{0.3cm}

\noindent {\bf  Claim 1:} We cannot have  $\Nor_{\cM}(\mathcal{A}) \prec_{\mathcal{M}} \mathcal{N}_i$ for either $i = 1,2$.

\vspace{0.3cm}

\noindent {\it Proof of the claim.} As $\mathcal{A}$ is assumed to be Cartan we need to prove that $\cM \not \prec_{\mathcal{M}} \mathcal{N}_i$. Let $t \in V\Gamma - \Star(r)$. Then the subalgebra of $\cM$ generated by $\cM_q(r)$ and $\cM_q(t)$ is the tracial free product $\cM_q(r) \ast \cM_q(t)$. Take unitaries $u \in \cM_q(r)$ and $v \in \cM_q(t)$ with trace 0. Put $w_k = (uv)^k$ which then is a unitary in $\cM_q(r) \ast \cM_q(t)$ with trace 0.

  We need to show that for all $x,y \in \cM$ we have $\Vert \mathcal{E}_{\cN_i}( x w_k y ) \Vert_2 \rightarrow 0$. Recall that $\cM_q(s)^\circ$ is the space of elements $z \in \cM_q(s)$ with trace 0. By a density argument we may and will assume that $x = x_{1} \ldots x_k$ and $y = y_1 \ldots y_l$ are reduced operators with $x_i, y_i \in \cM_q(s)^\circ$ for some $s$ (see Definition \ref{Dfn=ReducedOp} or \cite[Definition 2.10]{CaspersFima} for the notion of reduced operators).
  Take a decomposition $x = x'a$ where $x' = x_1 \ldots x_{m}$ and  $a = x_{m+1} \ldots x_k$, with $x_{m+1}, \ldots, x_k \in \cM_q(r)^\circ \cup \cM_q(t)^\circ$. We may assume that this decomposition is taken in such a way that the length of $a$ is maximal, in other words: the end of the expression $x'$ has (after possible commutations) no factors $x_i$ that come from $\cM_q(r)^\circ$ and $\cM_q(t)^\circ$. We take a similar decomposition for $y$.  We may write $y = b y'$ with $y' = y_{n+1} \ldots y_l$ and $b = y_1 \ldots y_n$ with $y_i, 1 \leq i \leq n$ elements of either $\cM_q(r)^\circ$ and $\cM_q(t)^\circ$. Again we may assume that this decomposition is maximal meaning that (after possible commutations) the expression $y'$ does not have factors at the start that come from either $\cM_q(r)^\circ$ or $\cM_q(t)^\circ$.

  Now write $x w_k y = x'(a w_k b) y'$. For $k$ big (in fact $k \geq m +n + 1$ suffices) we get that $a w_k b$ is not contained in $\cN_i$ for neither $i = 1, 2$. Indeed $a$ and $b$ can never cancel all the occurences of $u$ and $v$ in $w_k = (uv)^k$ so that $a w_k b \in \cM_q(r) \ast \cM_q(t) \ominus (\cM_q(r) \cup \cM_q(t))$. So $x w_k y =  x'(a w_k b) y' \not \in \cN_i$ for either $i = 1,2$.    Therefore $\Vert\mathcal{E}_{\cN_i}(x w_k  y) \Vert_2 \rightarrow 0$ as $k \rightarrow \infty$.

 \vspace{0.3cm}

\noindent {\bf Claim 2:} We do not have $\mathcal{A} \prec_{\cM} \cB$.

\vspace{0.3cm}

\noindent {\it Proof of the claim.}  
Firstly we check that the inclusion $\cB \subseteq \cN_2$ is mixing. Let $b_n$ be a sequence in $\cB$ with $\Vert b_n \Vert \leq 1$ such that $b_n \rightarrow 0$ weakly. Take $x,y \in \cN_2 \ominus \cB$. By linearity and density we may assume that both $x$ and $y$ are reduced operators. In particular write a reduced expression $x = x_1 \ldots x_n$ with $x_i \in \cM_q(s_i)^\circ$ for some $s_i \in V\Gamma - \{ r\}$ and $1 \leq i \leq n$. Since $x$ is not in $\cB$ let $x_{i_0}$ be such that $s_{i_0} \not \in \Link(r)$. Let $V\Lambda$ be the set of all vertices in $\Link(r)$ that share an edge with $s_{i_0}$. Let $\Lambda$ be the full subgraph of $\Gamma$ with edge set $V\Lambda$. Then $\Lambda$ must be complete (i.e. every two vertices share an edge) because otherwise this would contradict the assumptions on $r$. This means that $\widetilde{\cB} := \star_{s \in V\Lambda} \cM_q(s)  = \otimes_{s \in V\Gamma} \cM_q(s)$ is finite dimensional, as $\cM_q(s)$ is 2-dimensional, see Section \ref{Sect=Universal}. This in turn implies that 
$\Vert \cE_{\widetilde{\cB}}(b_n) \Vert_2 \rightarrow 0$ (indeed, $b_n$ is bounded and converges to 0 weakly, hence $\sigma$-weakly; so $\cE_{\widetilde{\cB}}(b_n) \rightarrow 0$ $\sigma$-weakly and hence in the $\Vert \: 
\cdot \: \Vert_2$-norm, by finite dimensionality).
Now we have
\[
\cE_{\cB}(x b_n y ) = 
\cE_{\cB}(x (b_n - \cE_{\widetilde{\cB}}(b_n)  ) y ) + 
\cE_{\cB}(x  \cE_{\widetilde{\cB}}(b_n)  y ),
\]  
where the second summand thus converges to 0 in the $\Vert \: \cdot \: \Vert_2$-norm as $n \rightarrow \infty$. Further $\cE_{\cB}(x (b_n - \cE_{\widetilde{\cB}}(b_n)  ) y ) = 0$ for every $n$ as the operator  $x_{s_{i_0}}$ is separated from  any other operator of type $s_{i_0}$. So this shows that   $\Vert \cE_{\cB}(x b_n y )  \Vert_2  = \Vert 
\cE_{\cB}(x  \cE_{\widetilde{\cB}}(b_n)    y ) \Vert_2 \rightarrow 0$. This concludes our claim that the inclusion $\cB \subseteq \cN_2$ is mixing.

If $\mathcal{A} \prec_{\cM} \cB$ then we certainly have $\mathcal{A} \prec_\cM \cN_2$. But then by \cite[Lemma 9.4]{IoanaENS} and the previous paragraph which shows that the inclusion $\cN_i \subseteq \cM$ is mixing,   we get that also $\Nor_{\cM}(\cA)  \prec_\cM \cN_2$. However this is impossible by Claim 1.

 \vspace{0.3cm}

\noindent {\bf Claim 3:} $\cM$ is not relatively injective with respect to $\mathcal{B}$.

\vspace{0.3cm}

\noindent {\it Proof of the claim.} Recall our choice of $r \in V\Gamma$ at the start of the proof. Let $t_1,t_2$ be two different points in $V\Gamma - \Star(r)$. Let $\Lambda$ be the full subgraph of $\Gamma$ with vertex set $\{r, t_1, t_2 \}$. Let $\cN = \star_{s \in V\Lambda} \cM_q(s)$.
Note that $\cN \cap \cB = \mathbb{C}$.
Suppose that $\cM$ were to be relatively injective with respect to $\mathcal{B}$. Then there exists a (possibly non-normal) conditional expectation $\Phi: \langle \cM, e_{\mathcal{B}} \rangle \rightarrow \cM$. We shall prove that this implies that $\cN$ is injective.  

Let $A$ be the set of all reduced words $\boldw$ with letters in $V\Gamma$ that do not end on letters in $\Link(r)$ and that do not start with letters in $\{ r, t_1, t_2\}$, meaning that  for each $s \in \Link(r)$ the word $\boldw s$ is reduced and for each $s \in  \{ r, t_1, t_2\}$ the word $s \boldw$ is   reduced. For each word $\boldw \in W$ let $X_{\boldw}$ be a maximal set of reduced operators in $\cM$ of type $\boldw$ that form an orthonormal system in $L^2(\cM)$. Let $x \in X_{\boldw}, x' \in X_{\boldw'}$ with $\boldw, \boldw' \in A$ and $x \not = x'$. The spaces spanned by $\cN x \cB$ and $\cN x' \cB$ are orthogonal in $L^2(\cM)$ and invariant subspaces for $\cN$. Moreover, the projection\footnote{
Indeed $p_x$ is a projection: clearly $p_x^\ast = p_x$. Further, by assumption  on $x =x_1 \ldots x_k$ we have for $n \in \cN$ that $nx$ is a reduced operator. Take  $b \in \cB$ of trace 0.  The word $n_i x b$ is then reduced. In order to determine the conditional expectation  $\cE_{\cB}$ of  $x^\ast n_i^\ast  n_j x b$ one needs to write  $x^\ast n_i^\ast  n_j x b$ as a sum of reduced operators and delete all terms that are not in $\cB$. But the only such terms are the ones where $n_i^\ast$ annihilates $n_j$ and where each $x_i^\ast$ annihilates $x_i$. That is, 
\[
\cE_{\cB}(x^\ast n_i^\ast  n_j x b) = \tau(n_i^\ast n_j) \tau(x^\ast x)  b = \delta_{i,j} b. 
\]
Similarly, in order to determine $\cE_{\cB}(x^\ast n_i^\ast  n_j x b)$ one writes  $x^\ast n_i^\ast  n_j x$ as a reduced expression and filters all operators that are in $\cB$. Using that $x$ does not end on letters in $\cB$, this can only happen if $n_i^\ast$ annihilates the letter $n_j$ and each $x_i^\ast$ annihilates $x_i$. That is, 
\[
\cE_{\cB}(x^\ast n_i^\ast  n_j x) = \tau(n_i^\ast n_j) \tau(x^\ast x)   = \delta_{i,j}.
\]  
So we conclude $\cE_{\cB}(x^\ast n_i^\ast  n_j x b) = \delta_{i,j} b$ for any $b \in \cB$. This gives  $e_{\mathcal{B}} x^\ast n_i^\ast  n_j x e_{\mathcal{B}} = \delta_{i,j} e_{\mathcal{B}}$.
 Then
\[
p_x^2 = \sum_{i,j \in I}  n_i x e_{\mathcal{B}} x^\ast n_i^\ast
 n_j x e_{\mathcal{B}} x^\ast n_j^\ast
 = \sum_{i \in I}  n_i x e_{\mathcal{B}}   x^\ast n_i^\ast = p_x. 
\]
The image of $p_x$ is clearly contained in $\overline{ {\rm span} \: \cN x \cB}^{\Vert \: \Vert_2}$. Finally for a vector $nxb, n \in \cN, b \in \cB$ we have
\[
\begin{split}
p_x (nxb) = & \sum_{i \in I}  n_i x e_{\mathcal{B}}   x^\ast n_i^\ast nxb 
 =    \sum_{i \in I}  n_i x  \tau( x^\ast x) \tau(n_i^\ast n)   b =  \sum_{i \in I}  n_i x    \tau(n_i^\ast n)   b = nxb. 
\end{split}
\]
} 
of $L^2(\cM)$ onto $\overline{ {\rm span} \: \cN x \cB}^{\Vert \: \Vert_2}$ is given by 
\[
p_x = \sum_{i \in I}  n_i x e_{\mathcal{B}} x^\ast n_i^\ast,
\]
where we have chosen $n_i, i \in I$ to be elements of $\cN$ that form an orthonormal basis of $L^2(\cN)$. In particular $p_x \in \langle \cM, e_{\cB} \rangle$. We have that the projections $p_x, x \in X_{\boldw}, \boldw \in A$ commute with $\cN$ and they  sum up to 1 as 
\[
L^2(\cM) =  \bigoplus_{\boldw \in A, x \in X_{\boldw}} \overline{ {\rm span} \: \cN x \cB}^{\Vert \: \Vert_2}.
\]
For $\boldw \in A, x \in X_{\boldw}$ set,
\[
p_x' =   x e_{\mathcal{B}} x^\ast.
\]
Similarly, $p_x'$ is the projection onto $\overline{ {\rm span} \: x \cB}^{\Vert \: \Vert_2}$ and $p_x' \leq p_x$. 
 We claim that the von Neumann algebra generated by $p_x \cN p_x$ and $p_x'$ is homogeneous of type I. In order to do so note that there is a unitary\footnote{
Indeed $U_x$ is unitary as 
\[
\Vert \sum_{i} n_i x b_i \Vert_2^2 =
\sum_{i, j}  \tau( b_j^\ast  x^\ast  n_j^\ast n_i x b_i  )
=  \sum_{i,j} \tau(n_j^\ast n_i) \tau( b_j^\ast b_i ) 
= \Vert \sum_i n_i \otimes b_i \Vert_2^2,
\] 
where the second equality uses that $n_i x b_i$ is reduced by definition of $x$ and that $\tau(x^\ast x) = 1$ as $x$ had norm 1 in $L^2(\cM)$. 
} map:
\[
U_x: \overline{ {\rm span} \:  \cN x \cB}^{\Vert \: \Vert_2}  \rightarrow  L^2(\cN) \otimes L^2(\cB): n x b \mapsto n \otimes b.
\]
We have $U_x n U_x^\ast = n \otimes \Id_{L^2(\cB)}$ and  $U_x p_x' U_x^\ast = p_\Omega \otimes \Id_{L^2(\cB)}$ where $p_\Omega$ is the projection onto $\Omega := 1_{\cN} \in L^2(\cN)$. So that the von Neumann algebra   $U_x  \langle p_x \cN p_x, p_x' \rangle U_x^\ast$ is isomorphic to $\cB(L^2(\cN)) \otimes \Id_{L^2(\cB)}$, which is homogeneous of type I.

Now consider $\Psi:  \langle \cM, e_{\mathcal{B}} \rangle  \rightarrow^{\Phi}  \cM \rightarrow^{\mathcal{E}_\cN} \cN$. This is a conditional expectation for the inclusion $\cN \rightarrow \langle \cM, e_{\mathcal{B}} \rangle$. Let $\mathcal{P}$ be the subalgebra of $\langle \cM, e_{\mathcal{B}} \rangle$ that is generated by all $p_x \cN p_x$ and  $p_x'$ with $x \in X_\boldw, \boldw \in A$. The previous paragraph shows that $\cP = \bigoplus_{x \in X_{\boldw} ,\boldw \in A } \langle p_x \cN p_x, p_x' \rangle$ is homogeneous of type I. Restricting $\Psi$ to $\cP$  gives a conditional expectation for the inclusion $\cN \rightarrow \cP$ (recall that $\cN$ is contained in $\cP$ as the projections $p_x$ sum up to 1). Hence $\cN$ is an expected subalgebra of a homogeneous type I algebra. As homogeneous type I algebras are expected subalgebras of a type I factor we conclude that $\cN$ is injective.

\vspace{0.3cm}

\noindent {\it Remainder of the proof.}  Now Theorem \ref{Thm=Vaes} implies that either (1) $\Nor_{\cM}(\cA) \prec_\cM \cN_i$ for either $i =1$ or $i =2$; (2) $\mathcal{A} \prec_\cM \cB$; (3) $\cM$ is injective relative to $\cB$. The three claims above rule out all of these possibilities showing that $\cM$ does not possess a Cartan subalgebra.

\end{proof}

\bigskip

\footnotesize

\noindent
{\sc Martijn Caspers \\
Utrecht University, Budapestlaan 6, 3584 CD Utrecht, The Netherlands
\em E-mail address: \tt m.p.t.caspers@uu.nl}


\begin{thebibliography}{9}



\bibitem[BoFe84]{BozejkoFendler}
 M.  Bozejko, G. Fendler,
 \emph{Herz-Schur multipliers and completely bounded multipliers of the Fourier algebra of a locally compact group},
 Boll. Un. Mat. Ital. A (6) {\bf 3} (1984), no. 2, 297--302.

\bibitem[BoSp94]{BozejkoSpeicher}
  M. Bozejko,  R. Speicher,
   \emph{Completely positive maps on Coxeter groups, deformed commutation relations, and operator spaces},
    Math. Ann. {\bf 300} (1994), no. 1, 97--120.

\bibitem[BrOz08]{BrownOzawa}
  N. Brown,  N. Ozawa,
  \emph{C$^\ast$-algebras and finite-dimensional approximations},
   Graduate Studies in Mathematics, 88. American Mathematical Society, Providence, RI, 2008. xvi+509 pp.

\bibitem[CaFi15]{CaspersFima}
  M. Caspers, P. Fima,
  \emph{Graph products of operator algebras}, Journal of Noncommtuative Geometry (to appear),
  arXiv: 1411.2799.

\bibitem[ChSi13]{ChifanSinclair}
  I. Chifan, T. Sinclair,
  \emph{On the structural theory of II$_1$ factors of negatively curved groups},
  Ann. Sci. Ecole Norm. Sup. {\bf 46} (2013), 1--34.

\bibitem[CSU13]{ChifanSinclairUdrea}
 I. Chifan, T. Sinclair, B. Udrea,
 \emph{On the structural theory of II$_1$ factors of negatively curved groups, II. Actions by product groups.}
  Adv. Math. {\bf 245} (2013), 208--236.

\bibitem[ChEf76]{ChoiEffros}
  M.D. Choi,  E.G. Effros,
  \emph{The completely positive lifting problem for C$^\ast$-algebras},
   Ann. of Math. (2) {\bf 104} (1976), no. 3, 585--609.

\bibitem[Com11]{DeCommer}
  K. de Commer,
  \emph{Galois coactions and cocycle twisting for locally compact quantum groups},
  J. Operator Theory {\bf 66} (1) (2011), 59--106.


\bibitem[Con76]{ConnesClassification}
  A. Connes,
  \emph{Classification of injective factors. Cases II$_1$, II$_\infty$, III$_\lambda$, $λ\not = 1$},
   Ann. of Math. (2) {\bf 104} (1976), no. 1, 73--115.

\bibitem[Dav08]{Davis}
  M. W. Davis,
  \emph{The Geometry and Topology of Coxeter Groups},
   Princeton University Press, April 2008.

\bibitem[DDJB07]{DymaraEtAl}
  M. W. Davis, J. Dymara, T. Januszkiewicz, B. Okun,
  \emph{Weighted $L^2$-cohomology of Coxeter groups},
   Geom. Topol. {\bf 11} (2007), 47--138.


\bibitem[Dym06]{Dymara}
 J. Dymara,
 \emph{Thin buildings},
  Geom. Topol. {\bf 10} (2006), 667--694.



\bibitem[EfRu00]{EffrosRuan}
  E. Effros, Z.-J. Ruan,
   \emph{Operator spaces},
    London Mathematical Society Monographs. New Series, 23. The Clarendon Press, Oxford University Press, New York, 2000. xvi+363 pp.

\bibitem[FeMo77]{FeldmanMoore}
  J. Feldman, C.C. Moore,
  \emph{Ergodic equivalence relations, cohomology, and von Neumann algebras. I and II.},
   Trans. Amer. Math. Soc. {\bf 234} (1977), 289--324, 325--359.

\bibitem[Gar15]{Garncarek}
  L. Garncarek,
  \emph{Factoriality of Hecke-von Neumann algebras of right-angled Coxeter groups},
 J. Funct. Anal. {\bf 270} (2016), no. 3, 1202--1219.

\bibitem[Gre90]{Green}
  E.R. Green,
  \emph{Graph products},
  PhD thesis, University of Leeds, 1990, http://ethesis.whiterose.ac.uk/236.

\bibitem[HiGu04]{GuentnerHigson}
   N. Higson, E. Guentner,
   \emph{Group C$^\ast$-algebras and K-theory},
    Noncommutative geometry, 137–251, Lecture Notes in Math., 1831, Springer, Berlin, 2004.

\bibitem[Haa78]{HaagerupExample}
  U. Haagerup,
  \emph{An example of a nonnuclear C$^\ast$-algebra, which has the metric approximation property},
   Invent. Math. {\bf 50} (1978/79), no. 3, 279--293.

\bibitem[HaKr94]{HaagerupKraus}
  U. Haagerup, J. Kraus,
  \emph{Approximation properties for group C$^\ast$-algebras and group von Neumann algebras},
  Trans. Amer. Math. Soc. {\bf 344} (1994), no. 2, 667--699.


\bibitem[HoIs15]{IsonoHoudayer}
  C. Houdayer, Y. Isono,
 \emph{Unique prime factorization and bicentralizer problem for a class of type III factors},
 Adv. Math. {\bf 305} (2017), 402--455.




\bibitem[Ioa15]{IoanaENS}
  A. Ioana,
  \emph{Cartan subalgebras of amalgamated free product $II_1$ factors. With an appendix by Ioana and Stefaan Vaes},
   Ann. Sci. Ecole Norm. Sup. (4) {\bf 48} (2015), 71--130.

\bibitem[IPV13]{IoanaPopaVaes}
  A. Ioana, S. Popa, S. Vaes,
  \emph{A class of superrigid group von Neumann algebras},
  Ann. of Math. (2) {\bf 178} (2013), no. 1, 231--286.

\bibitem[Iso15]{IsonoExample}
  Y. Isono,
  \emph{Examples of factors which have no Cartan subalgebras},
   Trans. Amer. Math. Soc. {\bf 367} (2015), 7917--7937.

\bibitem[Jim86]{Jimbo}
  M. Jimbo,
  \emph{A $q$-analogue of $U(gl(N+1))$, Hecke algebra, and the Yang-Baxter equation},
  Lett. Math. Phys. {\bf 11} (1986), no. 3, 247–252.

\bibitem[Jon85]{JonesKnot}
  V. Jones,
  \emph{A polynomial invariant for knots via von Neumann algebras},
  Bull. Amer. Math. Soc. (N.S.) {\bf 12} (1985), no. 1, 103--111.


\bibitem[JNR09]{JungeNeufangRuan}
  M. Junge, M. Neufang, Z.-J. Ruan,
  \emph{A representation theorem for locally compact quantum groups},
  Internat. J. Math. {\bf 20} (2009), no. 3, 377--400.


\bibitem[Kas95]{Kassel}
  C. Kassel,  \emph{Quantum groups},
   Graduate Texts in Mathematics, 155. Springer-Verlag, New York, 1995. xii+531 pp.



\bibitem[Oza08]{OzawaWeak}
   N. Ozawa,
   \emph{Weak amenability of hyperbolic groups},
    Groups Geom. Dyn. {\bf 2} (2008), no. 2, 271--280.


\bibitem[OzPo04]{PopaOzawa}
  N. Ozawa, S. Popa,
  \emph{Some prime factorization results for type II$_1$ factors},
   Invent. Math. {\bf 156} (2004), 223--234.


\bibitem[OzPo10]{OzawaPopaII}
  N. Ozawa, S. Popa,
  \emph{On a class of II$_1$ factors with at most one Cartan subalgebra},
   Ann. of Math. (2), {\bf 172} (2010), 713--749.

\bibitem[Pis02]{Pisier}
  G. Pisier,
  \emph{Introduction to operator space theory},
   London Mathematical Society Lecture Note Series, 294. Cambridge University Press, Cambridge, 2003. viii+478 pp.


\bibitem[Pop02]{PopaJOT}
	S. Popa,
 	\emph{Orthogonal pairs of $\ast$-subalgebras in finite von Neumann algebras}, 
 	 J. Operator Theory {\bf 9} (1983), no. 2, 253--268. 



\bibitem[PoVa14]{PopaVaesFree}
  S. Popa, S. Vaes,
  \emph{Unique Cartan decomposition for II$_1$ factors arising from arbitrary actions of free groups},
   Acta Math. {\bf 212} (2014), no. 1, 141--198.

\bibitem[PoVa14]{PopaVaesCrelle}
  S. Popa, S. Vaes,
  \emph{Unique Cartan decomposition for II$_1$ factors arising from arbitrary actions of hyperbolic groups},
  J. Reine Angew. Math.  (Crelle's Journal) {\bf 694} (2014), 215--239.


\bibitem[Rec15]{Reckwerdt}
 E. Reckwerdt,
 \emph{Weak amenability is stable under graph products},
  arXiv:1511.04645.


\bibitem[RiXu06]{RicardXu}
  E. Ricard, Q. Xu,
  \emph{Khintchine type inequalities for reduced free products and applications},
  J. Reine Angew. Math. {\bf 599} (2006), 27--59.

\bibitem[StZs75]{StratilaZsido}
  S. Stratila, L. Zsido,
  \emph{Lectures on von Neumann algebras},
   Revision of the 1975 original. Translated from the Romanian by Silviu Teleman. Editura Academiei, Bucharest; Abacus Press, Tunbridge Wells, 1979. 478 pp.

\bibitem[Tak79]{Takesaki1}
  M. Takesaki,
  \emph{Theory of operator algebras. I.},
   Reprint of the first (1979) edition. Encyclopaedia of Mathematical Sciences, 124. Operator Algebras and Non-commutative Geometry, 5. Springer-Verlag, Berlin, 2002. xx+415 pp.

\bibitem[Tak03]{Takesaki2}
  M. Takesaki,
  \emph{Theory of operator algebras. II.},
   Encyclopaedia of Mathematical Sciences, 125. Operator Algebras and Non-commutative Geometry, 6. Springer-Verlag, Berlin, 2003. xxii+518 pp.
   
   \bibitem[Ued11]{Ueda}
   Y. Ueda,  
   \emph{Factoriality, type classification and fullness for free product von Neumann algebras}, 
   Adv. Math. {\bf 228} (2011), no. 5, 2647--2671.    
   

\bibitem[Vae01]{VaesJFA}
  S. Vaes,
  \emph{The unitary implementation of a locally compact quantum group action},
  J. Funct. Anal. {\bf 180} (2001), 426--480.

 \bibitem[Vae14]{VaesPrims}
 S. Vaes,
 \emph{Normalizers inside amalgamated free product von Neumann algebras},
  Publ. Res. Inst. Math. Sci. {\bf 50} (2014),   695--721.

\bibitem[Voi96]{Voiculescu}
  D. Voiculescu,
  \emph{The analogues of entropy and of Fisher's information measure in free probability theory. III. The absence of Cartan subalgebras},
  Geom. Funct. Anal. {\bf 6} (1996), 172--199.

\bibitem[Was77]{Wasserman}
   S. Wassermann,
   \emph{Injective W$^\ast$-algebras},
    Math. Proc. Camb. Phil. Soc. {\bf 82}, 39--47 (1977).


\end{thebibliography}
\end{document}